\useunder{\uline}{\ul}{}
\newcommand{\hvol}{{\widehat{\rm vol}}}
\newtheorem{thm}{Theorem}[section]
\newtheorem{defi}[thm]{Definition}
\newtheorem{lem}[thm]{Lemma}
\newtheorem{cor}[thm]{Corollary}
\newtheorem{prop}[thm]{Proposition}
\newtheorem{conj}[thm]{Conjecture}
\newtheorem{ques}[thm]{Question}
\newtheorem{fact}{Fact}
\theoremstyle{definition}
\newtheorem{rmk}[thm]{Remark}
\newtheorem{ex}[thm]{Example}
\def\ord{{\rm ord}}
\def\min{{\rm min}}
\def\max{{\rm max}}
\def\inf{{\rm inf}}
\def\sup{{\rm sup}}
\def\lim{{\rm lim}}
\def\limsup{{\rm lim\,sup}}
\def\dif{{\rm d}}
\def\bl{{\rm Bl}}
\def\spec{{\rm Spec}}
\def\log{{\rm log}}
\def\vol{{\rm vol}}
\def\Val{{\rm Val}}
\def\ord{{\rm ord}}
\def\coeff{{\rm coeff}}
\def\val{{\rm Val}}
\def\bc{{\rm Bc}}
\def\sym{{\rm Sym}}
\def\ext{{\rm Ext}}
\def\BP{\mathbf{P}}
\def\Bv{\mathbf{v}}
\def\tX{\tilde{X}}
\def\tY{\tilde{Y}}
\def\tC{\tilde{C}}
\def\hX{\hat{X}}
\def\bz{{\mathbf{z}}}
\newcommand{\IC}{{\mathbb C}}
\newcommand{\IN}{{\mathbb N}}
\newcommand{\IP}{{\mathbb P}} 
\newcommand{\IQ}{{\mathbb Q}} 
\newcommand{\IR}{{\mathbb R}}
\newcommand{\IZ}{{\mathbb Z}}
\newcommand{\CC}{{\mathcal C}}
\newcommand{\CE}{{\mathcal E}}
\newcommand{\CI}{{\mathcal I}}
\newcommand{\CK}{{\mathcal K}}
\newcommand{\CL}{{\mathcal L}}
\newcommand{\CM}{{\mathcal M}} 
\newcommand{\CO}{{\mathcal O}}
\newcommand{\CU}{{\mathcal U}}
\newcommand{\CX}{{\mathcal X}}
\newcommand{\lee}{\leqslant}
\newcommand{\gee}{\geqslant}
\lstdefinelanguage{Mathematica}{
  morekeywords={Sum, E},
  sensitive=true,
  morecomment=[l](*),
  morestring=[b]"
}
\tiny\color{gray},
\title{On the volume of K-semistable Fano manifolds}
\author{Chi Li}
\address{Chi Li: Department of Mathematics, Rutgers University, Piscataway, NJ 08854-8019, US}
\email{chi.li@rutgers.edu}
\author{Minghao Miao}
\address{Minghao Miao: School of Mathematics, Nanjing University, Nanjing 210093, China}
\email{minghao.miao@smail.nju.edu.cn}
\date{\today}
\begin{document}

\begin{abstract}

We prove that the anti-canonical volume of an $n$-dimensional K-semistable Fano manifold $X$ that is not $\IP^n$ 
is at most $2n^n$. Moreover, the volume is equal to $2n^n$ if and only if $X\cong \IP^1\times \IP^{n-1}$ or $X$ is a smooth quadric hypersurface $Q\subset \IP^{n+1}$. Our proof is based on a new connection between K-semistability and minimal rational curves. 


\end{abstract}

\maketitle
\tableofcontents

\section{Introduction}

The study of K-(poly)stable Fano varieties is a very active research area. Thanks to the resolution of the Yau-Tian-Donaldson conjecture, we know that a Fano variety admits a K\"{a}hler-Einstein metric (resp. a unique K\"{a}hler-Einstein metric) if and only if it is K-polystable (resp. K-stable). A Fano variety is K-semistable if and only if it degenerates (via a special test configuration) to a K-polystable Fano variety. There are well-established valuative criterion for K-(semi)stability (\cite{Fuj19, Li17}) and powerful methods to test it (see \cite{AZ22}). 

In this paper, we study the anticanonical volume (or degree) of K-semistable Fano varieties. For a Fano variety $X$, its (anticanonical) volume $\vol(X)$ is defined to be the self-intersection number $(-K_X)^n$. It is known that the volume of an $n-$dimensional Fano manifold can exceed the volume of the complex projective space $\IP^n$, even among toric Fano manifolds (see \cite[Section 5.11]{Deb01}). 
In a major advance in the study of K-stability, Fujita \cite{Fuj18} proved that the volume of an $n$-dimensional K-semistable Fano manifold $X$ satisfies $\vol(
X)\le (n+1)^n=\vol(\IP^n)$ and the first equality holds if and only if $X\cong \IP^n$. It was later generalized to possibly singular $\IQ$-Fano varieties by Liu \cite{Liu18}. The toric Fano case was proved earlier in \cite{BB17}. Moreover, from the boundedness of K-semistable $\IQ$-Fano varieties (\cite[Corollary 1.2]{Jia17}), we know that the set of volumes of $n$-dimensional K-semistable $\IQ$-Fano varieties is finite away from zero. In this paper, we 
solve the conjecture on characterizing the second-largest volume for K-semistable Fano manifolds,
\begin{thm}\label{conj:2ndVol}\label{thm-main}
Any K-semistable Fano manifold $X$ that is not $\IP^n$ satisfies $(-K_X)^n\le 2n^n$ and the equality holds only if $X\cong \IP^1\times \IP^{n-1}$ or $X$ is a smooth quadric hypersurface $Q\subset \IP^{n+1}$. In particular, this holds for any K\"ahler manifold admitting K\"{a}hler-Einstein metric with positive Ricci curvature. 
\end{thm}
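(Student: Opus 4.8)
The plan is to reduce the theorem to the general statement announced in the abstract, namely that a K-semistable Fano manifold $X$ of dimension $n$ carrying a minimal rational curve of degree $d$ satisfies $\vol(X)\le\vol(\IP^{d-1}\times\IP^{n-d+1})$, and then to analyze when equality can hold. So the first step is to recall the theory of minimal rational curves: every uniruled (in particular, every Fano) manifold $X$ carries a minimal dominating family of rational curves, and attached to a general point $x\in X$ and a general member $C$ through it there is the \emph{anticanonical degree} $d:=-K_X\cdot C\ge 2$, with $d=n+1$ forcing $X\cong\IP^n$ by Cho--Miyaoka--Shepherd-Barron / Kebekus. Since we assume $X\not\cong\IP^n$, we have $2\le d\le n$. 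The key new input, which I would establish first, is a K-semistability inequality relating $\vol(X)$ to $d$: one constructs from a minimal rational curve (or rather from the family sweeping out $X$, e.g. via the deformations of $C$ and the evaluation/tangent map at a general point) an explicit filtration or test configuration — concretely, one takes the divisorial or linear-system data coming from the cone of curves / the VMRT and feeds it into Fujita's valuative criterion $\beta(E)\ge 0$ (equivalently $A_X(E)\cdot\vol(X)\ge (\text{expected vanishing})$). The upshot should be that the volume is controlled by the volume of the ``model'' $\IP^{d-1}\times\IP^{n-d+1}$, whose anticanonical volume is $\binom{n}{d-1}d^{d-1}(n-d+2)^{n-d+1}$; one then checks elementarily that over $2\le d\le n$ this quantity is maximized at $d=2$, giving $2n^n=\vol(\IP^1\times\IP^{n-1})$, and note $\vol(Q^n)=2n^n$ as well.

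The second block of the argument is the equality analysis. Suppose $\vol(X)=2n^n$. By the monotonicity just described, this forces $d=2$ (the degree of a minimal rational curve must be exactly $2$, since $d\ge 3$ gives a strictly smaller bound — here one must be slightly careful because $\binom{n}{d-1}d^{d-1}(n-d+2)^{n-d+1}$ at $d=n$ could coincide numerically for small $n$, so I would treat low-dimensional cases by hand or sharpen the inequality). With $d=2$, minimal rational curves are conics (or lines in the embedding by $-K_X$ up to scaling), and a general point lies on many such; moreover equality in the K-semistability inequality should force the test configuration/filtration to be ``balanced'', which pins down the geometry of the VMRT at a general point: either the VMRT is a single reduced point (the conic-connected case leading to $X\cong\IP^1\times\IP^{n-1}$, via the structure theory of conic-connected manifolds, cf. Ionescu--Russo) or it is a quadric hypersurface in $\IP(T_xX)$ (forcing $X\cong Q^n$, by the Hwang--Mok recognition theorem for quadrics). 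I would invoke the classification of Fano manifolds of \emph{coindex} $\le 3$ / conic-connected manifolds with large pseudoindex to convert the VMRT information into the isomorphism type.

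A parallel route for the equality case, possibly cleaner, is to combine $\vol(X)=2n^n$ with the boundedness/rigidity already in the literature: Liu--Xu-type results show that near-maximal volume forces $X$ to degenerate to one of the extremal models, and since $\IP^1\times\IP^{n-1}$ and $Q^n$ are both K-polystable and rigid (no nearby K-semistable degenerations of the same volume other than themselves), a K-semistable $X$ achieving the bound must already be isomorphic to one of them; the ``second Fujita gap'' heuristic $\vol(X)\le 2n^n$ unless $X=\IP^n$ then closes the loop. The last sentence of the theorem is immediate: a K\"ahler--Einstein Fano manifold with positive Ricci curvature is K-polystable, hence K-semistable, so the bound applies verbatim.

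The main obstacle, as I see it, is \textbf{establishing the sharp K-semistability inequality $\vol(X)\le\vol(\IP^{d-1}\times\IP^{n-d+1})$ from a minimal rational curve}: translating the curve-geometric datum (the family of minimal rational curves through a general point and its VMRT) into a \emph{divisorial valuation or filtration whose $\beta$-invariant or $\delta$-type estimate yields exactly the right numerical bound} is the crux, and getting the constant sharp — rather than merely $O(n^n)$ — will require choosing the test object very carefully (presumably modeled on the $\IP^{d-1}\times\IP^{n-d+1}$ structure: the valuation should be the one whose associated degeneration is the normal cone to a fiber, or the weighted blow-up dictated by the splitting type of $T_X|_C\cong\CO(2)\oplus\CO(1)^{\oplus(d-2)}\oplus\CO^{\oplus(n-d+1)}$). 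The equality analysis is then comparatively soft, modulo correctly citing the recognition theorems for $\IP^n$-factors and quadrics via VMRT.
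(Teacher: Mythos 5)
The overall skeleton — reduce to $\vol(X)\le\vol(\IP^{d-1}\times\IP^{n-d+1})$ where $d$ is the degree of a minimal rational curve, then maximize the right-hand side over $d$ — matches the paper, and your guess that the valuation should be a weighted blow-up along the minimal rational curve ``dictated by the splitting type of $T_X|_C$'' is exactly the paper's construction (weights $1$ on the $\CO(1)$-directions and $\ell$ on the $\CO$-directions, then $\ell\to\infty$). So the first block is essentially aligned, even if you only gesture at the key computation.

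The equality analysis, however, has a genuine gap. You write that equality forces $d=2$ and hedge that $d=n$ ``could coincide numerically for small $n$.'' In fact, by the symmetry $\vol(\IP^{d-1}\times\IP^{n-d+1})=\vol(\IP^{n-d+1}\times\IP^{d-1})$, the value $2n^n$ is attained at \emph{both} $d=2$ and $d=n$ for \emph{every} $n$, not just small ones; this is precisely why there are two manifolds with second-largest volume. Moreover $Q^n$ has minimal rational curves of degree $n$, not $2$, so your plan of extracting $Q^n$ from the $d=2$ case via its VMRT (a point vs. a quadric in $\IP(T_xX)$) is misdirected — for $d=2$ the VMRT is $0$-dimensional and $Q^n$ never appears. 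The paper instead handles $d=n$ (equivalently $l_X=n$) by invoking the classification of Fano manifolds with a dominating unsplit family of degree $n$ (Miyaoka, Cho--Dedieu, Dedieu--Höring): $X\cong Q^n$ or $X$ is a blow-up of $\IP^n$ along a subvariety $Y$ in a hyperplane, and then rules out the latter case-by-case (Lemma~\ref{lem:CDvolume} shows $\vol<2n^n$ when $\deg Y\ge2$, Lemma~\ref{lem-CDunstable} shows K-instability when $\deg Y=1$). For $d=2$ the paper does not use VMRT recognition theorems at all but rather the trivial-normal-bundle argument (Theorem~\ref{thm-P1trivial}, driven by a Seshadri-constant rigidity statement, Proposition~\ref{prop-LZ}). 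Without filling in the $d=n$ branch — which is where $Q^n$ actually lives — your equality argument would only recover $\IP^1\times\IP^{n-1}$ and would not be a proof of the stated theorem.
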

Like the result of Fujita (\cite{Fuj18}), the last statement could be seen as a result in K\"{a}hler geometry. However, our proof uses essentially algebro-geometric methods. The above result seems to be first conjectured in \cite[Problem 2.6]{AIM}, and is stated as a folklore conjecture in \cite[Conjecture 6.8]{Zhuang25} which highlights the interesting (but also mysterious) feature that there are two Fano manifolds with the second largest volume. Our proof will indeed give a satisfactory explanation of this feature by connecting it to the theory of minimal rational curves on Fano manifolds. 

There is a local analogue of this conjecture sometimes called the ODP conjecture (see Conjecture \ref{conj:ODP}) for the volume of klt singularity (see \cite{SS17,LX19}). 
The ODP conjecture would imply that the volume of singular K-semistable Fano varieties is strictly less than $2 n^n$ (by Fujita-Liu's local-to-global volume comparison, see Remark \ref{rem-sing}). Conversely, using Theorem \ref{thm-main}, we immediately verify the ODP conjecture for the Fano cone over a K-semistable smooth Fano manifold (see Theorem \ref{cor:ODPregular}). 


The toric version of Theorem \ref{thm-main} has its own interest and is called the ``gap hypothesis" in \cite[Conjecture 3.10]{AB24}. It implies the sharp bound of a canonical height of canonical model of toric Fano varieties over $\spec \,\IZ$ by \cite[Lemma 3.8]{AB24}. 
Moreover, in the toric case, the ODP conjecture is solved by Moraga-Süß's result (\cite{MS24}) using convex geometric methods. So combined with Theorem \ref{thm-main}, we indeed have the sharp upper bound for all possibly singular K-semistable toric Fano varieties.  
\begin{thm}[See Theorem \ref{thm:vtoric}]\label{thm-toric}
Let $X$ be an $n$-dimensional K-semistable toric Fano variety that is not $\IP^n$. Then $(-K_X)^n\le 2n^n$ and the equality holds only if $X\cong \IP^1\times \IP^{n-1}$. 
\end{thm}
We remark that Theorem \ref{thm-toric} was proved in \cite{AB24} for smooth toric Fano manifolds
of dimension $n\lee 6$ by using the classification of \cite{toricdata} and also for certain singular toric Fano varieties by convex geometric arguments (see \cite[Lemma 3.9]{AB24}).
Furthermore, using the well-known one-to-one correspondence between Gorenstein toric Fano varieties and reflexive lattice polytopes in $\IR^n$, Theorem \ref{thm-toric} immediately implies a convex geometric statement.
\begin{cor}[See Corollary \ref{cor:polytope}]\label{cor-lattice}
    Suppose $P\subseteq \IR^n$ is an $n$-dimensional reflexive lattice polytope with its barycenter at $0\in\IR^n$. Assume $P$ is not unimodularly equivalent to $(n+1)$ times a standard simplex $(n+1)\Delta_n$. Then the volume of $P$ with respect to the Lebesgue measure in $\IR^n$ satisfies $\vol_{\IR^n}(P)\lee 2n^n/n!$ and the equality holds if and only if $P$ is unimodularly equivalent to $[0,2]\times (n\Delta_{n-1})$.
\end{cor}

We will prove Theorem \ref{thm-main} by exploring a new connection between K-stability and minimal rational curves. We use the following invariant 
$$
l_X=\min\{(-K_X\cdot C)|\, C\subset X \, \text{is a minimal rational curve on}\, X\}\in \{2,3,\cdots, \dim X+1\}.
$$ 
For the case of $l_X=\dim X+1$ and $l_X=\dim X$, we can use the classification result of \cite{CMS02, Miy04,CD15,DH17} to verify Theorem \ref{thm-main}.
At the other extreme, if $l_X=2$ and when the minimal rational curve is embedded, then the minimal rational curve has a trivial normal bundle. In this special case Theorem \ref{thm-main} is a consequence of the following generalization of a result of K. Fujita (\cite{Fuj18}). 

\begin{thm}\label{thm-ntrivial}
    Let $X$ be an $n$-dimensional K-semistable Fano manifold, and $Z\subset X$ be a codimension $r$ smooth complex submanifold. Assume that the normal bundle of $Z$ inside $X$ is trivial and set $d = (-K_X)^{n-r}\cdot Z$. Then we have the inequality 
    \begin{equation}\label{eq-volprod}
    (-K_X)^n \lee (r+1)^r\cdot \binom{n}{r}\cdot d=(-K_{\IP^r\times Z})^n.
    \end{equation}
    Moreover, the equality holds if and only if $X$ is biholomorphic to $\IP^r\times Z$. In particular in the equality case $Z$ is also a K-semistable Fano manifold. 
\end{thm}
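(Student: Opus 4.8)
The plan is to test K-semistability of $X$ against the divisorial valuation $E = \mathrm{ord}_E$ obtained by blowing up $X$ along the smooth codimension-$r$ center $Z$. Since the normal bundle $N_{Z/X}$ is trivial, the blowup $\pi\colon \mathrm{Bl}_Z X \to X$ has exceptional divisor $E \cong Z \times \IP^{r-1}$ with $\CO_E(E) \cong \CO_{\IP^{r-1}}(-1)$ on each $\IP^{r-1}$-fiber, and $K_{\mathrm{Bl}_Z X} = \pi^* K_X + (r-1)E$. I will compute the two quantities appearing in Fujita's valuative criterion: the log discrepancy $A_X(E) = r$, and the expected vanishing order $S(E) = \frac{1}{(-K_X)^n}\int_0^\infty \mathrm{vol}(\pi^*(-K_X) - tE)\,dt$. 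To bound $\mathrm{vol}(\pi^*(-K_X)-tE)$ from below I would use a Minkowski-type / positivity argument on $\mathrm{Bl}_Z X$: intersecting $(\pi^*(-K_X)-tE)^n$ and expanding in $t$, the negative contributions involve powers $E^k \cdot \pi^*(-K_X)^{n-k}$, which by the projection formula and $\CO_E(E)=\CO(-1)$ reduce to intersection numbers on $E = Z\times\IP^{r-1}$, ultimately expressible through $d = (-K_X)^{n-r}\cdot Z$ and the nef threshold $t \le 1$ beyond which positivity can fail. This should yield, after the integral, an inequality of the form $S(E) \ge (\text{explicit constant depending on } n, r)$, and the constant will be exactly calibrated so that $A_X(E) \ge S(E)$ is equivalent to \eqref{eq-volprod}; the model computation is that on $\IP^r \times Z$ the same valuation gives equality.

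Concretely, for the actual lower bound on the volume function I expect to need more than a crude expansion — on $\mathrm{Bl}_Z X$ the class $\pi^*(-K_X) - tE$ need not be nef for $t>0$, so its volume is not simply its top self-intersection. The cleanest route is to produce an explicit nef (or movable) sub-class: since $-K_X$ is ample and the fibers of $E \to Z$ are $\IP^{r-1}$'s contracted by $\pi$, for small $t$ the class stays nef, and I would run a direct argument (e.g. restricting to a general complete-intersection surface cut out by members of $|-K_X|$, reducing to a surface computation where volumes are computed by Zariski decomposition) to get the bound $\mathrm{vol}(\pi^*(-K_X)-tE) \ge (-K_X)^n(1-t)^{?}\cdots$ with the correct shape. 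Alternatively, and perhaps more robustly, I would test K-semistability against the family of quasi-monomial valuations interpolating between $\mathrm{ord}_E$ and valuations adapted to the $\IP^{r-1}$-fiber structure, mimicking the weighted-blowup philosophy flagged in the introduction and letting a weight tend to a boundary value to sharpen the estimate.

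The key steps, in order: (i) construct the blowup and record $A_X(E)=r$, the formula for $E^j\cdot \pi^*(-K_X)^{n-j}$ in terms of Segre/Chern classes of the trivial bundle $N_{Z/X}$, hence in terms of $d$; (ii) establish the volume lower bound for $\pi^*(-K_X) - tE$ on $t\in[0,1]$ via a positivity/Zariski-decomposition argument; (iii) integrate to get $S(E)$ and invoke $\beta(E) = A_X(E) - S(E) \ge 0$ to deduce \eqref{eq-volprod}; (iv) for the equality case, note $\beta(E)=0$ forces $E$ to be a minimizer of the normalized volume / to induce a special test configuration with K-semistable central fiber, and then identify $X$ with $\IP^r \times Z$ — either by showing the associated degeneration is already trivial (the graded ring $\bigoplus_m \bigoplus_j H^0(X, -mK_X)$ filtered by $\mathrm{ord}_E$ has the Rees algebra of $\IP^r\times Z$), or by a direct birational geometry argument that the extremal contraction $\mathrm{Bl}_Z X \to X$ together with the $\IP^{r-1}$-bundle structure forces a product. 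The main obstacle I anticipate is step (ii): getting the sharp lower bound on the volume function without nefness, and in particular making sure the constant is exactly the one that produces the clean product bound rather than something weaker; the equality analysis in step (iv), showing the degeneration is trivial rather than merely to some K-semistable Fano, is the secondary difficulty and will likely use that $Z$ has trivial normal bundle together with rigidity of $\IP^r$.
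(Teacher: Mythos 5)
Your overall framework is correct and matches the paper's: blow up along $Z$, record $A_X(E)=r$, bound $S(E)$ via a lower bound on $\vol(\pi^*(-K_X)-tE)$, integrate, and apply $\beta(E)\geq 0$; then treat the equality case as a rigidity problem. But the two places you flag as ``the main obstacle'' and ``the secondary difficulty'' are precisely where the proof actually lives, and your proposed routes to fill them are not the ones that work.

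For step (ii), you anticipate needing Zariski decompositions on surface sections or quasi-monomial interpolation, and you correctly observe that $\pi^*(-K_X)-tE$ is generally not nef so that volume is not self-intersection. The paper's resolution is much more elementary and is where the triviality of $N_{Z/X}$ is actually used: one writes $\vol(\pi^*L-xE) \geq L^n - \limsup_k h^0(X, kL\otimes \CO_{xkZ})/(k^n/n!)$ and then bounds $h^0(X, kL\otimes \CO_{xkZ})$ by filtering $\CO_X/\CI_Z^{xk}$ by the graded pieces $\CI_Z^j/\CI_Z^{j+1}\cong \sym^j(N_{Z/X}^\vee)=\CO_Z^{\oplus\binom{r-1+j}{r-1}}$. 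Kodaira vanishing kills the $H^1$ of each piece (this uses that the pieces are copies of $\CO_Z$ twisted by $-kK_Z$, which is only true because the normal bundle is trivial), so the $h^0$'s telescope exactly, giving $\vol(\pi^*L-xE)\geq L^n - d\binom{n}{r}x^r$ \emph{for all} $x\geq 0$. Crucially this lower bound is a polynomial in $x$ of degree $r<n$ rather than $n$, which is what calibrates the constant to the product $\IP^r\times Z$. Your intersection-theoretic or Zariski-decomposition routes would struggle to produce a bound valid past the nef threshold, and restricting to surface sections would lose the clean dependence on $r$ and $d$.

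For step (iv), you propose either a Rees-algebra/degeneration argument or a ``direct birational geometry argument that the extremal contraction $\mathrm{Bl}_Z X\to X$ forces a product.'' Neither is what the paper does, and the second as stated would not work since $\mathrm{Bl}_Z X\to X$ is always a blowdown regardless of whether $X$ is a product. The paper's route is: equality in the volume bound forces $\vol(\pi^*(-K_X)-xE)$ to agree with the top self-intersection on $[0,r+1]$, which via Lemma \ref{lem-seshadri} (a de Fernex--K\"uronya--Lazarsfeld argument controlling higher cohomology) upgrades to $\epsilon(-K_X,Z)=r+1>r$; then the genuinely new ingredient Proposition \ref{prop-LZ}, a gap theorem in the spirit of Liu--Zhuang, says that trivial normal bundle plus $\epsilon(-K_X,Z)>r$ forces $X\cong\IP^r\times Z$. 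That proposition constructs a \emph{different} contraction $\Phi:\mathrm{Bl}_Z X\to Y$ from the semiample class $\pi^*(-K_X)-\epsilon E$ (basepoint-free theorem), shows $\Phi|_E$ is an isomorphism onto $Y\cong Z\times\IP^{r-1}$ via Kodaira vanishing, and identifies $\mathrm{Bl}_Z X$ as a projectivized rank-2 bundle that splits. Your degeneration alternative is conceptually plausible but is not developed, and would need substantial work to show the central fiber is $\IP^r\times Z$ rather than some other K-semistable Fano and that the degeneration is a product. So as written the proposal is a correct plan with the two key technical steps unresolved.
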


To estimate the volume in much more involved cases when $3\leq d=l_X\leq n-1$, we are going to use the valuative criterion to test K-semistability via  special valuations centered at minimal rational curves. 
While the proof of Theorem \ref{thm-ntrivial} uses ordinary blowup of submanifolds, the proof of Theorem \ref{thm-main} needs weighted blowups (see Remark \ref{rmk-12}). 
In particular, we will define and analyze a particular weighted blowup of type $({1}^{\oplus (d-2)}, 2^{\oplus (n-d+1)})$ that is adapted to the splitting of normal bundle over $\IP^1$ (see \eqref{eq-split}). 
The analysis will also allow us to overcome the difficulty that the minimal rational curve may have singularities.  



We sketch the organization of this paper. In the next section, we recall basic knowledge about some key concepts used in later sections: K-stability, Seshadri constant, and minimal rational curves. In section \ref{sec-ntrivial}, we prove Theorem \ref{thm-ntrivial} which implies Theorem \ref{thm-main} in the special case when the Fano manifold contains an embedded minimal rational curve with a trivial normal bundle. We also deduce Theorem \ref{thm-toric} and Corollary \ref{cor-lattice}. In section \ref{sec-main}, we give the full proof of Theorem \ref{thm-main}. We end the paper by giving some related examples and finding (K-semistable) Fano manifolds with minimal volumes. 

\subsection*{Acknowledgments}
C. Li is partially supported by NSF (Grant No. DMS-2305296). M. Miao would like to thank his advisor Gang Tian for his constant support, encouragement, and guidance. M. Miao would like to thank Jiyuan Han, Xiaowei Jiang, Yang Liu, Yalong Shi, and Linsheng Wang for helpful discussions. We thank Robert Berman, Kento Fujita, Yuchen Liu,  Chenyang Xu and Ziquan Zhuang for their interest and helpful comments, and Kento Fujita for pointing to us the Fact \ref{fact-vol}. We are especially grateful to Ziquan Zhuang for kindly pointing out some error in our argument that appeared in the first version of this preprint (see Remark \ref{rmk-Zhuang}) and very helpful suggestions. 
This work is carried out during M. Miao's visit to Rutgers University. He would like to thank the graduate school of Nanjing University for providing funding support and thank the Department of Mathematics at Rutgers University for its hospitality. We are grateful to an anonymous referee for careful reading and valuable suggestions that greatly improve our paper.




\section{Preliminaries}
We will work over $\IC$. Unless otherwise specified, all varieties are assumed to be normal and projective. A $\IQ$-Fano variety $X$ is a variety with at worst klt singularities such that the anti-canonical divisor $-K_X$ is an ample $\IQ$-Cartier divisor. A singularity $x\in X$ consists of a variety $X$ and a closed point $x \in X$. A singularity $x \in X$ is called klt if $X$ is klt near a neighborhood of $x$. A $\IR$-valuation over singularity $x$ is a valuation $v\colon K(X)^*\rightarrow \IR$ centered at $x$ (namely, for all $f \in \CO_{X,x}$, we have $v(f)\gee 0$  and $v(f)>0$ if and only if $f \in \mathfrak{m}_x$) and $v|_{\IC^*}=0$. The set of all such valuations is denoted by $\val_{X,x}$. 
\subsection{K-stability}
K-stability, first introduced by Tian (\cite{Tia97}) and later reformulated algebraically by Donaldson (\cite{Don02}), is an algebro-geometric notion to characterize the existence of K\"ahler-Einstein metrics on Fano varieties. In this subsection, we recall some notions in K-stability theory that are relevant to our paper, and refer to \cite{Xu23} for a detailed exposition of K-stability theory. 

We say that a prime divisor $E$ is over $X$ if there exists a proper birational morphism $\mu\colon Y\rightarrow X$ such that $Y$ is normal and $E$ is a prime divisor on $Y$. We define the log discrepancy of the divisor $E$ over $X$ as 
$$
A(E)=A_{X}(E)\coloneqq 1+\coeff_{E}(K_{Y}-\mu^*K_X).
$$ 
The volume of an $\IR$-Cartier divisor $D$ is defined as $\vol_X(D)\coloneqq \mathop{\limsup}_{m\rightarrow +\infty}\frac{h^0(X,\CO_X(\lfloor mD\rfloor))}{m^n/n!}$. Note that the limsup is actually a limit and the volume function is continuous in the big cone $\text{Big}(X)$ (\cite[Section 2.2.C]{Lazar04}). Define the $S$-invariant 
$$S(E):=S(-K_X;E)\coloneqq \frac{1}{(-K_X)^n}\int_0^{\infty}\vol(-K_X-xE)\, \dif x, $$
where, for the simplicity of notation, we just write $\vol(-K_X-xE)$ for $\vol(\mu^*(-K_X)-xE)$. 
We have the following valuative criterion for K-semistability,
\begin{thm}{\cite{Fuj19,Li17}} \label{thm:beta}
    A Fano variety $X$ is K-semistable if and only if 
    $A(E)-S(E)\gee 0$ for every divisor $E$ over $X$. 
\end{thm}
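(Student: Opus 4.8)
The plan is to reconstruct the argument of Fujita~\cite{Fuj16} and Li~\cite{Li17}, whose two halves run as follows. First one rephrases K-semistability of a $\IQ$-Fano variety $X$ in terms of the non-Archimedean Ding functional: for a normal ample test configuration $(\CX,\CL)$ of $(X,-K_X)$ one has Berman's inequality $\mathrm{DF}(\CX,\CL)\gee \mathrm{Ding}(\CX,\CL)$, with equality when the central fibre is again a $\IQ$-Fano pair; combined with the reduction of K-stability to \emph{special} test configurations, this gives that $X$ is K-semistable if and only if $\mathrm{Ding}^{\na}(\CX,\CL)\gee 0$ for every normal ample test configuration. Since test configurations are exactly the finitely generated multiplicative $\IZ$-filtrations of the anticanonical ring $R=\bigoplus_{m\gee 0}H^0(X,-mK_X)$, and every linearly bounded multiplicative filtration $\CF$ of $R$ is approximated in $\mathrm{Ding}^{\na}$ by its finite-level truncations, K-semistability of $X$ becomes equivalent to $\mathrm{Ding}^{\na}(\CF)\gee 0$ for \emph{every} such filtration. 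It then remains to show that this holds if and only if $A(E)\gee S(E)$ for all prime divisors $E$ over $X$.

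For the ``only if'' direction one evaluates $\mathrm{Ding}^{\na}$ on the valuation filtration $\CF_E$ defined by $\CF_E^{\,t}R_m=\{s\in R_m:\ord_E(s)\gee t\}$. Writing $\mathrm{Ding}^{\na}(\CF)=L^{\na}(\CF)-E^{\na}(\CF)$ as the difference of the log term and the energy term (the barycentre of the Duistermaat--Heckman measure of $\CF$, i.e.\ of the weak limit of the normalized jumping numbers of $\CF$ on $R_m$), one computes that the Duistermaat--Heckman measure of $\CF_E$ is $-\tfrac{d}{dt}\bigl(\vol(-K_X-tE)/(-K_X)^n\bigr)\,dt$, whose barycentre equals $S(E)$ by integration by parts, and that $L^{\na}(\CF_E)$ is the log canonical threshold of the graded sequence of base ideals of $\CF_E$, which coincides with that of the valuation ideals $\{f:\ord_E(f)\gee m\}_m$ and hence equals $A(E)$. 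Thus $\mathrm{Ding}^{\na}(\CF_E)=A(E)-S(E)$, and K-semistability forces $A(E)-S(E)\gee 0$.

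For the ``if'' direction one must bound $\mathrm{Ding}^{\na}(\CF)$ below for an arbitrary filtration $\CF$. Here the two facts to establish are the valuative formula for the log term, $L^{\na}(\CF)=\inf_{v\in\val_X}\bigl(A_X(v)+v(\CF)\bigr)$ — where $v(\CF)=\lim_m\tfrac1m v\bigl(I_m(\CF)\bigr)$ for the graded sequence of base ideals $I_\bullet(\CF)$ — and the energy estimate $E^{\na}(\CF)\lee v(\CF)+S(-K_X;v)$, valid for every valuation $v$. Granting these, if $v_0$ (nearly) attains the infimum defining $L^{\na}(\CF)$ then
\[
\mathrm{Ding}^{\na}(\CF)=A_X(v_0)+v_0(\CF)-E^{\na}(\CF)\ \gee\ A_X(v_0)-S(-K_X;v_0)\ \gee\ \inf_{E}\bigl(A(E)-S(E)\bigr)\ \gee\ 0,
\]
the penultimate step because divisorial valuations are dense in $\val_X$ and $A_X$ and $S(-K_X;\cdot)$ are continuous (and homogeneous of degree one) there; with the first paragraph this shows $X$ is K-semistable.

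The main obstacle is precisely the ``if'' direction, i.e.\ comparing the non-Archimedean functionals of an arbitrary filtration with those of divisorial valuations. The valuative formula for $L^{\na}$ rests on the theory of log canonical thresholds of graded sequences of ideals together with approximation of $\CF$ at finite level, while the key energy inequality $E^{\na}(\CF)\lee v(\CF)+S(-K_X;v)$ is deduced from the concavity of the volume function and a Newton--Okounkov-body analysis of the sub-linear series cut out simultaneously by $\CF$ and by $v$. The ingredients of the first paragraph — Berman's inequality $\mathrm{DF}\gee\mathrm{Ding}$, the Li--Xu reduction to special test configurations, and continuity of $\mathrm{Ding}^{\na}$ under truncation — are by now standard; see \cite{Fuj16,Li17} and the exposition in \cite{Xu23}.
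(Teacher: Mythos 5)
The paper does not prove this theorem: it is quoted as a known result of Fujita and Li, so there is no in-paper argument to compare against. Judged against the literature, your reconstruction is essentially the now-standard proof (Fujita's argument as systematized by Blum--Jonsson and in Xu's book): Berman's inequality plus the Li--Xu reduction to special test configurations to pass to the non-Archimedean Ding functional, the computation $\mathrm{Ding}^{\na}(\CF_E)=A(E)-S(E)$ for the ``only if'' direction, and the pair of lemmas $L^{\na}(\CF)=\inf_v(A_X(v)+v(\CF))$ and $E^{\na}(\CF)\lee v(\CF)+S(v)$ for the ``if'' direction. Two remarks. First, for ``only if'' you only need the inequality $L^{\na}(\CF_E)\lee A(E)$, which is immediate from testing the log canonical threshold against (the Gauss extension of) $\ord_E$ itself; the equality you assert, i.e.\ that $\ord_E$ computes the lct of its own graded sequence of valuation ideals, is true but is a nontrivial Jonsson--Mustață-type statement that the proof does not require. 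Second, in the last step $A_X$ is only lower semicontinuous on $\val_X$, not continuous, so ``density of divisorial valuations'' has to be implemented via the quasi-monomial approximation/retraction argument of Blum--Jonsson rather than a naive limit; equivalently, one notes that $A(E)\gee S(E)$ for all divisors $E$ forces $A(v)\gee S(v)$ for all valuations of finite log discrepancy. Finally, as a historical point, the proof in \cite{Li17} actually proceeds through normalized volumes and equivariant volume minimization on the cone over $X$, which is a genuinely different route from the filtration calculus you describe; your sketch is closer to \cite{Fuj16} and its later streamlinings.
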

Another equivalent way to characterize K-semistability is via the  $\delta$-invariant (also known as the stability threshold), we recall $\delta(X)\coloneqq \inf_{E/X}\frac{A(E)}{S(E)}$ where the infinum is taking over all divisors $E$ over $X$ (\cite{FO16,BJ20}). Then $X$ is K-semistable if and only if $\delta(X)\gee 1$ by the valuative criterion. 

One may define the log discrepancy $A_{X}(v)$ for any valuation $v\in \val_{X,x}$, see \cite[Section 5.1]{JM12}. For a klt singularity $x\in X$, we always have $A_{X}(v)>0$ for any nontrivial valuation $v\in\Val_{X,x}$. Denote $\val^*_{X,x}=\{v\in \val_{X,x}\mid A_{X}(v)<+\infty\}$. The volume of a valuation $v\in \val_{X,x}$ is defined as
\begin{eqnarray*}
    \vol_{X,x}(v) \coloneqq \mathop{\limsup}_{m\rightarrow +\infty} \frac{l(\CO_{X,x}/\mathfrak{a}_m(v))}{m^n/n!},
\end{eqnarray*} 
where $\mathfrak{a}_m(v)$ denotes the valuation ideals:  $\mathfrak{a}_m(v)\coloneqq \{f\in \CO_{X,x}\mid v(f)\geq m\}$. The first named author introduced the following invariant for singularity (\cite{Li18}), which plays a key role in the study of local K-stability:
\begin{defi}
    Suppose $x \in X$ is a klt singularity, for a valuation $v \in \val_{X,x}$, we define the normalized volume 
    \begin{eqnarray*}
        \hvol_{X}(v)\coloneqq 
        \left\{\begin{aligned}
            &A_{X}(v)^n\cdot \vol(v)&\text{if}\ & A_X(v)<+\infty \\
            & +\infty & \text{if}\ & A_X(v)=+\infty.
        \end{aligned}\right.
    \end{eqnarray*}
    The local volume of $x \in X$ is defined as $\hvol(x,X)\coloneqq \inf_{v\in \Val_{X,x}} \hvol_{X}(v)$.
\end{defi}
It was proved that for any $n$-dimensional klt singularity $x\in X$, we have $\hvol(x,X)\lee n^n$, with the equality holds if and only if $x\in X$ is a smooth point (\cite{LX19}). The following conjecture is known as the ODP conjecture (\cite{SS17,LX19}).
\begin{conj}\label{conj:ODP}(ODP conjecture)
    The second largest local volume of an $n$-dimensional klt singularity is $2(n-1)^n$. Moreover, the local volume is equal to $2(n-1)^n$ if and only if $x\in X$ is an ordinary double point.
\end{conj}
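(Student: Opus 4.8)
The plan is to reduce the statement to a normalized-volume bound for K-semistable Fano cone singularities, and then combine it with the global bound of Theorem~\ref{thm-main} and with Kobayashi--Ochiai-type constraints on Fano indices. By the Stable Degeneration Theorem for $\hvol$ (Li, Li--Xu, Blum, Xu--Zhuang), the infimum defining $\hvol(x,X)$ is computed by a quasi-monomial valuation $v_0$ with finitely generated graded ring, whose degeneration $X_0=\spec\,\gr_{v_0}\CO_{X,x}$, together with its Reeb field $\xi_0$, is a K-semistable Fano cone singularity with $\hvol(x,X)=\hvol(o,X_0)$. Approximating $\xi_0$ by a rational vector field one may further assume $(X_0,\xi_0)$ is quasi-regular, so that $X_0=C(V,L)$ is the affine cone over a K-polystable log Fano pair $(V,\Delta)$ of dimension $n-1$, polarized by an ample $\IQ$-Cartier divisor $L$ with $-(K_V+\Delta)=rL$ for some $r\in\IQ_{>0}$, the minimizing valuation being the canonical divisorial valuation $\ord_V$. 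If $x\in X$ is smooth then $X_0\cong\IA^n$; so one may assume $X_0$ is singular, equivalently $\hvol(x,X)<n^n$.

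A standard discrepancy computation at the blowup of the vertex gives $A_{X_0}(\ord_V)=r$ and $\vol_{X_0,o}(\ord_V)=L^{\,n-1}=\vol(V,\Delta)/r^{\,n-1}$, where $\vol(V,\Delta):=(-(K_V+\Delta))^{n-1}$, whence
\[
\hvol(x,X)=\hvol(o,X_0)=A_{X_0}(\ord_V)^n\cdot\vol_{X_0,o}(\ord_V)=r\cdot\vol(V,\Delta).
\]
So it suffices to bound $r\cdot\vol(V,\Delta)$, and two inputs enter. First, since $(V,\Delta)$ is K-semistable of dimension $n-1$, one expects $\vol(V,\Delta)\le 2(n-1)^{n-1}$ unless $(V,\Delta)\cong(\IP^{n-1},0)$, with equality characterizing $(Q^{n-1},0)$ and $(\IP^1\times\IP^{n-2},0)$ — this is Theorem~\ref{thm-main} in the (hoped-for) singular/logarithmic form. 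Second, the characterization of K-semistability of a Fano cone as $\hvol$-minimization of the Reeb valuation forces $L$ to be an integer multiple $dH$ of the primitive polarization $H$, so $r=q(V,\Delta)/d$ is bounded by the (log) Fano index $q(V,\Delta)$; and by Kobayashi--Ochiai, $q=n$ only for $(\IP^{n-1},0)$ (the smooth case, $d=1$), $q=n-1$ only for $(Q^{n-1},0)$, and $q\le n-2$ otherwise, while $q(\IP^1\times\IP^{n-2},0)=\gcd(2,n-1)\le 2$.

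Combining these: if $(V,\Delta)\cong(\IP^{n-1},0)$ and $X_0$ is singular then $d\ge 2$, so $r\le n/2$ and $\hvol=r\,n^{n-1}\le n^n/2\le 2(n-1)^n$, strictly for $n\ge 3$ (and giving the $A_1$ singularity for $n=2$). Otherwise $\vol(V,\Delta)\le 2(n-1)^{n-1}$ and $r\le q\le n-1$, so $\hvol=r\cdot\vol(V,\Delta)\le (n-1)\cdot 2(n-1)^{n-1}=2(n-1)^n$, with equality forcing $r=q=n-1$ and $\vol(V,\Delta)=2(n-1)^{n-1}$, hence $(V,\Delta)\cong(Q^{n-1},0)$ and $d=1$, i.e. $L=\CO_Q(1)$ and $X_0$ is the $n$-dimensional ordinary double point; the other extremal base $\IP^1\times\IP^{n-2}$ is ruled out because its index $\gcd(2,n-1)$ is $<n-1$ for $n\ge 4$ (and it coincides with $Q^2$ when $n=3$). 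Thus $\hvol(x,X)\le 2(n-1)^n$ with equality only when $X_0$ is an ODP, and a rigidity statement for the minimizing degeneration would then upgrade this to $x\in X$ being an ODP. In particular this would explain why, unlike the global Theorem~\ref{thm-main}, the local conjecture has a \emph{unique} extremizer: the product $\IP^1\times\IP^{n-2}$ simply has too small a Fano index to sit at the base of an extremal cone.

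The main obstacle is that the singular/logarithmic form of Theorem~\ref{thm-main} invoked above is exactly what is \emph{not} proved here: Theorem~\ref{thm-main} as established applies only to smooth Fano manifolds, and by Liu's local-to-global comparison (Remark~\ref{rem-sing}) extending it to $\IQ$-Fano pairs is essentially equivalent to the ODP conjecture itself, so this route is circular in general. It does, however, go through verbatim when the minimizing degeneration of $x\in X$ is a cone over a \emph{smooth} K-semistable Fano manifold, which is precisely the content of Theorem~\ref{cor:ODPregular}. Two further points need care: making precise, in the K-semistable cone setting, that the polarization is an integer multiple of the primitive one (equivalently, ruling out fractional $r$ close to $n-1$), which should follow from the $\hvol$-minimization characterization of K-semistable Fano cones; and treating the irregular (higher rational rank Reeb) case directly, where $V$ is no longer a projective variety but one still has a K-semistable object carrying a torus action for which a suitable volume bound is required.
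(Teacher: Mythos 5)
The statement you are trying to prove is stated in the paper only as a \emph{conjecture} (Conjecture~\ref{conj:ODP}); the paper does not prove it, and your proposal does not close the gap either --- as you yourself correctly diagnose. What the paper does prove is exactly the special case you identify at the end: Theorem~\ref{cor:ODPregular}, the ODP conjecture for affine cones over smooth K-semistable Fano manifolds. Your computation $\hvol(o,X_0)=A(\ord_V)^n\cdot\vol(\ord_V)=r\cdot\vol(V)$, the use of Kobayashi--Ochiai to bound the index, the application of Theorem~\ref{thm-main} to bound $\vol(V)$, and the observation that $\IP^1\times\IP^{n-2}$ is excluded from the equality case because its Fano index is too small, reproduce the paper's proof of Theorem~\ref{cor:ODPregular} essentially line by line (with the dimension convention shifted by one: the paper's cone over an $n$-fold is $(n+1)$-dimensional).

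The genuine gap is the one you name: after the stable degeneration reduction, the K-semistable Fano cone $X_0$ need not be quasi-regular with \emph{smooth} base. In general $(V,\Delta)$ is a possibly singular log Fano pair, and the volume bound $\vol(V,\Delta)\le 2(n-1)^{n-1}$ for K-semistable log Fano pairs of dimension $n-1$ is not available --- indeed, by Liu's local-to-global comparison (Remark~\ref{rem-sing}) the singular version of Theorem~\ref{thm-main} in dimension $n-1$ is itself a consequence of the ODP conjecture in dimension $n-1$, so an induction along these lines is delicate and, as stated, circular. Two further points you flag also require real work and are not addressed in the paper: the Kobayashi--Ochiai index bound and its equality characterization for singular log Fano pairs, and the irregular case where the Reeb field has irrational rank and $V$ is not a projective variety (the approximation by rational Reeb fields changes $\hvol$ continuously but does not preserve the sharp constant without additional argument). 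So your proposal should be read as a correct reconstruction of the paper's Theorem~\ref{cor:ODPregular} together with an accurate account of why the full conjecture remains open, not as a proof of the conjecture.
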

For later purposes, we formulate a calculus lemma and an estimate of volume. 
Let $\mathscr{F}$ denote the collection of continuous functions $\phi: [0, \infty)\rightarrow [0, \infty)$ that is piecewise smooth, strictly increasing and surjective. In particular, such a function $\phi$ satisfies 
 \begin{equation}\label{eq-0inflim}
       \mathop{\lim}_{x\rightarrow 0^+}\phi(x)=0, \quad \mathop{\lim}_{x\rightarrow+\infty}\phi(x)=+\infty. 
    \end{equation}

For each $x\in [0, \infty)$, we will use $\phi'(x)$ to denote the right derivative at $x$:
\begin{equation*} \phi'(x)=\lim_{\epsilon\rightarrow 0^+}
\frac{\phi(x+\epsilon)-\phi(x)}{\epsilon}. 
\end{equation*}
For any $\phi\in \mathscr{F}$, its inverse function $\phi^{-1}: [0, \infty)\rightarrow [0, \infty)$ is also an element of $\mathscr{F}$.  
For any $\phi\in \mathscr{F}$, consider the following function of $V\in (0, \infty)$:
 \begin{equation*}
       F^{\phi}(V):= 
       \frac{1}{V}\int_0^{\phi^{-1}(V)}\left(V-\phi(x)\right)\dif x. 
    \end{equation*} 
\begin{lem}  \label{lem-cal}
(1) For any $\phi\in \mathscr{F}$, the function $F^\phi(V)$ is piecewise smooth, strictly increasing and satisfies the limit conditions in \eqref{eq-0inflim}. As a consequence, we can consider it as an element of $\mathscr{F}$. 

(2) If $\psi\in \mathscr{F}$ satisfies $(\phi^{-1})'(V)\ge (\psi^{-1})'(V)$ for any $V\in (0, \infty)$, then $F^{\phi}(V)\ge F^{\psi}(V)$ for any $V\in (0, \infty)$. If moreover there exists $V_1>0$ and $\epsilon>0$ such that $(\phi^{-1})'(V)>(\psi^{-1})'(V)$ for $V\in (V_1, V_1+\epsilon)$, then $F^\phi(V)>F^{\psi}(V)$ for $V> V_1$. As a consequence, we get $(F^{\phi})^{-1}(A)\le (F^{\psi})^{-1}(A)$ for any $A\in (0, \infty)$ and with a strict inequality if $A>F^\psi(V_1)$. 
\end{lem}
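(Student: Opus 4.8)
The plan is to reduce both statements to elementary facts about the function $\phi^{-1}$ after a change of variables. First I would rewrite $F^\phi(V)$ in a more tractable form. Setting $V = \phi(t)$, i.e. $t = \phi^{-1}(V)$, and integrating by parts (using $\phi(0)=0$ from \eqref{eq-0inflim}, so the boundary terms vanish), one gets
\[
\int_0^{\phi^{-1}(V)}\bigl(V-\phi(x)\bigr)\dif x = \int_0^{\phi^{-1}(V)} x\,\phi'(x)\dif x
= \int_0^{V} \phi^{-1}(s)\,\dif s,
\]
where the last equality is the substitution $s=\phi(x)$. Hence $F^\phi(V) = \frac{1}{V}\int_0^V \phi^{-1}(s)\,\dif s$ is simply the \emph{average} of $\phi^{-1}$ on $[0,V]$. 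This identity is the key reformulation; everything else follows from it.

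For part (1): since $\phi\in\mathscr F$, the function $\phi^{-1}\colon[0,\infty)\to[0,\infty)$ is continuous, strictly increasing, surjective, piecewise smooth, and $\phi^{-1}(0)=0$. The average $g(V):=\frac1V\int_0^V\phi^{-1}$ extends continuously to $V=0$ with value $\phi^{-1}(0)=0$; it is piecewise smooth on $(0,\infty)$ with derivative $g'(V) = \frac{1}{V}\bigl(\phi^{-1}(V) - g(V)\bigr)$, which is strictly positive because $\phi^{-1}$ is strictly increasing (so its value at the right endpoint strictly exceeds its average over $[0,V]$); thus $g$ is strictly increasing. Finally $g(V)\to\infty$ as $V\to\infty$: given the average of a function tending to $+\infty$ also tends to $+\infty$. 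Therefore $F^\phi = g \in \mathscr F$.

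For part (2): with $h:=\phi^{-1}$ and $k:=\psi^{-1}$, the hypothesis is $h'(V)\ge k'(V)$ for all $V>0$, and since $h(0)=k(0)=0$ this gives $h(V)\ge k(V)$ for all $V\ge 0$ by integration; averaging preserves the inequality, so $F^\phi(V)\ge F^\psi(V)$. If $h'(V)>k'(V)$ for $V\ge V_1$, then $h(V)-k(V)$ is nondecreasing on $[0,V_1]$ and strictly increasing on $[V_1,\infty)$, so $h>k$ strictly on $(V_1,\infty)$, and hence the averages satisfy $F^\phi(V)>F^\psi(V)$ for $V>V_1$ (one checks $V=V_1$ separately using that $h\ge k$ on $[0,V_1]$ with positive-measure strict inequality just to its right, or simply notes the averaging integral picks up the strict part). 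The consequence about inverses is then immediate: from $F^\phi\ge F^\psi$ as strictly increasing surjections of $[0,\infty)$ onto itself, applying $(F^\psi)^{-1}$ on the left and $(F^\phi)^{-1}$ on the right to $F^\phi((F^\phi)^{-1}(A)) = A$ yields $(F^\phi)^{-1}(A)\le (F^\psi)^{-1}(A)$, with strict inequality whenever the argument lands in the region $V>V_1$ where the functions are strictly ordered, i.e. whenever $A = F^\phi((F^\phi)^{-1}(A)) > F^\psi(V_1)$ forces $(F^\phi)^{-1}(A)$ — equivalently where $F^\psi$ evaluated there exceeds $F^\psi(V_1)$.

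I do not anticipate a serious obstacle here: the only mild subtlety is bookkeeping the boundary case $V=V_1$ (and the corresponding threshold $A>F^\psi(V_1)$ for the strict inequality of inverses), and making sure the integration-by-parts is justified across the finitely many non-smooth points of $\phi$, which is routine since $\phi$ is continuous and piecewise $C^1$.
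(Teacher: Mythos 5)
Your proof is correct and is essentially the paper's argument, but packaged through a cleaner lens. The identity
\[
F^\phi(V)=\frac{1}{V}\int_0^V \phi^{-1}(s)\,\dif s
\]
obtained by your integration by parts (equivalently: the paper's $G'(V)=\phi^{-1}(V)$ already encodes $G(V)=\int_0^V \phi^{-1}$) makes both parts of the lemma transparent: $F^\phi$ is the running average of the increasing surjection $\phi^{-1}$, and part (2) reduces to the fact that averaging preserves the pointwise order. The paper phrases this as ``integrating the assumed inequality twice,'' computing instead $F'(V)=H(V)/V^2$ and $H'(V)=V(\phi^{-1})'(V)$, which is the same computation routed through a different pair of auxiliary functions. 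Your version is, if anything, a bit more readable. Two small remarks. First, your final sentence deducing the strict inequality of inverses trails off mid-thought and as written is not a proof; the clean way (which is what the paper does) is: if $A>F^\psi(V_1)$ then $V_\psi:=(F^\psi)^{-1}(A)>V_1$, hence $F^\phi(V_\psi)>F^\psi(V_\psi)=A=F^\phi((F^\phi)^{-1}(A))$, and strict monotonicity of $F^\phi$ gives $(F^\phi)^{-1}(A)<V_\psi$. Second, you are right to flag the $V=V_1$ endpoint: from $(\phi^{-1})'>(\psi^{-1})'$ only on $[V_1,\infty)$ one gets $\phi^{-1}>\psi^{-1}$ and hence $F^\phi>F^\psi$ only on $(V_1,\infty)$, not at $V_1$ itself (take $\phi^{-1}=\psi^{-1}$ on $[0,V_1]$). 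The lemma's ``$V\ge V_1$'' should really be ``$V>V_1$''; this is harmless for the paper's applications, which all invoke the strict inequality on an open ray, but your parenthetical attempt to rescue the closed endpoint (``strict inequality just to its right'') does not actually work and should simply be dropped.
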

\begin{proof}
    First assume that $\phi(x)$ is smooth and strictly increasing on $[0, \infty)$. Then its inverse function $\phi^{-1}(x)$ is also smooth, strictly increasing and satisfies the limit conditions in \eqref{eq-0inflim}. 
    Set $G(V)=\int_0^{\phi^{-1}(V)}(V-\phi(x))\dif x$ so that $F(V)=F^{\phi}(V)=G(V)/V$. Then $G$ is a smooth function of $V\in [0, \infty)$ satisfying $G(0)=0$. Its derivative is equal to $G'(V)=\int_0^{\phi^{-1}(V)}1 \, \dif x=\phi^{-1}(V)$. We can then calculate the derivative of $F(V)$ at any $V\in (0, \infty)$: 
    \begin{equation}\label{eq-dFdV}
        F'(V)=\frac{G'\cdot V-G}{V^2}=\frac{\int_0^{\phi^{-1}(V)}\phi(x)\dif x}{V^2}=\frac{H(V)}{V^2}>0
    \end{equation}
    with $H(V)=H^\phi(V)=\int_0^{\phi^{-1}(V)}\phi(x)\dif x$. So we know that $F(V)$ is a strictly increasing function of $V\in [0, \infty)$. Using L'Hospital's rule, we easily see that $F$ satisfies the limit conditions in \eqref{eq-0inflim}. So if we define $F(0)=0$, then $F\in \mathscr{F}$.  
   
   Note that $H=H(V)$ is a continuous function on $[0, \infty)$ satisfying $H(0)=0$ and $H'(V)=\phi(\phi^{-1}(V))(\phi^{-1})'(V)=V (\phi^{-1})'(V)$. By using \eqref{eq-dFdV} and integrating the assumed inequality twice, we get
   $F^\phi(V)\ge F^\psi(V)$. If moreover $(\phi^{-1})'(V)>(\psi^{-1})'(V)$ on an interval $(V_1, V_1+\epsilon)$, then we get the strict inequality $H^\phi(V)>H^\psi(V)$ for $V>V_1$ by the following estimate:
   \begin{eqnarray*}
       H^\phi(V)&=&H^\phi(V_1)+\int_{V_1}^V t (\phi^{-1})'(t)dt \\
       &>& H^\psi(V_1)+\int_{V_1}^V t (\psi^{-1})'(t)dt=H^\psi(V). 
   \end{eqnarray*}
  By using \eqref{eq-dFdV} and integrating once more, we get the strict inequality $F^\phi(V)> F^\psi(V)$ when $V>V_1$. 
   
   For the last statement, note that under the assumption of strict inequality, we know that if $A>F^\psi(V_1)$ then $(F^\psi)^{-1}(A)>V_1$ which implies:
   $F^\phi((F^\psi)^{-1}(A))>F^\psi((F^\psi)^{-1}(A))=A$. So we conclude that $(F^\psi)^{-1}(A)>(F^\phi)^{-1}(A)$ when $A>F^\psi(V_1)$. 

   If $\phi$ is only piecewise smooth, we can just carry out the above argument piecewise on each interval of smoothness. The same argument applies to prove the second statement too. 
\end{proof}

\begin{cor}\label{cor-calbd}
    Assume $(X, L:=-K_X)$ is a K-semistable $\IQ$-Fano variety. Let $E$ be a prime divisor over $X$ that satisfies the estimate: there exists $\phi\in \mathscr{F}$ such that for any $x\in [0, \infty)$, 
    \begin{equation}\label{eq-vollb}
        \vol(L-xE)\gee V-\phi(x). 
    \end{equation}
  Then there exists a unique solution $T\in (A_X(E), \infty)$ to the equation 
    \begin{equation}\label{eq-pseudoT}
        (T-A_X(E))\phi(T)=\Phi(T)
    \end{equation} 
    where $\Phi(x)=\int_0^x \phi(t)\dif t$ is the primitive function of $\phi(x)$ with $\Phi(0)=0$. 
    Moreover, we have an estimate of the volume: 
    \begin{equation}\label{eq-volbdT}
        V=(-K_X)^n\lee \phi(T)=(F^{\phi})^{-1}(A(E)).
    \end{equation} 
    The equality $V=\phi(T)$ holds if and only if the equality holds in \eqref{eq-vollb} for $x\in [0, T]$. In this case, $T$ is the pseudo-effective threshold of the valuation $\ord_E$ and in particular $\vol(L-TE)=0$. 
\end{cor}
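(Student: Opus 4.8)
The plan is to treat the problem as a one-variable calculus exercise built on top of Lemma \ref{lem-cal} and the valuative criterion Theorem \ref{thm:beta}. First I would recall that K-semistability gives $A_X(E) \geq S(-K_X;E) = \frac{1}{V}\int_0^\infty \vol(L-xE)\,\dif x$, where $V = (-K_X)^n$, and that the integrand vanishes beyond the pseudo-effective threshold $\tau$ of $\ord_E$. Using the hypothesis \eqref{eq-vollb}, $\vol(L-xE) \geq V - \phi(x)$ for all $x$, I would restrict attention to $x \in [0, \phi^{-1}(V)]$ (on that range the bound $V - \phi(x)$ is nonnegative, and since $\vol \geq 0$ the inequality $\vol(L-xE)\geq \max(0, V-\phi(x))$ always holds; beyond $\phi^{-1}(V)$ the lower bound is vacuous). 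Hence
\[
A_X(E) \;\geq\; S(E) \;=\; \frac{1}{V}\int_0^{\tau}\vol(L-xE)\,\dif x \;\geq\; \frac{1}{V}\int_0^{\phi^{-1}(V)}\bigl(V-\phi(x)\bigr)\,\dif x \;=\; F^\phi(V).
\]
By Lemma \ref{lem-cal}(1), $F^\phi \in \mathscr{F}$, so it is a strictly increasing bijection of $[0,\infty)$, and therefore $V \leq (F^\phi)^{-1}(A_X(E))$. This is exactly the inequality \eqref{eq-volbdT} once I identify $(F^\phi)^{-1}(A_X(E))$ with $\phi(T)$ for the appropriate $T$.

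Next I would set up the equation \eqref{eq-pseudoT}. Writing $T := \phi^{-1}(V)$, so that $V = \phi(T)$, the inequality $A_X(E) \geq F^\phi(V)$ becomes $A_X(E)\,\phi(T) \geq \int_0^{T}(\phi(T) - \phi(x))\,\dif x = T\phi(T) - \Phi(T)$, i.e. $(T - A_X(E))\phi(T) \leq \Phi(T)$. To get the exact value of the volume bound I should instead consider the function $g(T) := (T - A_X(E))\phi(T) - \Phi(T)$ and show it has a unique zero. Since $g'(T) = \phi(T) + (T - A_X(E))\phi'(T) - \phi(T) = (T - A_X(E))\phi'(T)$, and $\phi' > 0$, the function $g$ is strictly decreasing on $[0, A_X(E)]$ and strictly increasing on $[A_X(E), \infty)$; combined with $g(0) = -\Phi(0) \le 0$ wait — more carefully, $g(0) = -A_X(E)\phi(0) - \Phi(0) = 0$ since $\phi(0) = 0$. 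So $T = 0$ is always a trivial root; the content is that there is a \emph{unique positive} root, which follows because $g$ decreases then increases with $g(0) = 0$ and $g(T) \to +\infty$ (as $\Phi$ grows at most like $T\phi(T)$ while $(T - A_X(E))\phi(T)$ dominates). I would then verify that this unique positive $T$ satisfies $F^\phi(\phi(T)) = A_X(E)$, i.e. $\phi(T) = (F^\phi)^{-1}(A_X(E))$, by substituting $V = \phi(T)$ into the definition of $F^\phi$ and using \eqref{eq-pseudoT}. This gives both the existence/uniqueness of $T$ and the identity in \eqref{eq-volbdT}.

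For the equality characterization: if $V = \phi(T)$, then tracing back through the chain of inequalities, the only place slack was introduced is the step $\int_0^\tau \vol(L - xE)\,\dif x \geq \int_0^{\phi^{-1}(V)}(V - \phi(x))\,\dif x$. Equality forces $\vol(L-xE) = V - \phi(x)$ for a.e. $x \in [0, \phi^{-1}(V)] = [0,T]$ — and by continuity of $\vol$ in the big cone (cited in the excerpt), for all $x \in [0,T]$. Conversely, if equality holds in \eqref{eq-vollb} on $[0,T]$, then since $\vol(L - xE) = V - \phi(x)$ and this must stay nonnegative, we get $\vol(L - TE) = V - \phi(T) = 0$, so $T$ is at least the pseudo-effective threshold $\tau$; and since $\vol(L - xE) = 0$ for $x > T$ would be needed (it can't be negative and the function is nonincreasing past where it hits $0$) while the computation $A_X(E) = F^\phi(V)$ used exactly the integral up to $T$, one concludes $\tau = T$.

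The main obstacle I anticipate is purely bookkeeping: carefully handling the cutoff at $\phi^{-1}(V)$ versus the true pseudo-effective threshold $\tau$, and making sure the chain of inequalities $A_X(E) \geq S(E) \geq F^\phi(V)$ is tight in exactly the claimed way. The monotonicity analysis of $g$ and the uniqueness of its positive root is routine given $\phi \in \mathscr{F}$ (in particular $\phi(0) = 0$, strictly increasing), and the identification $\phi(T) = (F^\phi)^{-1}(A_X(E))$ is a direct unwinding of definitions. No deep input beyond Theorem \ref{thm:beta}, Lemma \ref{lem-cal}, and continuity of the volume function is needed.
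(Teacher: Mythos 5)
Your argument follows the same route as the paper's: K-semistability (Theorem \ref{thm:beta}) plus \eqref{eq-vollb} gives $A_X(E) \geq S(E) \geq F^{\phi}(V)$, and inverting the strictly increasing function $F^{\phi}$ from Lemma \ref{lem-cal} yields $V \leq (F^{\phi})^{-1}(A_X(E)) = \phi(T)$, with \eqref{eq-pseudoT} simply rewriting $A_X(E)=F^{\phi}(\phi(T))$ under the change of variable $T=\phi^{-1}(V^*)$. Two minor points of rigor in your write-up: the limit $g(T)\to+\infty$ should be justified via the identity $g(T)=\phi(T)\bigl(F^{\phi}(\phi(T))-A_X(E)\bigr)$ together with the divergence of $F^{\phi}$ established in Lemma \ref{lem-cal}, since the heuristic that ``$\Phi$ grows at most like $T\phi(T)$ while $(T-A_X(E))\phi(T)$ dominates'' does not suffice on its own ($T\phi(T)$ and $(T-A_X(E))\phi(T)$ have the same leading order); and in the equality analysis the slack occurs in \emph{two} places, $A_X(E)\geq S(E)$ and $S(E)\geq F^{\phi}(V)$, both of which must be saturated when $V=\phi(T)$, after which the pointwise equality $\vol(L-xE)=V-\phi(x)$ on $[0,T]$ and $\vol(L-xE)=0$ for $x>T$ follow from $\vol(L-xE)\geq \max(0,V-\phi(x))$. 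The paper's written proof stops at the existence and uniqueness of $T$ and the bound \eqref{eq-volbdT}, so your completion of the equality characterization and the identification of $T$ with the pseudo-effective threshold is a correct and welcome addition.
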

\begin{proof}
    We have an estimate:
    \begin{eqnarray*}
        0&\lee& A_X(E)-\frac{1}{V}\int_0^{+\infty}\vol(L-xE)\dif x\lee A_X(E)-\frac{1}{V}\int_0^{\phi^{-1}(V)}(V-\phi(x))\dif x\\
        &=&A_X(E)-F^\phi(V).
    \end{eqnarray*}
    By the above lemma, $F^\phi(V)$ is a strictly increasing function that diverges to $+\infty$ as $V\rightarrow+\infty$. So there exists a unique $V^*$ that satisfies $A(E)=F^\phi(V^*)$ which is equivalent to the equality:
    \begin{equation*}
       (\phi^{-1}(V^*)-A(E))V^*=\int_0^{\phi^{-1}(V^*)}\phi(x)\dif x. 
    \end{equation*}
    Setting $T=\phi^{-1}(V^*)$, we see that $T$ satisfies 
    $(T-A(E))\phi(T)=\Phi(T)$. 
 \end{proof}

\begin{ex}[Fujita \cite{Fuj18}]
   Assume that $\phi(x)=x^n$ and $A(E)=n$. Then 
   $F^\phi(V)=\frac{n}{n+1}V^{1/n}$ and $\Phi(x)=\int_0^x\phi(t)dt=\frac{x^{n+1}}{n+1}$. The equation \eqref{eq-pseudoT} has the solution $T=n+1$, yielding $\phi(T)=(n+1)^n=(F^\phi)^{-1}(n)$. 
\end{ex}

\begin{ex}\label{ex-2nn}
   Assume that $\phi(x)=2n x^{n-1}$ and $A(E)=n-1$. Then  
   $F^\phi(V)=\frac{n-1}{n}\left(\frac{V}{2n}\right)^{1/(n-1)}$ and  $\Phi(x)=\int_0^x\phi(t)dt=2x^n$. The equation \eqref{eq-pseudoT} has the solution $T=n$, yielding $\phi(T)=2n^n=(F^\phi)^{-1}(n-1)$. 
\end{ex}

\subsection{Seshadri Constant} Let $X$ be a normal projective variety and $L$ an ample $\IQ$-Cartier divisor on $X$. 
Denote by $X^{\text{sm}}$ the set of smooth points of $X$. 
Let $Z\subset X^{\text{sm}}$ be a nonsingular closed subvariety of $X$ of codimension $r$. The Seshadri constant of $L$ at $Z$ is defined as $\epsilon(L,Z)\coloneqq \sup\{t\in \IR_{>0}\mid \pi^*L-tE\, \text{\,is ample}\}$, where $\pi\colon \bl_Z X\rightarrow X$ is the blowup of $X$ along $Z$ and $E\cong \IP(N_{Z/X}^{\vee})$ is the exceptional divisor. When $X = \IP^r\times Z$ and $Z$ is identified with $\{p\}\times Z$ for a fixed $p\in \IP^r$, $(\mathrm{Bl}_Z X, E)$ are the same as $((\mathrm{Bl}_{p}\IP^r)\times Z, E_p\times Z)$ where $E_p\cong \IP^{r-1}$ is the exceptional divisor of the blow up $\mathrm{Bl}_p \IP^r\rightarrow \IP^r$. We see that $\epsilon(-K_{\IP^r\times Z},Z) = r+1$. 

The next proposition generalizes a result of \cite{LZ18} to higher dimensional subvariety $Z$ with trivial normal bundle, which says there is a gap between $r$ and $r+1$ for all the possible value of $\epsilon(-K_X,Z)$.

\begin{prop}\label{prop-LZ}
    Suppose $X$ is a $\IQ$-Fano variety of dimension $n$, and let $Z \subset X^{\text{sm}}$ be a nonsingular subvariety of codimension $r\gee 2$ 
    Assume that the normal bundle of $Z$ is trivial, i.e., $N_{Z/X} \cong \CO_Z^{\oplus r}$, and $\epsilon(-K_X,Z)>r$.
    Then $X \cong \IP^r\times Z$ and $Z$ is identified with $\{p\}\times Z$ for a point $p\in \IP^r$.
\end{prop}

\begin{proof}
    We follow the argument as in \cite[Theorem 2]{LZ18}. Because the normal bundle is assumed to be trivial, by the adjunction formula $-K_Z=(-K_X)|_Z$, and hence $Z$ is a Fano manifold. 
    For simplicity, we denote $\epsilon\coloneqq \epsilon(-K_X,Z)>r$. We choose a rational number $\epsilon'$ such that $r<\epsilon'<\epsilon$. Let $\pi\colon \hat{X}\coloneqq\bl_Z X\rightarrow X$ be the blowup of $X$ along $Z$ and $E$ is the exceptional divisor. We denote $B\coloneqq \pi^*(-K_X)-\epsilon' E$. From the definition of $\epsilon(-K_X,Z)$, we know $B$ is a nef divisor. We have $K_{\hat{X}} = \pi^*K_X+(r-1)E$, so $B-K_{\hat{X}} = 2(\pi^*(-K_X)-\frac{\epsilon'+r-1}{2}E)$ is nef and big since $(\epsilon'+r-1)/2<\epsilon'$. Then by Kawamata's basepoint-free theorem (\cite[Theorem 3.3]{KM98}), we know $B$ is indeed semiample. Then there exists a fibration $\Phi\colon \hat{X}\rightarrow Y\subseteq \IP(H^0(\hat{X},kB))$ induced by the complete linear series $|kB|$ for some $k\gg 0$ and $Y$ is a closed subvariety.  Next, let $m$ be an integer such that $mB$ is Cartier. Note that 
    \begin{eqnarray*}
        mB-E-K_{\hat{X}} = (m+1)\left(\pi^*(-K_X)-\frac{m\epsilon'+r}{m+1}E\right)
    \end{eqnarray*}
    is an ample divisor since $(m\epsilon'+r)/(m+1)<\epsilon'$. Then, by Kodaira's vanishing theorem, we get $H^1(\hat{X}, mB-E)=0$. Therefore, the natural map $H^0(\hat{X}, mB)\rightarrow H^0(E,mB|_E)$ is surjective for all $m>0$ such that $mB$ is Cartier. We conclude that $\Phi|_E\colon E\rightarrow Y$ is a closed embedding. Since $\hat{X}$ has klt singularites, then it has rational singularities (\cite[Theorem 5.22]{KM98}), so in particular it is Cohen-Macaulay. Since $B=-K_{\hat{X}}+(r-1-\epsilon')E\sim_{\Phi,\IQ}0$, we get $-K_{\hat{X}}\sim_{\Phi,\IQ} \lambda E$ where $\lambda=\epsilon'-r+1>1$. By \cite[Lemma 8]{LZ18}, we have $\Phi\colon \hat{X}\rightarrow Y$ is not birational. Thus, $\Phi$ must be a fiber type contraction.

    Since we have already shown $\Phi|_E\colon E\rightarrow Y$ is a closed embedding, we conclude that $\Phi|_E\colon E\rightarrow Y$ is in fact an isomorphism, so $ Y\cong E \cong \IP(N_{Z/X}^{\vee})\cong Z\times \IP^{r-1}$. A general fiber of $\Phi$ is a smooth rational curve. By a similar argument as in \cite[Lemma 6]{LZ18}, we can show $\Phi\colon \hat{X}\rightarrow Y$ is a smooth $\IP^1$-fibration. Note that $s=\Phi|_E^{-1}\colon Y\rightarrow E$ gives a section of $\Phi$, then there exists a rank 2 vector bundle $\CE$ over $Y$ such that $\hat{X} = \IP_Y(\CE)$. By \cite[Proposition V.2.6]{Har77}, there exists an invertible sheaf $\CL$ on Y and a surjective morphism $\CE \rightarrow \CL$. We denote $\CK = \ker(\CE \rightarrow \CL)$. Then $\CO_{Y}(-1)\cong s^*N_{E/\hat{X}}\cong \CL \otimes \CK^{-1}$. We know that the choice of $\CE$ is unique up to twisting of an invertible sheaf. So we may assume $\CK = \CO_Y$, then $\CL = \CO_{Y}(-1)\cong p_2^* \CO_{\IP^{r-1}}(-1)$ with the projection $p_2\colon Y\rightarrow\IP^{r-1}$ and we have the following short exact sequence
    $$
0\rightarrow \CO_{Y}\rightarrow \CE\rightarrow \CO_Y(-1)\rightarrow 0.
$$
By \cite[Prop III.6.3]{Har77}, we have 
\begin{eqnarray*}
    \ext^1(\CL,\CO_Y) = H^1(Y,\CL^{\vee})
    =\bigoplus_{i+j=1}H^i(\IP^{r-1},\CO_{\IP^{r-1}}(1))\otimes H^j(Z,\CO_Z)=0,
\end{eqnarray*}
so the short exact sequence actually splits. Therefore, $\CE \cong \CO_Y \oplus \CO_Y(-1)$ and $\hat{X}\cong \IP(\CO_Y\oplus \CO_{Y}(-1))\cong \IP(\CO_{\IP^{r-1}}\oplus\CO_{\IP^{r-1}}(-1))\times Z\cong \bl_{p}\IP^r\times Z$ is isomorphic to $\IP^r\times Z$ blowup along $\{p\}\times Z$. So $X\cong  \IP^r \times Z$.
\end{proof}

\subsection{Minimal rational curves}

Since the celebrated work of Mori that introduces the bend-and-break method for constructing rational curves on Fano manifolds (\cite{Mor79}), the theory of rational curves has been developed extensively and has found numerous applications in algebraic geometry, especially in the classification of Fano manifolds with special properties. Here we recall some basic knowledge of rational curves and refer to \cite{Kol96} for detailed expositions. 

A free rational curve is represented by a morphism $f: \IP^1\rightarrow X$ that satisfies $f^*TX=\oplus_{i=1}^n \CO_{\IP^1}(a_i)$ with $a_i\ge 0$. A free rational curve is called minimal (or standard) if
$$
f^*TX=\CO_{\IP^1}(2)\oplus \CO_{\IP^1}(1)^{\oplus(d-2)}\oplus \CO_{\IP^1}^{\oplus (n-d+1)}. 
$$
Such a morphism $f$ must be an immersion. 
We will also be interested in its normal bundle:
$$
N_{f/X}:=f^*TX/T\IP^1=\CO(1)^{\oplus (d-2)}\oplus \CO^{\oplus (n-d+1)}.
$$
Any Fano manifold always admits minimal rational curves, which can be obtained from the bend-and-break process starting with a free rational curve (see \cite[IV.Theorem 2.10]{Kol96}). Denote by $\mathrm{RatCurves}^n(X)$ the normalization of open subset of $\mathrm{Ch}(X)$ parametrizing integral rational curves. 
An irreducible component $\CM$ of $\mathrm{RatCurves}^n(X)$ is referred to as a family of rational curves on $X$. 
The anticanonical degree $\deg(\CM)$ of the family $\CM$ is defined to be $-K_X\cdot C$ for any curve $C$ belonging to the family. This family $\CM$ is equipped with a $\IP^1$-bundle $p: \CU\rightarrow \CM$ and an evaluation morphism $q: \CU\rightarrow X$.  
The family $\CM$ is a \textit{dominating} family if the evaluation morphism $q: \CU\rightarrow X$ is dominant (i.e. has a dense image). This is equivalent to the condition that a general rational curve in $\CM$ is free. 
A dominating family $\CM$ is \textit{locally unsplit} if, for a general point $x\in X$, the subfamily $\CM_x=p(q^{-1}(x))$ parametrizing curves through $x$ is proper (i.e. compact). Note that if $\CM$ is a dominating family of rational curves on $X$ that has a minimal anticanonical degree, then $\CM$ is a dominating locally unsplit family. But not all dominating locally unsplit family has a minimal anticanonical degree.

Let $\CM$ be a dominating locally unsplit family of rational curves. By Mori's bend-and-break argument, we know that for a general point $x\in X$, a general curve in $\CM_x$ is minimal. In particular, $\deg(\CM)\in \{2, 3, \cdots, n+1\}$. Indeed, if a general free rational curve $C=[f]$ in $\CM_x$ is not minimal, then there are at least two $\CO(2)$ summands in the splitting of $f^*TX$. We can then fix two points on the curve $C$ and bend-and-break the rational curve into a non-integral (reducible) curve which would contradict the locally unsplit property. 
Following \cite{Miy04, CD15}, we set 
\begin{equation}\label{eq-lX1}
    l_X:=\min\{\deg \CM; \CM \text{ is a dominating locally unsplit family of rational curves on } X\}.
\end{equation}
By the above discussion, we see that (see \cite[Remark 4.2]{CD15}): 
\begin{eqnarray}\label{eq-lX}
    l_X&=&\min\{-K_X\cdot C; C\subset X \text{ is a free rational curve on } X\}\nonumber \\
    &=& \min\{-K_X\cdot C; C\subset X \text{ is a minimal rational curve on } X\}.
\end{eqnarray} 
\begin{thm}\label{thm-lX}
Using the above notation, we have the following important classificaiton results: 
\begin{enumerate}
    \item (\cite{CMS02}) $l_X=n+1$ if and only if $X\cong \IP^n$. 
    \item (\cite{Miy04, CD15, DH17}) If $n\geq 3$, then $l_X=n$ if and only if $X\cong Q^n$ or $X$ is the blowup of $\IP^n$ along a smooth codimension two subvariety $Y$ of degree $d_Y\in \{1,\dots, n\}$ that is contained in a hyperplane. 
\end{enumerate}
\end{thm}

\begin{ex}[\cite{HM03}]\label{ex-l2}
On the other extreme, when the Picard number is one, the condition $l_X=2$ is equivalent to the following condition:
     \begin{enumerate}
         \item There exists a minimal rational curve with trivial normal bundle. 
         \item For a general point $x\in X$, there are only finitely many rational curves through $x$ which have minimal degree with respect to $K_X^{-1}$. 
         \item for a general point $x\in X$, there exists a rational curve that has degree 2 with respect to $K_X^{-1}$.       
     \end{enumerate}
    There are many examples of such Fano manifolds. For example, except for the projective space $\IP^3$ and 3-dimensional hyperquadric $Q^3$, all Fano 3-folds of Picard number 1 satisfy this condition.
    Hypersurfaces of $\IP^{n+1}$ of degree $n$ or $n+1$ are also examples of such Fano manifolds when $n\gee 3$. 
\end{ex}

\section{Submanifolds with trivial normal bundles}\label{sec-ntrivial}


In this section, we will prove Theorem \ref{thm-ntrivial} by generalizing the methods from \cite{Fuj18, LZ18}, and apply it in the toric case to prove Theorem \ref{thm-toric}.
We start with a proposition that proves a more general version of the estimate \eqref{eq-volprod} by incorporating the $\delta$-invariant. 
\begin{prop}\label{prop:Ntrivial}
    Let $X$ be an $n$-dimensional Fano manifold, and $Z\subset X$ be a codimension $r$ non-singular subvariety with trivial normal bundle $N_{Z/X}=\CO_{Z}^{\oplus r}$. Set $d = (-K_X)^{n-r}\cdot Z$. Then we have the estimate:
    $$
    (-K_X)^n \lee \delta(X)^{-r}\cdot(r+1)^r\binom{n}{r}\cdot d=\delta(X)^{-r}\cdot( -K_{\IP^r\times Z})^n.
    $$  
\end{prop}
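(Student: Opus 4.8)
The plan is to bound the volume by testing K-semistability (through the $\delta$-invariant) against the divisorial valuation obtained by blowing up $X$ along $Z$. Let $\pi\colon \hat X = \bl_Z X \to X$ be the blowup with exceptional divisor $E \cong \IP(N_{Z/X}^\vee) = Z \times \IP^{r-1}$, and let $\ord_E$ be the associated valuation. Since $X$ is K-semistable, the $\delta$-invariant satisfies $\delta(X) \le A_X(E)/S(E)$, i.e. $S(E) \le \delta(X)^{-1} A_X(E)$. The two inputs I need are: (i) the log discrepancy $A_X(E) = r$, which follows from the standard formula $K_{\hat X} = \pi^* K_X + (r-1)E$; and (ii) a lower bound for the volume function $x \mapsto \vol(-K_X - xE)$ that is sharp enough to make the resulting estimate on $S(E)$ match $2n^n$ in the extremal case. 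The key geometric point is that the trivial normal bundle lets me compare with the model case $\IP^r \times Z$ where $Z$ is $\{p\}\times Z$: there the blowup is $(\bl_p \IP^r) \times Z$, and the Seshadri-type behaviour of $-K_X - xE$ can be read off.

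The main step is to establish the volume lower bound. First I would identify the nef/pseudo-effective thresholds: $\pi^*(-K_X) - xE$ is nef for $x \le \epsilon(-K_X, Z)$ and, regardless, one has a uniform bound $\epsilon(-K_X,Z) \ge r$ unless $X \cong \IP^r \times Z$ (this is exactly Proposition \ref{prop-LZ}). More importantly, for the volume I want an explicit polynomial lower bound. Expanding $\vol(\pi^*(-K_X) - xE) = (\pi^*(-K_X) - xE)^n$ on the smooth model using $E^j \cdot \pi^*(-K_X)^{n-j}$ and the relations coming from $N_{Z/X}$ trivial (so that intersection numbers of $E$ reduce to computations on $\IP(\CO_Z^{\oplus r}) = Z \times \IP^{r-1}$, where Segre classes vanish in the $Z$-directions), I expect to get, for $x$ in the range where $\pi^*(-K_X)-xE$ is nef,
\begin{equation*}
\vol(-K_X - xE) = (-K_X)^n - \binom{n}{r} d\, x^r,
\end{equation*}
and for larger $x$ the volume is bounded below by $0$ but actually one should argue it stays $\le$ this polynomial; for the purpose of the upper bound on $S(E)$ I only need $\vol(-K_X - xE) \ge ((-K_X)^n - \binom{n}{r} d\, x^r)_+$ up to the threshold and the trivial bound $\vol \ge 0$ afterwards. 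Actually the clean way is to invoke Corollary \ref{cor-calbd} with $\phi(x) = \binom{n}{r} d\, x^r$ and $V = (-K_X)^n$: then $\vol(-K_X - xE) \ge V - \phi(x)$ holds (the volume is a concave-type decreasing function dominating the truncated polynomial), $A_X(E) = r$, and the corollary yields $V = (-K_X)^n \le \phi(T) = (F^\phi)^{-1}(A_X(E))$ after rescaling by $\delta(X)$.

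Concretely, with $\phi(x) = \binom{n}{r} d\, x^r$ one computes $\Phi(x) = \binom{n}{r} d\, x^{r+1}/(r+1)$, so the equation $(T - A)\,\phi(T) = \Phi(T)$ with $A = A_X(E)$ gives $T - A = T/(r+1)$, hence $T = (r+1) A/r$, and $\phi(T) = \binom{n}{r} d \,(r+1)^r (A/r)^r$. Taking $A = \delta(X)^{-1} r$ (the K-semistability input through $\delta$) gives $\phi(T) = \binom{n}{r} d\,(r+1)^r \delta(X)^{-r}$, which is exactly the claimed bound $\delta(X)^{-r}(r+1)^r \binom{n}{r} d = \delta(X)^{-r}(-K_{\IP^r \times Z})^n$; the last equality is the computation $(-K_{\IP^r\times Z})^n = \binom{n}{r}(r+1)^r d$ using $-K_{\IP^r \times Z} = -K_{\IP^r} \boxtimes (-K_Z)$ and $(-K_Z)^{n-r} = d$. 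I would note that the specialization $r=1$, $d = n$ recovers $\phi(x) = nx^{n-1}$, $A = n-1$, and $\phi(T) = 2n^n$, matching Example \ref{ex-2nn}, which is how Theorem \ref{thm-P1trivial} will drop out.

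The main obstacle is the volume estimate $\vol(-K_X - xE) \ge V - \phi(x)$ for all $x \ge 0$, not merely in the nef range. For $x$ up to the Seshadri/nef threshold this is an honest intersection-number computation using triviality of $N_{Z/X}$; the delicate part is that beyond the nef threshold the movable/positive part of $\pi^*(-K_X) - xE$ could a priori drop faster than the polynomial predicts. I would handle this by a Zariski-decomposition / Okounkov-body concavity argument: $x \mapsto \vol(-K_X-xE)^{1/n}$ is concave on the pseudo-effective interval, so it lies below its tangent line but above the chord to the pseudo-effective threshold, and combined with the exact value and derivative at $x=0$ (or on the nef range) this forces $\vol \ge V - \phi(x)$ throughout — with equality characterizing precisely when $\pi^*(-K_X)-xE$ remains nef all the way to its pseudo-effective threshold, i.e. the $\IP^r\times Z$ case via Proposition \ref{prop-LZ}. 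Tracking this equality case carefully, through the ``equality in \eqref{eq-vollb}'' clause of Corollary \ref{cor-calbd}, is what will eventually give the rigidity statement in Theorem \ref{thm-ntrivial}, though Proposition \ref{prop:Ntrivial} itself only needs the inequality.
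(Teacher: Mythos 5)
Your overall strategy coincides with the paper's: blow up along $Z$, compute $A_X(E)=r$, lower-bound $\vol(\pi^*(-K_X)-xE)$ by the polynomial $V-\binom{n}{r}d\,x^r$, integrate to bound $S(E)$, and feed this into $\delta(X)\le A_X(E)/S(E)$. The intersection-number computation and the final numerics (including the rederivation of $\epsilon=(V/(\binom{n}{r}d))^{1/r}$, $S(E)\ge\frac{r}{r+1}\epsilon$, and $V\le\delta^{-r}(r+1)^r\binom{n}{r}d$) are right, and you correctly note the reduction to the $\IP^r\times Z$ model.

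The gap is the justification of the volume lower bound past the nef threshold. You propose to get $\vol(\pi^*L-xE)\ge V-\binom{n}{r}d\,x^r$ for $x\in[\epsilon(-K_X,Z),\epsilon]$ from concavity of $g(x)=\vol(\pi^*L-xE)^{1/n}$. This does not work: concavity places $g$ \emph{below} its tangent lines and \emph{above} its chords, and the comparison function $h(x)=(V-\binom{n}{r}d\,x^r)^{1/n}$ is itself concave (it is the $n$-th root of a concave polynomial), not a chord of $g$. Two concave functions that agree on $[0,\epsilon_{\rm nef}]$ with matching derivative there satisfy \emph{both} $g\le(\text{common tangent})$ and $h\le(\text{common tangent})$; this yields no ordering between $g$ and $h$ beyond $\epsilon_{\rm nef}$, and a priori $g$ could hit zero before $h$ does. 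Also, contrary to your remark, you genuinely need the inequality on $[\epsilon_{\rm nef},\epsilon]$ (not just nonnegativity): the paper's $S(E)\ge\frac{r}{r+1}\epsilon$ comes from $\int_0^\epsilon(V-\binom{n}{r}d\,x^r)\,\dif x$, and $\epsilon_{\rm nef}\le\epsilon$ in general, so truncating at $\epsilon_{\rm nef}$ and using $\vol\ge0$ afterwards gives a strictly weaker bound.

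The paper closes this gap with a cohomological counting argument, not a positivity argument. From the exact sequence $0\to H^0(kL\otimes\CI_Z^{xk})\to H^0(kL)\to H^0(kL\otimes\CO_{xkZ})$ one gets, for \emph{every} $x\ge0$ (no nefness needed), $\vol(\pi^*L-xE)\ge L^n-\limsup_k \frac{h^0(kL\otimes\CO_{xkZ})}{k^n/n!}$. Then one filters $\CO_{xkZ}$ by $\CI_Z^j/\CI_Z^{j+1}\cong\sym^j(N_{Z/X}^\vee)=\CO_Z^{\oplus\binom{r-1+j}{r-1}}$, and Kodaira vanishing on $Z$ makes the resulting sequences exact on $H^0$, giving the exact count $h^0(kL\otimes\CO_{xkZ})=\binom{r+xk-1}{r}h^0(Z,-kK_Z)$, whose leading term is $\binom{n}{r}d\,x^r\cdot k^n/n!$. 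This is where the triviality of $N_{Z/X}$ does its real work; the concavity route you sketched would have to be replaced by this (or an equivalent) direct argument.
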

\begin{proof}
    Let $\pi\colon \hat{X}\coloneqq\bl_Z X \rightarrow X$ be the blowup of $X$ along $Z$ with the exceptional divisor $E$. First, it is clear that the log discrepancy 
    \begin{eqnarray*}
       A_X(E) = 1+ \coeff_{E}(K_{\hat{X}}-\pi^*K_X) = 1+(r-1) = r. 
    \end{eqnarray*}
    For the simplicity of notation, set $L=-K_X$. We can assume $x\in \IQ$ since the volume function $\vol_{\hat{X}}(\pi^*L-xE)$ is continuous. Then we take $k\in \IN^*$ sufficiently large such that $kx\in \IZ_{>0}$. Note that we have the exact sequence:
$$
0\rightarrow H^0(X, kL\otimes \CI_{Z}^{xk})\rightarrow H^0(X, kL)\rightarrow H^0(X, kL\otimes \CO_{xkZ})\rightarrow \cdots,
$$
which implies:
$$
h^0(X,kL\otimes \CI_Z^{xk})\gee h^0(X,kL)-h^0(X,kL\otimes \CO_{xkZ}).
$$
Note that the higher direct images $R^i\pi_*\CO_{\hat{X}}=0$ for $i>0$. By the Leray spectral sequences, we get $H^0(\hat{X},\pi^*(kL)-xkE)=H^0(X,kL\otimes \CI_Z^{xk})$ and $H^0(\hat{X},\pi^*(kL))=H^0(X,kL)$. Thus,
$$\vol_{\hat{X}}(\pi^*L-xE) = \mathop{\limsup}_{k\rightarrow +\infty} \frac{h^0(X,kL\otimes \CI_{Z}^{xk})}{k^n/n!}\gee L^n-\mathop{\limsup}_{k\rightarrow+\infty} \frac{h^0(X, kL\otimes \CO_{xkZ})}{k^n/n!}.$$
For  $j\in \IZ_{\gee 0}$, we use the exact sequence
$$
0\rightarrow \CI_Z^{j}/\CI_Z^{j+1}\rightarrow \CO_X/\CI_Z^{j+1}\rightarrow \CO_X/\CI_Z^{j} \rightarrow 0.
$$
And $H^1(X,kL\otimes (\CI_Z^{j}/\CI_Z^{j+1}))=\oplus H^1(Z, -kK_Z)=0$ from Kodaira vanishing and the assumption that normal bundle $N_{Z/X}$ is trivial. It is easy to prove by induction that:
\begin{eqnarray} \label{eq-tels}
h^0(X, kL\otimes \CO_X/\CI_Z^{xk})&=& h^0(X, kL\otimes \CO_X/\CI_Z^{xk-1})+h^0(X, kL\otimes \CI_Z^{xk-1}/\CI^{xk}_Z) \nonumber \\
&=&\sum_{j=0}^{xk-1}h^0(X, kL\otimes \CI_Z^j/\CI^{j+1}_Z). \label{eq-tels}
\end{eqnarray}
Using the assumption that $N_{Z/X}$ is trivial and  $Z$ is non-singular, by \cite[Thm \rm II. 8.24]{Har77}, we get:
\begin{eqnarray*}
    \CI_Z^{j}/\CI_Z^{j+1}&\cong&\mathrm{Sym}^j(\CI_Z/\CI_Z^2)=\mathrm{Sym}^j(N_{Z/X}^{\vee})=\CO_Z^{\oplus \binom{r-1+j}{r-1}}. 
\end{eqnarray*}
Then the right-hand-side of \eqref{eq-tels} is given by
\begin{eqnarray*}
    \sum_{j=0}^{xk-1}h^0(Z, \CO_X(kL)\otimes \CI_Z^j/\CI^{j+1}_Z) = \sum_{j=0}^{xk-1} \binom{j+r-1}{r-1}\cdot h^0(Z, -kK_Z) = \binom{r+xk-1}{r}\cdot h^0(Z, -kK_Z).
\end{eqnarray*}
Then,
\begin{eqnarray}\label{eq-h0}
    \mathop{\limsup}_{k\rightarrow +\infty} \frac{h^0(X,kL\otimes \CO_{xkZ})}{k^n/n!}&=&\mathop{\limsup}_{k\rightarrow +\infty}\frac{n!}{r!(n-r)!}\cdot\frac{(r+xk-1)!}{(xk-1)!\cdot k^r}\cdot\frac{h^0(Z,-kK_Z)}{k^{n-r}/(n-r)!} \nonumber \\
    &=&\binom{n}{r}(-K_Z)^{n-r}\cdot x^r.
\end{eqnarray}
We set $d = (-K_Z)^{n-r}=(-K_X)^{n-r}\cdot Z$. So we get $\vol_{\hat{X}}(\pi^*L-xE)\gee L^n - d \binom{n}{r}x^r$. 
Since $E\cong Z\times \IP^{r-1}$ and $\CO(-E)|_E=p_2^* \CO_{\IP^{r-1}}(1)$, one can easily see that the right-hand side equals to the top-intersection number 
\begin{eqnarray*}
    (\pi^*L-xE)^n &=& \sum_{k=0}^{n} \binom{n}{k} (\pi^*L)^{n-k}\cdot x^k(-E)^k\\
    &=&(\pi^*L)^n +(-1)^r \binom{n}{r} d x^r\cdot (-1)^{r-1}=L^n-d\binom{n}{r}x^r.
\end{eqnarray*}
Moreover, we have:
\begin{eqnarray*}
    S(-K_X;E) &=& \frac{1}{(-K_X)^n}\int_0^{T_X(E)} \vol_{\hat{X}}(\pi^*(-K_X)-xE) \,\dif x \\
    &\gee& \frac{1}{(-K_X)^n}\int_0^{\epsilon} \left((-K_X)^n - d\binom{n}{r}x^r\right) \,\dif x \\
    &=& \epsilon-\frac{1}{(-K_X)^n} \binom{n}{r} \frac{d\cdot \epsilon^{r+1}}{r+1} = \frac{r}{r+1}\epsilon,
\end{eqnarray*}
where $\epsilon = \left((-K_X)^n/(\binom{n}{r}\cdot d)\right)^{1/r}$. Then,
    \begin{eqnarray*}
        \delta(X) &\lee& \frac{A_X(E)}{S(-K_X;E)} \lee \frac{r}{\frac{r}{r+1}\epsilon} = \frac{r+1}{\left((-K_X)^n/(\binom{n}{r}\cdot d)\right)^{1/r}}.
    \end{eqnarray*}
     Therefore, $(-K_X)^n \lee \delta(X)^{-r}(r+1)^r\cdot \binom{n}{r}\cdot d = \delta(X)^{-r}\cdot (-K_{\IP^r\times Z})^n$.
\end{proof}
\begin{lem}\label{lem-seshadri}
    Under the same notation as the above proposition, we set 
    \begin{equation}\label{eq-LamZ}
    \Lambda_Z(L)\coloneqq \{x\in\IR_{\gee 0}\mid \vol_{\hat{X}}(\pi^*L-xE) = (\pi^*L-xE)^n\}.
    \end{equation}
    Then we have $\epsilon(L,Z) = \max\{t\in \IR_{\gee 0}\mid x\in \Lambda_Z(L) \,\, \text{for all}\, x\in[0,t]\}$.
\end{lem}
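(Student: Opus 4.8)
The plan is to prove the two inequalities $\epsilon(L,Z)\leq\max\{\cdots\}$ and $\epsilon(L,Z)\geq\max\{\cdots\}$ separately. For the first one: if $0\leq x<\epsilon(L,Z)$, then $\pi^*L-xE$ is ample, hence nef, so $\vol_{\hat{X}}(\pi^*L-xE)=(\pi^*L-xE)^n$, since the volume of a nef divisor equals its top self-intersection number. Thus $[0,\epsilon(L,Z))\subseteq\Lambda_Z(L)$, and since both $x\mapsto\vol_{\hat{X}}(\pi^*L-xE)$ and $x\mapsto(\pi^*L-xE)^n$ are continuous, $\epsilon(L,Z)\in\Lambda_Z(L)$ as well; hence $[0,\epsilon(L,Z)]\subseteq\Lambda_Z(L)$, which gives $\max\{\cdots\}\geq\epsilon(L,Z)$.

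For the reverse inequality, I would first observe that $\Lambda_Z(L)$ is closed (it is the zero set of the continuous function $x\mapsto\vol_{\hat{X}}(\pi^*L-xE)-(\pi^*L-xE)^n$) and bounded, because the computation in the proof of Proposition \ref{prop:Ntrivial} gives $(\pi^*L-xE)^n=L^n-\binom{n}{r}d\,x^r$, which tends to $-\infty$, whereas the volume stays nonnegative. Hence the maximum $t_0:=\max\{t\geq 0:[0,t]\subseteq\Lambda_Z(L)\}$ is attained and $[0,t_0]\subseteq\Lambda_Z(L)$. By the inequality also established in that proof, $\vol_{\hat{X}}(\pi^*L-xE)\geq L^n-\binom{n}{r}d\,x^r$ for every $x\geq 0$; combining this with $x\in\Lambda_Z(L)$ yields $\vol_{\hat{X}}(\pi^*L-xE)=L^n-\binom{n}{r}d\,x^r$ for all $x\in[0,t_0]$. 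Since this polynomial is strictly decreasing on $[0,\infty)$ (as $d=(-K_X)^{n-r}\cdot Z>0$) while the volume is $\geq 0$, the polynomial must be strictly positive on $[0,t_0)$; in particular $\pi^*L-xE$ is big for all $x\in[0,t_0)$.

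The crucial input is then the fact that a big $\mathbb{R}$-divisor $D$ on a normal projective variety satisfies $\vol(D)=D^n$ if and only if $D$ is nef (on surfaces this is immediate from the Zariski decomposition; in general it rests on the theory of positive intersection products and Fujita approximation, and should be the content of the recorded Fact \ref{fact-vol}). Applying it to $D=\pi^*L-xE$ for $x\in[0,t_0)$ shows that $\pi^*L-xE$ is nef for all such $x$, and hence $\pi^*L-t_0E=\lim_{x\to t_0^-}(\pi^*L-xE)$ is nef as well, the nef cone being closed. On the other hand, along the ray $\{\pi^*L-xE:x\geq 0\}$ the ample locus is exactly the open interval $(0,\epsilon(L,Z))$: it is nonempty because Seshadri constants of smooth subvarieties are positive, and it misses $0$ because $\pi^*L$ is not ample on $\hat{X}$; so by convexity of the nef cone $\pi^*L-xE$ cannot be nef for any $x>\epsilon(L,Z)$. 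Therefore $t_0\leq\epsilon(L,Z)$, and combined with the previous paragraph this finishes the proof.

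The main obstacle is isolating and justifying the fact that $\vol(D)=D^n$ detects nefness among big divisors: this is the only non-formal ingredient, and it is needed in the normal (not necessarily smooth) setting, since $\bl_Z X$ may be singular when $X$ is merely a $\mathbb{Q}$-Fano variety. Once this is granted, the remaining steps — reducing to the big case via the volume inequality of Proposition \ref{prop:Ntrivial}, passing to the limit $x\to t_0^-$, and matching the nef threshold of the ray with the Seshadri constant — are elementary, though one should be careful that the relevant supremum is genuinely attained and that the auxiliary polynomial is strictly monotone.
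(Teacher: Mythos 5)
Your first inequality $\epsilon(L,Z)\leq\gamma$ matches the paper exactly. For the reverse inequality, however, you take a genuinely different route, and that route has a gap.

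The paper, following \cite[Theorem 2.3(2)]{Fuj18}, writes $\pi^*L-(\gamma-\eta)E$ as the sum of a small ample class and a positive multiple of $\pi^*L-x_tE$, and then proves ampleness by showing $h^i(\hat X, k(\pi^*L-x_tE))=o(k^n)$ for all $i\geq 1$ — estimating $h^1$ via the volume computation and killing the higher $h^i$ by the triviality of $N_{Z/X}$ — so that the de Fernex–K\"uronya–Lazarsfeld ampleness criterion (\cite[Theorem A]{dFKL07}) applies. This is the whole technical content of the paper's argument. Your proposal instead rests entirely on the assertion that a big $\IR$-divisor $D$ satisfies $\vol(D)=D^n$ if and only if $D$ is nef. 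You correctly isolate this as the ``only non-formal ingredient,'' but you do not prove it, and you misattribute it: Fact~\ref{fact-vol} in the paper is the parity statement that $(-K_X)^n$ is even when $\dim X$ is odd, which has nothing to do with volume-versus-self-intersection. As it stands, the step where you pass from ``$\vol(\pi^*L-xE)=(\pi^*L-xE)^n$ for $x\in[0,t_0)$'' to ``$\pi^*L-xE$ is nef for $x\in[0,t_0)$'' is unjustified. This equivalence is true on surfaces by Zariski decomposition, as you note, but in higher dimensions there is no general inequality between $\vol(D)$ and $D^n$ for big $D$ (both $\vol>D^n$ and $\vol<D^n$ occur off the nef cone), and the equality characterization is not a standard citable fact at the level of the rest of the argument; the fact that Fujita resorts to the dFKL machinery rather than invoking such a statement is itself evidence that it is not readily available. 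If you do want to pursue your route, you would need to supply a proof or a precise reference for this equivalence (e.g.\ via differentiability of the volume function and the orthogonality estimate in the positive-intersection-product framework), rather than gesturing at an unrelated Fact.

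Two smaller points. First, your caution about the ``normal, not necessarily smooth'' setting is unnecessary here: the Lemma is stated under the hypotheses of Proposition~\ref{prop:Ntrivial}, where $X$ is a Fano \emph{manifold} and $Z$ is nonsingular, so $\hat X=\bl_ZX$ is smooth. Second, the continuity/attainment bookkeeping you do (closedness of $\Lambda_Z(L)$, strict positivity of $L^n-\binom{n}{r}dx^r$ on $[0,t_0)$, passage to the limit $x\to t_0^-$ in the closed nef cone, and the identification of the nef threshold of the ray with $\epsilon(L,Z)$) is all fine and is consistent with what the paper does implicitly.
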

\begin{proof}
   The argument is similar to the proof of \cite[Theorem 2.3(2)]{Fuj18}. We denote 
   \begin{equation}\label{eq-gamZ}
   \gamma=\gamma_Z(L)\coloneqq \max\{t\in \IR_{\gee 0}\mid x\in \Lambda_Z(L) \,\, \text{for all}\, x\in[0,t]\}.
   \end{equation}
   When $0\lee t \lee \epsilon(L,Z)$, we have $\pi^*L-tE$ is nef, then $\vol_{\hat{X}}(\pi^*L-tE)=(\pi^*L-tE)^n$. Therefore, $\epsilon(L,Z)\lee \gamma$. In particular, $\gamma>0$. Now, in order to show $\epsilon(L,Z)\gee \gamma$, it suffices to show for any $\eta>0$ sufficiently small such that $\gamma-\eta\in \IQ_{>0}$ and $\pi^*L-(\gamma-\eta)E$ is ample. Fix an $\delta\in \IQ_{>0}$ such that $\pi^*L-\delta E$ is ample. Take any $t\in \IQ_{>0}$ satisfying $t\lee \min\{1,(\gamma-\eta)/\delta,\,\eta/(\gamma-\delta)\}$. Then,
    \begin{eqnarray*}
        \left(\pi^*L-(\gamma -\eta)E\right) - t(\pi^*L-\delta E) = (1-t) \left(\pi^*L-\frac{\gamma-\eta-t\delta}{1-t}E\right).
    \end{eqnarray*}
    We set $x_t\coloneqq (\gamma-\eta -t\delta)/(1-t)$. We take sufficiently large $k\in \IN^*$ such that $kx_t\in \IZ_{>0}$. From Kodaira's vanishing theorem, we have $H^i(X,kL)=0$ for $i\gee 1$. Then the exact sequence is given by
\begin{eqnarray*}
    0\rightarrow H^0(X,kL\otimes\CI_Z^{kx_t})\rightarrow  H^0(X,kL)\rightarrow H^0(X,kL\otimes \CO_{kx_tZ}) \rightarrow H^1(X,kL\otimes\CI_Z^{kx_t})\rightarrow 0.
\end{eqnarray*}
And $H^i(X,kL\otimes \CI_{Z}^{kx_t})=H^{i-1}(X,kL\otimes (\CO_X/\CI_Z^{kx_t}))$ for $i\gee 2$. On the one hand, since $x_t\in \Lambda_Z(L)$ and by the equation (\ref{eq-h0}),
\begin{eqnarray*}
    \mathop{\limsup}_{k\rightarrow+\infty}\frac{h^1(X,kL\otimes\CI_{Z}^{kx_t})}{k^n/n!} = -L^n+\vol_{\hat{X}}(\pi^*L-x_tE)+\mathop{\limsup}_{k\rightarrow+\infty}\frac{h^0(X,kL\otimes \CO_{kx_tZ})}{k^n/n!}=0.
\end{eqnarray*}
Then, $h^1(X,kL\otimes\CI_{Z}^{kx_t})=o(k^n)$. On the other hand, for  $j\in \IZ_{\gee 0}$, by the exact sequence
$$
0\rightarrow \CI_Z^{j}/\CI_Z^{j+1}\rightarrow \CO_X/\CI_Z^{j+1}\rightarrow \CO_X/\CI_Z^{j} \rightarrow 0.
$$
We have $H^i(X,kL\otimes \CI_Z^{j}/\CI_Z^{j+1}) = 0$ for $i\gee 1$ and $j\gee 1$ since $N_{Z/X}$ is trivial. So we get 
$$
H^i(X,kL\otimes \CO_X/\CI_Z^{j+1})\cong H^i(X,kL\otimes \CO_X/\CI_Z^{j})
$$ 
for all $i\gee 1$ and $j\gee 1$. Then, for $i\gee 2$,
\begin{eqnarray*}
    h^i(X,kL\otimes \CI_{Z}^{kx_t})=h^{i-1}(X,kL\otimes (\CO_X/\CI_Z^{kx_t}))= h^{i-1}(X,kL\otimes (\CO_X/\CI_Z))=0.
\end{eqnarray*}
And the higher direct images $R^i\pi_*\CO_{\hat{X}}=0$ for $i>0$, then by the Leray spectral sequence, we have $H^i(\hat{X},\pi^*(kL)-kx_tE) \cong H^i(X,kL\otimes \CI_Z^{xk_t})$ for $i\gee 0$. In particular, we get $h^i(\hat{X}, kL-kx_tE)=o(k^n)$ for $i\gee 1$. Then by \cite[Theorem A]{dFKL07}, we conclude that $\pi^*L-(\gamma-\eta)E$ is ample. This shows $\epsilon(L,Z)\gee \gamma$.
\end{proof}
\begin{proof}[Proof of Theorem \ref{thm-ntrivial}] 
 Since $X$ is assumed to be K-semistable, we know that $\delta(X)\ge 1$ by the valuative criterion (Theorem \ref{thm:beta}). Without loss of generality, we assume the codimension $r$ of $Z$ in $X$ is positive. By the proof of Proposition \ref{prop:Ntrivial}, the equality \eqref{eq-volprod} holds if and only if $\delta(X)=1$ and $\vol_{\hat{X}}(\pi^*(-K_X)-xE)=(\pi^*(-K_X)-xE)^n$ for all $x\in [0,r+1]$, so $\gamma_{Z}(-K_X)=r+1$ (see \eqref{eq-gamZ}). Then by Lemma \ref{lem-seshadri}, we have $\epsilon(-K_X,Z)=\gamma_Z(-K_X)=r+1$. By Proposition 
 \ref{prop-LZ}, we get $X\cong Z\times \IP^r$. 
\end{proof}
\begin{thm}\label{thm:toricsm}
     The second largest volume of $n$-dimensional K-semistable toric Fano manifold is $2n^n$ and the equality holds only if $X\cong \IP^1\times \IP^{n-1}$. 
\end{thm}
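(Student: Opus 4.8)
The plan is to combine Theorem~\ref{thm-ntrivial} with an elementary inequality comparing $\vol(\IP^r\times\IP^m)$ to $2n^n$. The only geometric input is the structural fact (\cite{CFH14,Ara06}, recalled in the introduction) that every smooth toric Fano manifold $X\not\cong\IP^n$ contains a torus-invariant, smoothly embedded projective space $Z\cong\IP^m$ with $1\le m\le n-1$ whose normal bundle is trivial, $N_{Z/X}\cong\CO_Z^{\oplus r}$ with $r=n-m$ the codimension. (Note that $\rho(X)=1$ would force $X\cong\IP^n$, so here $\rho(X)\ge 2$.)

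Granting this, the first step is immediate. Triviality of $N_{Z/X}$ and adjunction give $-K_Z=(-K_X)|_Z$, so the number $d=(-K_X)^{n-r}\cdot Z$ appearing in Theorem~\ref{thm-ntrivial} equals $(-K_{\IP^m})^m=(m+1)^m$. Theorem~\ref{thm-ntrivial} then yields
\[
(-K_X)^n\ \le\ (r+1)^r\binom{n}{r}(m+1)^m\ =\ (-K_{\IP^r\times\IP^m})^n,
\]
with equality if and only if $X\cong\IP^r\times\IP^m$.

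The second step is the combinatorial claim: for integers $r,m\ge 1$ with $r+m=n$,
\[
V(r,m):=\binom{n}{r}(r+1)^r(m+1)^m\ \le\ 2n^n,
\]
with equality exactly when $\{r,m\}=\{1,n-1\}$. I would prove it by setting $f(r)=V(r,n-r)$ on $\{1,\dots,n-1\}$, noting the symmetry $f(r)=f(n-r)$ and the boundary value $f(1)=n\cdot 2\cdot n^{n-1}=2n^n$, and computing
\[
\frac{f(r+1)}{f(r)}=\frac{\bigl(1+\tfrac{1}{r+1}\bigr)^{r+1}}{\bigl(1+\tfrac{1}{n-r}\bigr)^{n-r}}.
\]
Since $k\mapsto(1+\tfrac1k)^k$ is strictly increasing on $\IZ_{\ge1}$, $f$ strictly decreases while $r+1<n-r$ and strictly increases while $r+1>n-r$; hence its maximum on $\{1,\dots,n-1\}$ is attained only at $r=1$ and $r=n-1$, where it equals $2n^n$, and $f(r)<2n^n$ for $2\le r\le n-2$.

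Putting the two steps together gives $(-K_X)^n\le V(r,m)\le 2n^n$; if $(-K_X)^n=2n^n$ then both inequalities are equalities, forcing $X\cong\IP^r\times\IP^m$ and $\{r,m\}=\{1,n-1\}$, i.e.\ $X\cong\IP^1\times\IP^{n-1}$. Finally $\IP^1\times\IP^{n-1}$ is a toric, K-polystable (it admits a product K\"ahler--Einstein metric) Fano manifold of volume $2n^n$ not isomorphic to $\IP^n$, so the value $2n^n$ is attained and is therefore the second largest volume. The only nontrivial ingredient is the structural fact from the first paragraph; granted that, the rest is bookkeeping plus the monotonicity of $(1+1/k)^k$, so I expect the main point to lie in invoking that fact with the correct range $1\le m\le n-1$, which excludes the degenerate possibilities ($Z$ a point or $Z=X$) for which the resulting bound would be useless or vacuous.
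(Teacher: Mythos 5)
Your proof is correct and follows essentially the same route as the paper: invoke the Chen--Fu--Hwang/Araujo structural fact to produce a smoothly embedded $\IP^m$ with trivial normal bundle, apply Theorem~\ref{thm-ntrivial}, and then maximize $\binom{n}{r}(r+1)^r(n-r+1)^{n-r}$ over $1\le r\le n-1$ via the monotonicity of $(1+1/k)^k$ (this is exactly the paper's Lemma~\ref{lem-cr}). One minor improvement over the printed proof: you correctly identify the equality locus as $r=1$ or $r=n-1$ (the paper's text says ``$r=2$ or $r=n-1$,'' which is a typo), and you explicitly cite Theorem~\ref{thm-ntrivial} rather than only Proposition~\ref{prop:Ntrivial}, which is needed to pass from the volume equality to $X\cong\IP^1\times\IP^{n-1}$.
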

\begin{proof}
    When $X$ is a smooth toric Fano manifold, by the work of Chen-Fu-Hwang \cite[Corollary 2.5]{CFH14} which is partly based on the work of Araujo \cite{Ara06}, we know that there exists a submanifold $Z \cong \IP^{n-r}$ for $0\le r\le n-1$ that is contained in $X$ with a trivial normal bundle. By Proposition \ref{prop:Ntrivial} and $\delta(X)\gee 1$, we have $(-K_X)^n\lee (-K_{\IP^r\times Z})^n = (-K_{\IP^r\times \IP^{n-r}})^n=\binom{n}{r}\cdot (r+1)^r\cdot (n-r+1)^{n-r}=: c_r$. By the following lemma \ref{lem-cr}, for $1\le r\le n-1$,  $c_r\le 2n^n$ with equality holds if and only if $r=1$ or $r=n-1$ and in both cases $X\cong \IP^1\times \IP^{n-1}$. 
\end{proof}

\begin{lem}\label{lem-cr}
The sequence $c_{r}:=\binom{n}{r}(r+1)^r (n-r+1)^{n-r}$
with $0\le r\le n$ satisfies $c_r=c_{n-r}$ and $c_0=(n+1)^n>c_1=2n^n>c_2>\cdots> c_{\lfloor n/2\rfloor}$. 
\end{lem}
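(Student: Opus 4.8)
The plan is to reduce the entire statement to the strict monotonicity of the classical sequence $g(m):=(1+1/m)^m$ on the positive integers (which increases to $e$). First, the symmetry $c_r=c_{n-r}$ is immediate: $\binom{n}{r}=\binom{n}{n-r}$, and replacing $r$ by $n-r$ merely interchanges the two factors $(r+1)^r$ and $(n-r+1)^{n-r}$. The two evaluations are direct substitutions, $c_0=\binom{n}{0}\cdot 1^0\cdot(n+1)^n=(n+1)^n$ and $c_1=\binom{n}{1}\cdot 2^1\cdot n^{n-1}=2n^n$.

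Next I would compute the ratio of consecutive terms. Using $\binom{n}{r+1}/\binom{n}{r}=(n-r)/(r+1)$ and cancelling, one obtains
\begin{equation*}
\frac{c_{r+1}}{c_r}=\left(\frac{r+2}{r+1}\right)^{r+1}\left(\frac{n-r}{n-r+1}\right)^{n-r}=\frac{g(r+1)}{g(n-r)} ,
\end{equation*}
since $\big(\tfrac{r+2}{r+1}\big)^{r+1}=g(r+1)$ and $\big(\tfrac{n-r}{n-r+1}\big)^{n-r}=1/g(n-r)$. Thus $c_{r+1}<c_r$ is equivalent to $g(r+1)<g(n-r)$, which by strict monotonicity of $g$ amounts to $r+1<n-r$, i.e. $2r+1<n$.

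For the monotonicity of $g$ I would use the shortest self-contained route: apply the AM--GM inequality to the $m+1$ positive reals consisting of $m$ copies of $1+\tfrac1m$ together with one copy of $1$. Their arithmetic mean is $\tfrac{m+2}{m+1}=1+\tfrac1{m+1}$, whence $\big(1+\tfrac1{m+1}\big)^{m+1}\ge\big(1+\tfrac1m\big)^m$, with strict inequality because the reals are not all equal. Then, for $0\le r\le \lfloor n/2\rfloor-1$ one has $2r+1<n$ (check $n=2k$: $r\le k-1$ gives $2r+1\le 2k-1<n$; and $n=2k+1$: $r\le k-1$ gives $2r+1\le 2k-1<n$), so $c_{r+1}<c_r$; chaining these for $r=0,1,\dots,\lfloor n/2\rfloor-1$ yields $c_0>c_1>\cdots>c_{\lfloor n/2\rfloor}$, with the case $r=0$ (i.e. $(1+1/n)^n>2$) requiring only $n\ge 2$.

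This lemma has no genuine obstacle: the single nontrivial input is the classical fact that $(1+1/m)^m$ is strictly increasing, and everything else is routine manipulation of binomial coefficients. If one preferred to bypass AM--GM, the inequality $c_{r+1}<c_r$ could instead be checked by taking logarithms and invoking the concavity estimate $(n-r)\ln(1+\tfrac1{n-r})>(r+1)\ln(1+\tfrac1{r+1})$, but the AM--GM argument is cleaner.
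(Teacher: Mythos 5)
Your proof is correct and follows the same route as the paper: the symmetry and the values of $c_0,c_1$ by direct substitution, and the strict decrease from the ratio identity $c_{r+1}/c_r = g(r+1)/g(n-r)$ together with the strict monotonicity of $g(m)=(1+1/m)^m$. The only addition is your self-contained AM--GM proof that $g$ is strictly increasing, which the paper takes as a well-known fact; this is a harmless elaboration, not a different method.
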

\begin{proof}
It is clear that $c_r=c_{n-r}$. We calculate:
\begin{eqnarray*}
   \frac{c_{r+1}}{c_r}&=&\left(\frac{r+2}{r+1}\right)^{r+1}\cdot \left(\frac{n-r}{n-r+1}\right)^{n-r}=a_{r+1}/a_{n-r}
\end{eqnarray*}
where $a_k=(\frac{k+1}{k})^k=(1+k^{-1})^k$ is an strict increasing sequence for $k\ge 1$ (that converges to $e$ as $k\rightarrow +\infty$). So we get $c_{r+1}/{c_r}<1$ when $r+1<n-r$ (or equivalently $2r+1<n)$.  The statement then follows easily. 
\end{proof}

\begin{prop}\label{prop:singulartoric}
    Let $X$ be a singular $n$-dimensional K-semistable toric $\IQ$-Fano variety. When $n=2$, then $(-K_X)^2\lee \frac{9}{2}$. When $n\gee 3$, then $(-K_X)^n \lee \frac{16}{27}(n+1)^n$. In particular, we have $(-K_X)^n< (-K_{\IP^1\times \IP^n})^n = 2n^n$ holds for all positive integer $n \gee 2$.
\end{prop}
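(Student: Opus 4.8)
The plan is to reduce the estimate to a statement about the normalized volume of a singular point of $X$ and then invoke Liu's local-to-global volume comparison. The first step is to observe that a singular toric $\IQ$-Fano variety $X$ always carries a torus-fixed singular point: its defining fan is complete (since $X$ is projective), and if every maximal cone---necessarily $n$-dimensional---were smooth, then $X$, being covered by the corresponding charts $\IA^n$, would itself be smooth; so some maximal cone $\sigma$ is non-smooth, and then the torus-fixed point $x=x_\sigma\in U_\sigma$ lies in the singular locus of $X$ (a standard fact in toric geometry). As $X$ is $\IQ$-Fano it is klt, so $x\in X$ is an $n$-dimensional klt toric singularity.

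The second step is to bound $\hvol(x,X)$ from above, and this is exactly where the dichotomy $n=2$ versus $n\ge 3$ in the statement originates. For $n=2$, every klt surface singularity is a quotient singularity $\IC^2/G$ with $G\le\mathrm{GL}_2(\IC)$ a finite small subgroup, and $\hvol(x,X)=4/|G|$; since $x$ is singular, $|G|\ge 2$, whence $\hvol(x,X)\le 2$. For $n\ge 3$, I would apply Moraga-Süß's solution of the ODP conjecture in the toric category (\cite{MS24}), together with their determination of the extremal non-smooth toric klt singularity: one gets $\hvol(x,X)\le\tfrac{16}{27}n^n$, the bound being attained by the product of the $3$-dimensional ordinary double point $\{xy=zw\}$ with $\IA^{n-3}$. (For $n=3$ this maximum is $16=2(n-1)^n$, the ordinary double point itself, while for $n\ge 4$ the ordinary double point is no longer a toric singularity, so the optimal toric value falls strictly below the universal ODP bound $2(n-1)^n$ and produces the better constant $\tfrac{16}{27}$.)

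The third step combines this with Liu's inequality (\cite{Liu18}; see Remark \ref{rem-sing}): for a K-semistable $\IQ$-Fano variety $X$ of dimension $n$ and any closed point $x\in X$, $(-K_X)^n\le\big(\tfrac{n+1}{n}\big)^n\hvol(x,X)$. Substituting the local bounds yields $(-K_X)^2\le\big(\tfrac32\big)^2\cdot 2=\tfrac92$ when $n=2$, and $(-K_X)^n\le\big(\tfrac{n+1}{n}\big)^n\cdot\tfrac{16}{27}n^n=\tfrac{16}{27}(n+1)^n$ when $n\ge 3$. The closing ``in particular'' claim is then an elementary inequality: $\tfrac92<8=2\cdot 2^2$, and for $n\ge 3$ one has $\tfrac{16}{27}(n+1)^n<2n^n\iff 8\big(1+\tfrac1n\big)^n<27$, which holds because $\big(1+\tfrac1n\big)^n<e$ and $8e<27$.

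The toric bookkeeping of the first step, the citation of Liu's inequality, and the final numerical check are all routine; the genuine obstacle is the local input of the second step. Everything hinges on Moraga-Süß's resolution of the toric ODP conjecture and on their identification, in each dimension, of the non-smooth toric klt singularity of maximal normalized volume. If one only used the ODP conjecture in its bare form $\hvol(x,X)\le 2(n-1)^n$, one would still obtain $(-K_X)^n\le\big(\tfrac{n+1}{n}\big)^n2(n-1)^n<2n^n$, which already establishes the ``in particular'' statement, but not the sharper bounds $\tfrac92$ and $\tfrac{16}{27}(n+1)^n$ displayed in the proposition.
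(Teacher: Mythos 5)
Your proof is correct and follows essentially the same route as the paper: a singular toric $\IQ$-Fano variety has a torus-fixed singular point, one applies Moraga--Süß's bound $\hvol(x,X)\le \tfrac{16}{27}n^n$ (resp.\ $\le 2$ for $n=2$) on the normalized volume of a singular toric klt point, and then Liu's local-to-global comparison $(-K_X)^n\le\left(\tfrac{n+1}{n}\right)^n\hvol(x,X)$ together with the elementary estimate $8e<27$. The only minor deviation is that for $n=2$ you derive the local bound from the quotient-singularity description $\hvol=4/|G|$ rather than citing \cite{MS24} directly, but this is equivalent.
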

\begin{proof}
    Suppose $x\in X$ lies in the singular locus and $(X,x)$ is a toric singularity. It was proved in \cite[Theorem 2]{MS24} that if $(X,x)$ is an $n$-dimensional $\IQ$-Gorenstein toric singularity, then 
    \begin{enumerate}
        \item If $n= 2$, then $\hvol(X,x)\lee 2$;
        \item If $n\geq 3$, then $\hvol(X,x) \lee \frac{16}{27}n^n$.
    \end{enumerate}
By Liu's local-to-global volume comparison (\cite{Liu18}), we have $(-K_X)^n \lee (\frac{n+1}{n})^n\cdot \hvol(X,x)$. Then when $n=2$, $(-K_X)^2 \lee (\frac{3}{2})^2\cdot 2 = \frac{9}{2}<2\cdot 2^2 = 8 = (-K_{\IP^1\times \IP^1})^2$. When $n\gee 3$, 
$$
(-K_X)^n \lee (\frac{n+1}{n})^n\cdot \frac{16}{27}n^n  =\frac{16}{27}\cdot (n+1)^n<2n^n = (-K_{\IP^1\times \IP^{n-1}})^n.
$$
\end{proof}
\begin{thm}\label{thm:vtoric}
 The second largest volume of $n$-dimensional K-semistable toric $\IQ$-Fano varieties is $2n^n$ and it is obtained only if $X\cong \IP^1\times \IP^{n-1}$.    
\end{thm}
\begin{proof}
    When $X$ is a smooth toric Fano manifold, this follows from Theorem \ref{thm:toricsm}. When $X$ is a strictly singular $\IQ$-Fano toric variety, this follows from Proposition \ref{prop:singulartoric}. When the equality $(-K_X)^n = 2n^n$ holds, from Proposition \ref{prop:singulartoric}, we know $X$ must be nonsingular. Then by Theorem $\ref{thm:toricsm}$ again, we know $X\cong \IP^1\times \IP^{n-1}$.
\end{proof}

\begin{rmk}\label{rem-sing}
    We remark that the singular version of Theorem \ref{thm-main} (that is, consider all $\IQ$-Fano varieties, not just smooth Fano manifolds) follows from ODP conjecture (Conjecture \ref{conj:ODP}). Let $X$ be a K-semistable $\IQ$-Fano variety and $x\in X$ be a singular point. Assume the ODP conjecture holds, by the same argument of Proposition \ref{prop:singulartoric}, we have $(-K_X)^n \lee (\frac{n+1}{n})^n\cdot \hvol(X,x)\lee (\frac{n+1}{n})^n\cdot 2(n-1)^n=(\frac{n+1}{n})^n\cdot (\frac{n-1}{n})^n\cdot 2n^n<2n^n$. Since the inequality is strict, it is expected that the equality of Theorem \ref{conj:2ndVol}  would hold only if $X$ is a smooth Fano manifold.
\end{rmk}
As mentioned in the Introduction, Theorem \ref{thm:vtoric} finds an application in Arakelov geometry on the sharp bound of height of arithmetic Fano varieties whose complexification is K-semistable. We refer our readers to \cite{AB24, Ber25} for more detailed explanation and possible arithmetic applications.
\begin{thm}
    Andreasson-Berman's conjecture holds for the toric canonical model of a toric Fano varieties. Namely, for an $n$-dimensional K-semistable toric Fano variety and let $\CX$ be the canonical model of $X$ over $\IZ$, then for any volume-normalized continous metric $\left\Vert \cdot\right\Vert$ on $-K_X$ with positive curvature current, we have $h_{\left\Vert \cdot\right\Vert}(\CX,-K_{\CX})\leq h_{\left\Vert \cdot\right\Vert_{\text{FS}}}(\IP^n_{\IZ},-K_{\IP^n_{\IZ}})$ where $-K_{\IP^n_{\IC}}$ is endowed with the volume normalized Fubini-Study metric. 
\end{thm}
\begin{proof}
    It follows directly from Theorem \ref{thm:vtoric} and \cite[Lemma 3.8]{AB24}.
\end{proof}

We conclude this section with another application in convex geometry. 
\begin{cor}\label{cor:polytope}
    Suppose $P\subseteq \IR^n$ is an $n$-dimensional reflexive lattice polytope with its barycenter at $0\in\IR^n$. Assume that $P$ is not unimodularly equivalent to $(n+1)$ times a standard simplex $(n+1)\Delta_n$, then the volume of $P$ with respect to the Lebesgue measure in $\IR^n$ satisfies $\vol_{\IR^n}(P)\lee 2n^n/n!$ and the equality holds if and only if $P$ is unimodularly equivalent to $[0,2]\times (n\Delta_{n-1})$.
\end{cor}
\begin{proof}
    Here $\Delta_n$ denote the standard $n$-dimensional simplex with vertices $0, e_1,\cdots,e_n$ where $e_i$ are standard lattice basis of $\IZ^n$ for $i=1,\cdots,n$. And two convex bodies $P$ and $Q$ in $\IR^n$ are called unimodularly equivalent if there exists an affine lattice automorphism of $\IZ^n$ mapping $P$ onto $Q$. Let $X_{P}$ be the projective toric variety associated to a reflexive polytope $P$. Then $X_P$ is a Gorenstein Fano variety (\cite[Theorem 8.3.4]{CLS11}). By the result of \cite{WZ04,SZ12,BB13}, we know that the barycenter of $P$ is zero if and only if $X_P$ is K-semistable. Since $P$ is not unimodularly equivalent to $(n+1)\Delta_n$, we have $X_P\ncong \IP^n$, then Theorem \ref{thm:vtoric} immediately implies $(-K_{X_P})^n\lee 2n^n$ and the equality holds only if $X_P\cong \IP^1\times \IP^{n-1}$. Then the reflexive polytope $P$ satisfies $\vol_{\IR^n}(P)\lee 2n^n/n!$ and $P\cong [0,2]\times (n\Delta_{n-1})$ up to unimodularly equivalent.
\end{proof}

\section{Volume estimates and minimal rational curves}\label{sec-main}

The calculation in the previous section is based on the test of K-stability via standard blow-ups. In this section, we prove Theorem \ref{thm-main} by carrying out a test of K-stability via a particular weighted blow-ups along minimal rational curves (see Example \eqref{ex-quadric} for the motivating example).
We also need to overcome the difficulty of possible singularities on the rational curve.

\subsection{Two divisorial valuations: $v_1$ and $v_2$ }\label{sub-coord}

Let $f: \IP^1\rightarrow X$ be a  
minimal rational curve. In other words, it is
an immersed rational curve that satisfies $f^*TX=\CO(2)\oplus\CO(1)^{\oplus (d-2)}\oplus \CO^{\oplus(n-d+1)}$ with $2\le d\le n+1$ and $d=f^*(-K_X)\cdot \IP^1=(-K_X)\cdot f(\IP^1)$. The (relative) normal bundle has the splitting: 
\begin{equation}\label{eq-split}
N_{f/X}=f^*TX/T\IP^1=\CO(1)^{\oplus (d-2)}\oplus\CO^{\oplus (n-d+1)}. 
\end{equation}
Correspondingly, we have the splitting of the conormal bundle:
\begin{equation*}
    N^\vee_{f/X}=\CO(-1)^{\oplus (d-2)}\oplus \CO^{\oplus(n-d+1)}. 
\end{equation*}
Set $Z=f(\IP^1)$ which is an irreducible curve with possibly a finite number of singularities $Z^{\mathrm{sing}}=\{p_1, \dots, p_k\}$ and $f: \IP^1\rightarrow Z$ is the normalization of $Z$. 
Set $f^{-1}(p_i)=\{t_i^j; 1\le j\le m_i \}$ to be the inverse image of the singularity $p_i$ under $f$ so that $f^{-1}(Z^{\mathrm{sing}})=\cup_i f^{-1}(p_i)$. 

For any point $t\in \IP^1$, we first construct a complex analytic coordinate chart near $f(t)\in X$ in the following way, which is adapted to the splitting \eqref{eq-split}. 
Choose a disk $B_t:=B_t(\epsilon)\subset \IP^1$ of radius $\epsilon>0$ (with respect to the round spherical metric) centered at $t$, such that $f: B_t\rightarrow X$ is an embedding. Since $f: \IP^1\rightarrow X$ is an immersion and $f^{-1}(Z^{\mathrm{sing}})$ is a finite set, it is easy to see that we can choose $\epsilon$ sufficiently small such that $B_t$ exists for all $t\in \IP^1$. Moreover, by choosing a uniform $0<\epsilon\ll 1$, we can choose an open neighborhood $U=U_t$ of $f(t)$ and complex analytic coordinates $\bz=\bz_t:=\{z_1, \dots, z_n\}$ on $U$ such that 
\begin{enumerate}
    \item $z_i(f(t))=0$ for $i=1,\dots, n$; $\bz(U_t)=\{|z_i|<\epsilon; i=1, \dots, n\}$; 
    \item 
    $f(B_t)\cap U_t=\{z_1=\cdots=z_{n-1}=0\}$;
    \item For any $1\le i\le d-2$, $f^*\dif z_i$ is a local generator of the $i$-th summand of $\CO(-1)^{\oplus(d-2)}\hookrightarrow N^{\vee}_{f/X}$ over $B_t$;
    \item For any $d-1\le j\le n-1$, $f^*\dif z_j$ is a local generator of the $j$-th summand of $\CO^{\oplus(n-d+1)}\hookrightarrow N^{\vee}_{f/X}$ over $B_t$.    
\end{enumerate}
We now define a divisorial valuation $v=\ord_E$ whose center over $X$ is $Z$. Pick any rational function $\psi$ on $X$.
We assume that $\psi$ is defined on a Zariski open set $W$ such that $W\cap Z\neq \emptyset$.
Pick any point $t\in f^{-1}(W\cap Z)$. 
The restriction $\psi$ to $U=U_t$ becomes holomorphic near $0\in U$. Expand the restriction $f|U$ near $0\in U$:
\begin{equation}\label{eq-psiex}
    \psi(\bz)=\sum_{I,J} {b_{IJ}}(z_n) \bz'^I \bz''^J
\end{equation}
where $\bz'=\{z_1, \dots, z_{d-2}\}$ (resp. $\bz''=\{z_{d-1},\cdots, z_{n-1}\}$), and $\bz'^I=z_1^{i_1}\cdots z_{d-2}^{i_{d-2}}$ for $I=(i_1, \dots, i_{d-2})\in \IN^{d-2}$ with $|I|=i_1+\cdots+i_{d-2}$ (resp. $\bz''^{J}=z_{d-1}^{j_1}\cdots z_{n-1}^{j_{n-d+1}}$ for $J=(j_1,\cdots,j_{n-d+1})\in \IN^{n-d+1}$ with $|J|=j_1+\cdots+j_{n-d+1}$). 
We then define: for $\ell=1$ or $\ell=2$
\begin{equation}\label{eq-vpsi}
    v_\ell(\psi)=\min\{|I|+\ell |J|; \psi=\sum_{I,J} {b_{IJ}}\bz'^I \bz''^J, b_{IJ}=b_{IJ}(z_n)\not\equiv 0\}.
\end{equation}
We claim that this is independent of $t\in f^{-1}(W\cap Z)$. Since $Z\setminus W$ is a finite set, $W\cap Z$ is connected. So we just need to verify the claim locally. Assume that $t'$ is near $t$ so that $x'=f(t')$ is near $x=f(t)$ and we
can find a sequence of points $\{t_i; i=1, \dots, N\}$ and small discs $B_{t_i}(\epsilon_1)\subset B_{t_i}(\epsilon)=B_{t_i}$ with $0<\epsilon_1\ll \epsilon$ such that $t_1=t,\,t_N=t'$ and $B_{t_i}(\epsilon_1)\cap B_{t_{i+1}}(\epsilon_1)\supset B_{t_i}(\epsilon_1/3)\cup B_{t_{i+1}}(\epsilon_1/3)$. In particular, this implies
such that $U_{t_i}\cap U_{t_{i+1}}\neq \emptyset$. 
For each $1\le i\le N-1$, the transition function on $U_{t_i}\cap U_{t_{i+1}}$ from $(U_{t_i}, \bz=\bz_{t_i})$ and $(U_{t_{i+1}}, \hat{\bz}:=\bz_{t_{i+1}})$ is of the form:
\begin{eqnarray}\label{eq-trfct}
    z_k&=&a_k(\hat{z}_n) \hat{z}_k+R_2, 1\le k\le n-1, \quad z_n=g_n(\hat{z}_n)+R_1 
\end{eqnarray}
where $a_k(\hat{z}_n)$ is a non-zero holomorphic transition function of $\CO_{\IP^1}(-1)$ for $1\le k\le d-2$ and of $\CO_{\IP^1}$ for $d-1\le k\le n-1$, $g_n(\hat{z}_n)$ is an invertible holomorphic function and $R_2$ (resp. $R_1$) are holomorphic functions whose expansion consists only of terms of degree at least 2 (resp. at least 1) in $\hat{z}_1, \dots, \hat{z}_{n-1}$. 
Assume $\psi|_{U_{t_i}}$ has the expansion \eqref{eq-psiex} centered at $t_i\in U_{t_i}$ with all terms of weighted degree $|I|+\ell|J|$ at least $y$, i.e. the right-hand-side of \eqref{eq-vpsi} is equal to $y$. 
Then
\begin{eqnarray*}
    \psi(\bz)
    &=&\sum_{|I|+\ell |J|\ge y} b_{IJ}(g_n(\hat{z}_n)+R_1) (\mathbf{a}'(\hat{z}_n)\hat{\bz}'+R_2)^I (\mathbf{a}''(\hat{z}_n) \hat{\bz}''+R_2)^J\\
    &=&\sum_{|I|+\ell |J|=y} b_{IJ}(g_n(\hat{z}_n))(\mathbf{a}'(\hat{z}_n)\hat{\bz}')^I (\mathbf{a}''(\hat{z}_n)\hat{\bz}'')^J+\sum_{|I|+\ell|J|=y}b_{IJ}(g_n(\hat{z}_n))(\mathbf{a}'(\hat{z}_n)\hat{\bz}')^I  R'^J_2+R^w_{y+1}\\
   &=& \sum_{K,L}\hat{b}_{KL}(\hat{z}_n) \hat{\bz}'^K\hat{\bz}''^L
\end{eqnarray*}
where $(\mathbf{a}'\hat{\mathbf{z}}')^I=\prod_{k=1}^{d-2} a_k^{i_k}z_k^{i_k}$,
$(\mathbf{a}''\hat{\mathbf{z}}'')^J=\prod_{k=d-1}^{n-1} a_k^{j_k}z_k^{j_k}$, $R'_2$ consists of terms of degree at least $2$ in $(\hat{z}_1,\dots, \hat{z}_{d-2})$, $R^w_{y+1}$ consists of terms of weighted degree $|I|+\ell |J|$ at least $y+1$ and the last expression is the expansion of $f|U_{t_{i+1}}$ near the center $t_{i+1}\in U_{t_{i+1}}$. 
It is then easy to see that for $\ell=1$ or $\ell=2$,
\begin{equation*}
\min\{|K|+\ell |L|; \psi=\sum_{K, L}\hat{b}_{KL}(\hat{z}_n) \hat{\bz}'^K \hat{\bz}''^L, \hat{b}_{KL}\not\equiv 0\}
\end{equation*}
is at least the quantity on the right-hand-side of \eqref{eq-vpsi}. Since we can switch the role of $t_i$ and $t_{i+1}$, we know that the two quantities are actually equal. By passing from $t_1=t$ to $t_N=t'$, we conclude that for $\ell=1,2$, $v_\ell(f)$ does not depend on $t$ and $t'$. 

\begin{rmk}
    Definition of quasi-monomial valuations by using analytic expansions under complex analytic coordinates has also recently appeared in \cite[2.3.1]{MP24}. In particular, a similar discussion about the independence of local coordinate charts is contained in \cite[Proposition 2.3.4]{MP24}.
\end{rmk}

 When restricted to each $U=U_{t}$, the valuation $v_\ell$ corresponds to the exceptional divisor $E$ of the weighted blowup of $U_t$ along $f(B_t)\cap U_t$ with weights 
 \begin{equation*}
     (1^{\oplus (d-2)}, \ell^{\oplus (n-d+1)})=
     \left\{
     \begin{array}{ll}
     (\underbrace{1, \dots, 1}_{n-1})& \text{for } \ell=1, \\
     (\underbrace{1, \dots, 1}_{d-2}, \underbrace{2, \cdots, 2}_{n-d+1}) & \text{for } \ell=2
     \end{array}
     \right.
 \end{equation*}(See \cite{QR22} for more a general set-up of weighted blowups). In fact, for $\ell=1$ the valuation $v_1$ corresponds to the symbolic blowup of $X$ along $Z$.  

Fix $\ell\in \{1,2\}$ and set $\CI_{\lambda}=\CI_{\lambda}(v_\ell)=\{f\in \CO_X; v_\ell(f)\ge \lambda\}$. Choose $\lambda=x k$ for $x\in \IQ_{>0}$. Then similar to the proof of Proposition \ref{prop:Ntrivial}, we have the exact sequence:
\begin{eqnarray*}
    0\rightarrow H^0(X, L^k\otimes \CI_{xk})\rightarrow H^0(X, L^k)\rightarrow H^0({X}, L^k\otimes \CO_X/\CI_{xk})\rightarrow \cdots,
\end{eqnarray*}
which implies: 
$$\vol(L-xE)\ge L^n-\mathop{\limsup}_{k\rightarrow+\infty} \frac{h^0({X}, L^k\otimes \CO_X/\CI_{xk})}{k^n/n!}.$$
By using the exact sequence:
\begin{equation*}
    0\rightarrow \CI_y/\CI_{y+1}\rightarrow \CO_X/\CI_{y+1}\rightarrow \CO_X/\CI_y\rightarrow 0,
\end{equation*}
we get inductively the estimates:
\begin{eqnarray}\label{eq-tels2}
h^0(X, L^k\otimes \CO_X/\CI_{xk})
&\le&\sum_{y=0}^{xk-1}h^0(X, L^k\otimes \CI_{y}/\CI_{y+1}).
\end{eqnarray}

\subsection{Singular minimal rational curves with trivial normal bundles}
Our goal in this subsection is to prove the following gap result when there is an immersed rational curve with a trivial bundle and a singular image set. 
\begin{prop}\label{prop-deg2}
    Let $X$ be a K-semistable Fano manifold of dimension $n\ge 3$. 
    Assume that $f: \IP^1\rightarrow X$ is an immersed rational curve with a non-empty subset of singular points such that $f^*TX=\CO(2)\oplus \CO^{n-1}$. Then we have an estimate of the volume $(-K_X)^n< 2n^n$.
    As a consequence, if there exists a minimal rational curve with a trivial normal bundle on $X$ and $(-K_X)^n=2n^n$, then the minimal rational curve has no singular points and $X$ must be isomorphic to $\IP^1\times \IP^{n-1}$.  
\end{prop}

The rest of this subsection is devoted to proving the above proposition. The last statement follows from the strict volume inequality and Theorem \ref{thm-ntrivial}. So we just need to prove the inequality $(-K_X)^n< 2n^n$ when $f$ is not an embedding. We use the same notation as the previous subsection but now specialize to the case when $\ell=1$ and $d=2$.

For any $[s]\in H^0(X, L^k\otimes \CI_y/\CI_{y+1})$, choose a local generator $\sigma_t$ of $L^k$ over $U_t$, the section $[s]$ is represented uniquely over $U_t$ by a holomorphic function with the following expression (note that, since $d=2$, we get $d-2=0$ so that $(z_1,\dots, z_{n-1}, z_n)=(\bz'', z_n)$): 
\begin{eqnarray*}
    \frac{s_t}{\sigma_t}&=&\sum_{|J|=y}b^t_{J}(z_n) \bz''^J
\end{eqnarray*}
which we call the initial term representation of $s$ over $U_t$. 
Note that the superscript $t$ in $b^t_J$ means the dependence on $t$ (and not the transpose). 
Set
\begin{equation*}    \gamma_t([s])=\sigma_t\cdot \sum_{|J|=y}b^t_{J}(z_n) (\dif\bz'')^{\otimes J}
\end{equation*}
which defines a holomorphic section of $f^*L^k\otimes \mathrm{Sym}^{y}(\CO^{\oplus (n-1)})$ over $B_t$. It is easy to check that $\gamma_t([s])$ patches together to define a global section $$\gamma([s])\in H^0(\IP^1, f^*L^k\otimes \mathrm{Sym}^y (\CO^{\oplus n-1}))= H^0(\IP^1, f^*L^k)\otimes \IC^{\binom{n+y-2}{y}}.
$$
In other words, we know that for each $J=(j_1,\dots, j_{n-1})$ with $|J|=j_1+\cdots+j_{n-1}=y$, 
$b_J^t(z_n)(\dif\bz'')^{\otimes J}$ patches together to become a section $s_J$ in $H^0(\IP^1, f^*L^k)$. 

Now assume that $t\neq t'\in \IP^1$ satisfy $f(t)=f(t')=x\in f(\IP^1)=Z\subset X$. 
Let $C=f(B_t)$ and $C'=f(B_{t'})$ be the two distinct locally smooth branches of $Z$ that pass
through $x$. Let $(U=U_t, \bz=\bz_t)$ and $(\hat{U}=U_{t'}, \hat{\bz}=z_{t'})$ be coordinate charts centered at $t$ and $t'$ respectively in Subsection \ref{sub-coord}. 
For each $1\le k\le n$, set $m_k=\mathrm{ord}_{\hat{z}_n=0}z_k(0,\dots, 0, \hat{z}_n)\ge 0$, then we can write:
\begin{eqnarray*}    z_k&=&g_{k1}(\hat{z}_n)\hat{z}_1+\cdots+g_{k,{n-1}}(\hat{z}_n)\hat{z}_{n-1}+\hat{z}_n^{m_k} h_k(\hat{z}_n)+R_2, 
\end{eqnarray*}
where $g_{kj}(\hat{z}_n)$ and $h_k(\hat{z}_n)$ are holomorphic functions of $\hat{z}_n$ with $h_k(0)\neq 0$ if $m_k>0$ or $h_k(\hat{z}_n)\equiv 0$ if $m_k=0$, and $R_2$ consists of terms of degree at least $2$ in $\hat{z}_1, \dots, \hat{z}_{n-1}$. 
Without the loss of generality, assume that $m_1\ge m_2\ge \cdots \ge m_{n-1}\ge 0$. Then we must have $m_1>0$. Otherwise, we have $C'=\{\hat{z}_1=\cdots=\hat{z}_{n-1}=0\}\subseteq C=\{z_1=\cdots=z_{n-1}=0\}$ which is not possible. Assume that $1\le q\le n-1$ satisfies $m_{q}>0$ and $m_{q+1}=0$. If $q<n-1$, then for any $q+1\le k\le n-1$, we have $\dif z_k|_{T_xX}=\sum_{j=1}^{n-1} g_{kj}(0) \dif \hat{z}_j|_{T_xX}$ which implies that the matrix $(g_{kj}(0))^{q+1\le k\le n-1}_{1\le j\le n-1}$ has rank equal to $n-1-q$. 
For the section $s_J$ obtained from above, we get: 
\begin{eqnarray}\label{eq-bJtr}
    &&b^t_J(z_n) z_1^{j_1}z_2^{j_2}\cdots z_{n-1}^{j_{n-1}}=b^{t}_J(z_n) (\prod_{k=1}^q \hat{z}_n^{m_k j_k} h_k(\hat{z}_n)^{j_k}+R_1)\cdot \prod_{k=q+1}^{n-1}
    \left(\sum_{j=1}^{n-1} g_{kj}(\hat{z}_n) \hat{z}_j+R_2\right)^{j_k}\\
    &=&z_n^{\beta} c_J(z_n)\prod_{k=1}^q 
    (\hat{z}_n^{m_kj_k}h_k(\hat{z}_n)^{j_k}+R_1) 
    \left(\prod_{k=q+1}^n \left(\sum_{j=1}^{n-1}g_{kj}(\hat{z}_n)\hat{z}_j\right)^{j_k}+R_{{j_{q-1}}+\cdots+j_{n-1}+1}\right)\nonumber 
\end{eqnarray}
where we denote $\beta=\ord_{z_n=0}(b^t_J(z_n))\ge 0$ and $b^t_J(z_n)=z_n^\beta c_J(z_n)$ with $c_J(z_n)\not\equiv 0$. 
Note that if $q=n-1$, then the last factor should be considered as $1$. 
Also recall that for any $\theta\in \mathbb{N}_{>0}$, 
we always use $R_{\theta}$ to denote a holomorphic function of $(\hat{z}_1, \dots, \hat{z}_{n-1}, \hat{z}_n)$ whose expansion consists of terms of degree at least $\theta$ in $(\hat{z}_1,\cdots, \hat{z}_{n-1})$. For the purpose of the current argument, for $\theta\ge 0$ we now use $A_\theta$ to denote any non-zero holomorphic function of $(\hat{z}_1, \dots, \hat{z}_{n-1}, \hat{z}_n)$ whose expansion is a nonzero polynomial of degree equal to $\theta$ in $(\hat{z}_1,\dots, \hat{z}_{n-1})$. Then the right-hand-side of \eqref{eq-bJtr} has the form:
\begin{eqnarray*}\label{eq-bJ2}
    z_n^\beta (A_0+R_1)(A_0+R_1)(A_{j_{q+1}+\cdots+j_{n-1}}+R_{j_{q+1}+\cdots+j_{n-1}+1})
    &=& z_n^\beta (A_{j_{q+1}+\cdots+j_{n-1}}+R_{j_{q+1}+\cdots+j_{n-1}+1})
\end{eqnarray*}
If $q=n-1$, then the last factor should be considered as $A_0$. 

Now observe that the right-hand-side of the above formula \eqref{eq-bJtr} should be of the form $$b_{J}^{t'}(\hat{z}_n)\hat{z}_1^{j_1}\hat{z}_2^{j_2}\cdots \hat{z}_{n-1}^{j_{n-1}}+R_{|J|+1}=A_{|J|}+R_{|J|+1}$$
where $|J|=j_1+\cdots+j_{n-1}$. 
This is due to the fact that by construction $f^* \dif z_j=f^* d\hat{z}_j$ on $U_t\cap U_{t'}$ which is the restriction to $U_t\cap U_{t'}$ of a global non-zero section of the $j$-th factor of $\CO^{\oplus (n-1)}=N_{f/X}^{\vee}$.
Considering $z_n$ as a function of $(\hat{z}_1, \cdots, \hat{z}_n)$ that satisfies $\partial z_n/\partial z_i(0,\dots, 0)\neq 0$ for some $i\in \{1,\dots, n\}$, this forces $z_n=A_1+R_2$ (i.e. $\partial z_n/\partial z_k(0,\dots, 0)\neq 0$ for some $k\in \{1,\dots, n-1\}$) and  $\beta=|J|-(j_{q+1}+\cdots+j_{n-1})=j_1+\cdots+j_{q}\ge j_1$. 
So we conclude that $s_J\in H^0(\IP^1, f^*L^k\otimes \CO(-1)^{\otimes j_1}\otimes \CO^{\otimes j_2}\otimes \cdots \otimes\CO^{\otimes j_{n-1}})$, and get the estimate:
\begin{eqnarray}\label{eq-tneg}
    h^0(C, L^k\otimes \CI_y/\CI_{y+1})\le h^0(\IP^1, \CO(2k)\otimes \mathrm{Sym}^y(\CO(-1)\oplus \CO^{\oplus(n-2)})). 
\end{eqnarray}
By \eqref{eq-tels2} and \eqref{eq-tneg}, we get:
\begin{eqnarray}\label{eq-tels1}
    \sum_{y=0}^{xk-1} h^0(C, L^k\otimes \CI_y/\CI_{y+1})
    &\le& \sum_{y=0}^{xk-1} h^0(\IP^1, \CO(2k)\otimes \sym^y(\CO(-1)\oplus \CO^{\oplus(n-2)}))\nonumber \\
    &\le&  \sum_{y=0}^{xk-1}\sum_{i=0}^{\min\{y,2k\}}(2k-i+1)a_{y,i}
\end{eqnarray}
where $a_{y,i}=\binom{n-3+y-i}{n-3}$.


If $x\lee 2$, then by using the explicit expression of binomial coefficient, the right-hand-side of \eqref{eq-tels1} is equal to $b_1 k^n/n!+O(k^{n-1})$ where $b_1$ is given by the following integral (see \eqref{eq-phiint} for a derivation of a more complicated formula):
\begin{eqnarray*}
    b_1&=&n!\int_0^x \dif t\int_0^t  (2-z)\frac{(t-z)^{n-3}}{(n-3)!}\dif z
    = \frac{n!}{(n-2)!}\int_0^x (2-z)(x-z)^{n-2}\dif z\\
    &=&2n x^{n-1}- x^n<2n x^{n-1}.
\end{eqnarray*}
When $x> 2$, the right-hand-side of \eqref{eq-tels1} splits into the sum:
$\sum_{y=0}^{2k}\sum_{i=0}^{y}+\sum_{y=2k+1}^{xk-1}\sum_{i=0}^{2k}$ and the right-hand-side of \eqref{eq-tels1} is equal to $b'_1 k^n/n!+O(k^{n-1})$ with 
\begin{eqnarray*}
b'_1&=&n!\int_0^2 \dif t\int_0^t  (2-z)\frac{(t-z)^{n-3}}{(n-3)!}\dif z+n!\int_2^x \dif t \int_0^2 (2-z)\frac{(t-z)^{n-3}}{(n-3)!}\dif z\\
&=&\frac{n!}{(n-2)!}\int_0^2(2-z)(x-z)^{n-2}\dif z\\
&=&2 nx^{n-1}-x^n+(x-2)^n<(2n-1)x^{n-1}+2^{n-1}.
\end{eqnarray*}
Set $\psi(x)=2n x^{n-1}$ and 
\begin{equation}
   {\phi}(x)=
   \left\{\begin{array}{ll}
     2n x^{n-1} &  \text{if }\; 0\le x\le 2\\
    (2n-1)x^{n-1}+2^{n-1} & \text{if }\; x\ge 2, 
\end{array}
\right.
\end{equation}
so that $\phi(x)<\psi(x)$ with strict inequality when $x>2$. Moreover it is easy to check that $(\phi^{-1})'(V)\ge (\psi^{-1})'(V)$ and the strict inequality holds when $V>\phi(2)=n 2^{n}=\psi(2)$. So we can apply Lemma 
\ref{lem-cal} to conclude that $F^{\phi}(V)\ge F^{\psi}(V)$ and the strict inequality holds for $V> n 2^{n}$. By Example \ref{ex-2nn}, $F^\psi(n 2^{n})=2\frac{n-1}{n}$, which is less than $A_X(E)=n-1$ if $n\ge 3$. So we can apply 
Corollary \ref{cor-calbd} and \ref{lem-cal}.(2) to get $(-K_X)^n\le (F^{\phi})^{-1}(n-1)<(F^{\psi})^{-1}(n-1)=2n^n$. This completes the proof of Proposition \ref{prop-deg2}. 

\subsection{Minimal rational curves with degree $d\ge 3$}
In this section, we are going to prove the following result which resolves a large part of Theorem \ref{thm-main}. Recall the invariant $l_X$ from \eqref{eq-lX}, whose definition is the smallest degree of minimal rational curves on $X$. 
\begin{prop}\label{prop-3ton}
    Let $X$ be a K-semistable Fano manifold with $3\le l_X\le n-1$. Then $\vol(X)<2n^n$. 
\end{prop}
 We use the notation from subsection \ref{sub-coord} and specialize to the case when $\ell=2$ and $3\le d\le n+1$. 

For any $[s]\in H^0(X, L^k\otimes \CI_y/\CI_{y+1})$, choose a local generator $\sigma_t$ of $L^k$ over $U_t$, the section $[s]$ is represented uniquely over $U_t$ by a holomorphic function with the following expression:  
\begin{eqnarray*}
    \frac{s_t}{\sigma_t}&=&\sum_{|I|+2|J|=y}b^t_{IJ}(z_n)\bz'^I\bz''^J=\sum_{m=0}^{\lfloor y/2\rfloor}
    \sum_{\substack{|I|+|J|=y-\lfloor y/2\rfloor+m\\
    |I|=y-2\lfloor y/2\rfloor+2m\\ |J|=\lfloor y/2\rfloor-m
    }}b_{IJ}^t(z_n) \bz'^I \bz''^J
\end{eqnarray*}
which we call the initial term representation of $s$ over $U_t$. We set
\begin{equation*}
    \gamma_t([s]):=\sigma_t\cdot \sum_{\substack{|I|=y-2\lfloor y/2\rfloor \\
    |J|=\lfloor y/2 \rfloor\\   
    |I|+|J|=y-\lfloor y/2\rfloor}} b^t_{IJ}(z_n)(\dif\bz')^{\otimes I} (\dif \bz'')^{\otimes J}
\end{equation*}
which defines a local section of $f^*L^k\otimes \sym^{y-2 \lfloor y/2\rfloor}(\CO(-1)^{\oplus (d-2)})\otimes\sym^{\lfloor y/2\rfloor}(\CO^{\oplus(n-d+1)}))$ over $B_t$. 
We claim that this section is well-defined over the whole $\IP^1$, i.e. it is independent of $t\in \IP^1$. For any other $t'\in \IP^1$, choose a sequence $\{t_i; i=1, \dots, N\}$ and $B_{t_i}(\epsilon')$ as before. 
We use the transition function \eqref{eq-trfct} to get
\begin{eqnarray*}
    \frac{s_t}{\sigma_{t}}&=&
    \sum_{|I|+2|J|=y} b^{t}_{IJ}(g_n(\hat{z}_n)+R_1) (\mathbf{a}'(z_n)\hat{\bz}'+R_2)^I (\mathbf{a}''(z_n) \hat{\bz}''+R_2)^J\\
    &=&\sum_{|I|+2|J|=y} b^t_{IJ}(g_n(\hat{z}_n))(\mathbf{a}'(z_n)\hat{\bz}')^I (\mathbf{a}''(z_n)\hat{\bz}'')^J+\sum_{|I|+2|J|=y}b^t_{IJ}(g_n(\hat{z}_n))(\mathbf{a}'(z_n)\hat{\bz})'^I  R'^J_2+R^w_{y+1}\\
    &=:& \frac{s_{t'}}{\sigma_{t'}}\frac{\sigma_{t'}}{\sigma_t}+\tilde{R}^w_{y+1}
\end{eqnarray*}
where $R'_2$ is quadratic in $\hat{z}_1,\dots, \hat{z}_{d-1}$, $R^w_{y+1}$ consists of terms of weighted degree $2|I|+J\ge y+1$ and $\tilde{R}^w_{y+1}$ consists of terms with $2|I|+J\ge y$ and $|I|+|J|>y-2\lfloor y/2\rfloor$.  
As a consequence, we get:
   \begin{eqnarray*}
    \gamma_{t'}([s])
    &=&\sigma_t\cdot \sum_{\substack{|I|=y-2\lfloor y/2\rfloor\\ |J|=\lfloor y/2\rfloor}}b^t_{IJ}(g_n(\hat{z}_n))(a'(z_n)\dif\hat{\bz}')^{\otimes I} (a''(z_n)\hat{\bz}'')^{\otimes J}
\end{eqnarray*}
which coincides with $\gamma_t([s])$ on $B_t\cap B_{t'}$. 

So we get a linear map of vector spaces:
\begin{equation*}
    \gamma: H^0(X, L^k\otimes \CI_y/\CI_{y+1})\rightarrow H^0(\IP^1, f^*L^k\otimes\sym^{y-2 \lfloor y/2 \rfloor}(\CO(-1)^{\oplus (d-2)})\otimes \sym^{\lfloor y/2\rfloor}(\CO^{\oplus (n-d+1)})),
\end{equation*}
whose kernel consists of elements $[s]\in H^0(X, L^k\otimes \CI_y/\CI_{y+1})$ that has the initial term representation given by ($m\ge 1$ now):
\begin{equation*}
    \frac{s_t}{\sigma_t}=\sum_{m=1}^{\lfloor y/2\rfloor}\quad  \sum_{\substack{|I|+|J|=y-\lfloor y/2 \rfloor+m \\ |I|=y-2 \lfloor y/2\rfloor+2 m\\ |J|=\lfloor y/2\rfloor-m}} b_{IJ}(z_n) \bz'^I \bz''^J,
\end{equation*}
which by the same argument as above induces a section in $H^0(\IP^1, f^*L^k\otimes \sym^{y-2\lfloor y/2\rfloor +2}(\CO(-1)^{\oplus(d-2)})\otimes \sym^{\lfloor y/2 \rfloor-1}(\CO^{\oplus (n-d+1)}))$. 
So we inductively get the estimate:
\begin{eqnarray}
    && h^0(X, L^k\otimes \CI_y/\CI_{y+1})\\
    &\le& 
    \sum_{m=0}^{\lfloor y/2\rfloor }h^0(\IP^1, \CO(kd)\otimes \sym^{y-2 \lfloor y/2\rfloor+2m}(\CO(-1)^{\oplus (d-2)})\otimes \sym^{\lfloor y/2\rfloor -m}(\CO^{\oplus (n-d+1)}))\nonumber \\
    &\le & \sum_{m=0}^{\lfloor y/2\rfloor}h^0(\IP^1, \CO(kd)\otimes \sym^{2 m}(\CO(-1)^{\oplus (d-2)})\otimes \sym^{\lfloor y/2\rfloor-m}(\CO^{\oplus (n-d+1)})). \label{eq-west}
\end{eqnarray}
For the last inequality, we used the facts that $y-2 \lfloor y/2\rfloor\ge 0$ and $\CO(-1)$ is negative. 
\begin{rmk}\label{rmk-Zhuang}
    In the first version of this preprint, we defined valuations $v_\ell$ for any $\ell\ge 1 \in \IN$ and claimed that the above estimate is true for any $\ell\ge 1\in \IN$. Ziquan Zhuang pointed out that for $\ell \ge 3$, this is not true. The reason is that while $v_1$ and $v_2$ are well-defined globally, the valuation $v_\ell$ for $\ell \ge 3$ depends on a collection of coordinate charts that only cover a dense open subset of $f(\IP^1)$ (the complement of finite many points), which causes the failure of the above estimate. For $v_1$ and $v_2$ the coordinates charts we constructed cover the whole $f(\IP^1)$ and there is no such an issue. By a similar argument in Subsection \ref{sub-coord}, a sufficient condition for $v_\ell$ to be defined globally is that the transition functions between coordinate charts covering the whole $f(\IP^1)$ are of the form:
    \begin{equation*}
        z_k=a_k(\hat{z}_n)\hat{z}_k+R_{\ell}, 1\le k\le n-1; z_n=g_n(\hat{z}_n)+R_{\ell-1} 
    \end{equation*}
where $R_{\ell}$ (resp. $R_{\ell-1}$) consists of terms of degree at least $\ell$ (resp. at least $\ell-1$) in $\hat{z}_1,\dots, \hat{z}_{n-1}$. 
    When the minimal rational curve is embedded, this condition is satisfied exactly when the rational curve is $(\ell-2)$-comfortably embedded in the sense of Abate-Bracci-Tovena (see \cite[Remark 3.3]{ABT09}). The latter condition is a refinement and a slight weakening of Grauert's $(\ell-1)$-linearizability condition. From this point of view, the valuation $v_1$, $v_2$ are always globally defined since any minimal rational curve is always $1$-linearizable, and in particular (trivially) $0$-comfortably embedded.

\end{rmk}
 By the similar (and simpler) argument as above, we get the similar estimate for the valuation $v_1$ (i.e $v_\ell$ from section \ref{sub-coord} with $\ell=1$):
    \begin{eqnarray*}
h^0(X, L^k\otimes \CI_y(v_1)/\CI_{y+1}(v_1))
    &\le & \sum_{m=0}^{y} h^0(\IP^1, \CO(kd)\otimes \sym^{m}(\CO(-1)^{\oplus (d-2)})\otimes \sym^{y-m}(\CO^{\oplus (n-d+1)})).
    \end{eqnarray*}
By \eqref{eq-tels2} and \eqref{eq-west}, for each $v_\ell$ with $\ell\in \{1,2\}$, we get that $h^0(X, L^k\otimes \CO_X/\CI_{xk}(v_\ell))$ is bounded from above by: \footnote{Our main interest will be the case $\ell=2$. We include the parameter $\ell=1$ for comparison later. }
\begin{eqnarray*}
I_k(x):=\sum_{y=0}^{xk-1}\sum_{m=0}^{\lfloor y/\ell\rfloor}a_{y,m}h^0(\IP^1, \CO(kd-\ell m)),
\end{eqnarray*}
where the coefficients 
\begin{eqnarray*}
    a_{y,m}=\binom{d-3+\ell m}{d-3}\binom{n-d+\lfloor y/\ell\rfloor-m}{n-d}.
\end{eqnarray*}
We will calculate the leading coefficient $b_1=\lim_{k\rightarrow +\infty} (I_k(x)\cdot n!)/k^n$. 
First, we assume that $x\le d$. Then we can expand:
\begin{eqnarray*}
I_k(x)&=&\sum_{y=0}^{xk-1}\sum_{m=0}^{\lfloor y/\ell\rfloor}\frac{(\ell m)^{d-3}+O((\ell m)^{d-4})}{(d-3)!}\frac{(\lfloor y/\ell \rfloor -m)^{n-d}+O((\lfloor y/\ell\rfloor -m)^{n-d-1})}{(n-d)!}(kd-\ell m+1)\\
&=&\left[\frac{k^n}{(d-3)!(n-d)!}\sum_{y=0}^{xk-1}\frac{1}{k}\sum_{m=0}^{
\lfloor y/\ell\rfloor}\frac{1}{k} (\ell \frac{m}{k})^{d-3}(\ell^{-1}\frac{y}{k}-\frac{m}{k})^{n-d}(d-\ell \frac{m}{k})\right]+O(k^{n-1}).
\end{eqnarray*}
As $k\rightarrow+\infty$, we see that $I_k(x)=b_1 k^n/n!+O(k^{n-1})$ with $b_1$ given by the integral:
\begin{eqnarray}\label{eq-phiint}
    &&\frac{n!}{(d-3)!(n-d)!}\int_0^x \dif t \int_0^{t/\ell} (\ell s)^{d-3}(\ell^{-1}t-s)^{n-d}(d-\ell s) \dif s\nonumber \\
    &=&\frac{n!}{(d-3)!(n-d)!}\ell^{-(n-d+1)}\int^{x}_0\dif t \int_0^{t}  z^{d-3}(d-z)(t-z)^{n-d} \,\dif z\nonumber \\
    &=&\ell^{-(n-d+1)} \frac{n!}{(d-3)!(n-d+1)!}\int_0^x z^{d-3} (d-z)(x-z)^{n-d+1}\, \dif z\nonumber \\
    &=& \ell^{-(n-d+1)} x^{n-1}\left(dn-(d-2)x\right)\eqqcolon\phi(x).
\end{eqnarray}
It is easy to see that $\phi(x)$ is an increasing function when $x\in [0, \frac{(n-1)d}{(d-2)}]$ with $\phi(d)=\ell^{-(n-d+1)}d^{n}(n-d+2)$. In particular, $\phi(x)$ is strictly increasing when $x \in [0,d]$ since $d\le n+1$.

Next, consider the case when $x\gee d$. Then we need to split the sum in \eqref{eq-tels2} into two parts:
\begin{eqnarray*}
    &&\sum_{y=0}^{dk}\sum_{i=0}^{\lfloor y/\ell\rfloor} a_{y,i}h^0(\IP^1, \CO(kd-\ell i))+\sum_{y=dk+1}^{xk-1}\sum_{i=0}^{\lfloor dk/\ell \rfloor}a_{y,i}h^0(\IP^1, \CO(kd-i\ell)).
\end{eqnarray*}
Similar calculation as above shows that this sum equals $b'_1 k^n/n!+O(k^{n-1})$ with $b'_1$ equal to $C\cdot \mathbf{I}$ where $C= \frac{n!}{(d-3)!(n-d)!}\ell^{-(n-d+1)}$ and $\mathbf{I}$ is equal to:
\begin{eqnarray*}
    &&\int_0^d \dif t\int_0^t z^{d-3}(t-z)^{n-d}(d-z)\dif z+\int_d^x \dif t\int_0^d z^{d-3}(t-z)^{n-d}(d-z)\dif z\\
    &=& C^{-1}\phi(d)+\int_0^d z^{d-3} \frac{1}{n-d+1}\left((x-z)^{n-d+1}-(d-z)^{n-d+1}\right)(d-z)\dif z\\
    &=&\frac{1}{n-d+1}\int_0^d z^{d-3}(d-z)(x-z)^{n-d+1}\dif z.
\end{eqnarray*}
We denote $V=L^n=(-K_X)^n$. So we get the estimate $\vol(L-xE)\ge V-\phi(x)$ for any $x\ge 0$ where
\begin{equation}\label{eq-phix}
   \phi(x)=
   \left\{\begin{array}{ll}
     \ell^{-(n-d+1)} x^{n-1} (dn-(d-2)x) &  \text{if }\; 0\le x\le d\\
    \frac{\ell^{-(n-d+1)} n!}{(d-3)!(n-d+1)!}\int_0^d z^{d-3}(d-z)(x-z)^{n-d+1}\, \dif z & \text{if }\; x\ge d.
\end{array}
\right.
\end{equation}

Note that when $2\le d\le n$, the function $\phi=\phi(x)$ belongs to the class $\mathscr{F}$ considered in Lemma \ref{lem-cal} (see Example \ref{ex-dn1} for the $d=n+1$ case).  In other words, it is a continuous, piecewise smooth, strictly increasing function of $x\in [0, \infty)$ satisfying $\lim_{x\rightarrow 0}\phi(x)=0$ and $\lim_{x\rightarrow+\infty}\phi(x)=+\infty$. Set
\begin{eqnarray}\label{eq-Phix}
\Phi(x)=\int_0^x \phi(t)dt=
\left\{
\begin{array}{ll}
    \frac{\ell^{-(n-d+1)}x^n}{n+1}(d(n+1)-(d-2)x) & 0\le x\le d \\
    \frac{\ell^{-(n-d+1)}n!}{(d-3)!(n-d+2)!}\int_0^\dif z^{d-3}(d-z)(x-z)^{n-d+2}\dif z & x\ge d. 
\end{array}
\right.
\end{eqnarray}

By using the binomial expansion:
\begin{equation*}
    (x-z)^{n-d+1}=(x-d+d-z)^{n-d+1}=\sum_{j=0}^{n-d+1}\binom{n-d+1}{j}(d-z)^j(x-d)^{n-d+1-j}. 
\end{equation*}
it is straightforward to get the following combinatorial expressions when $x\ge d$:
    \begin{eqnarray}
    \phi(x)
    &=&\ell^{-(n-d+1)}\sum_{j=0}^{n-d+1}\binom{n}{j}\cdot (n-d+2-j) d^{n-j}(x-d)^j,\label{eq-phibi}\\
\Phi(x)
&=& \ell^{-(n-d+1)}\sum_{j=0}^{n-d+2} \binom{n+1}{j}\frac{n-d+3-j}{n+1}d^{n+1-j}(x-d)^j\label{eq-Phibi}
\end{eqnarray}
    Set $T=\sup\{x: V-\phi(x)=0\}$. If $T\le d$, then we get $V=\phi(T)\le \phi(d)=\ell^{-(n-d+1)}d^{n}(n-d+2)\le d^n (n-d+2)$. Note that the number $d^n(n-d+2)$ is nothing but the volume of the Fano hypersurface in $\IP^{n+1}$ of degree $n-d+2$ which is always less than $2n^n$ if $2<d<n$. 

\begin{rmk}\label{rmk-2nn}
    Setting $d=2$ in the formula \eqref{eq-phibi}-\eqref{eq-Phibi}, we also recover functions from Example \ref{ex-2nn} (the $\ell^{1-n}$ factor comes the fact that here we are using the $\ell^{\oplus (n-1)}$ blowup): 
    \begin{equation*}
        \phi(x)=2n\cdot \ell^{1-n}x^{n-1}, \quad \Phi(x)= 2 \ell^{1-n}x^n.
    \end{equation*}
\end{rmk}

So we can assume $T\ge d$.
Now we set $\ell=2$ and $A=A_X(v_2)=(d-2)+2(n-d+1)=2n-d$. The equation $\Phi(T)=(T-A)\phi(T)$ from \eqref{eq-pseudoT} becomes:
\begin{equation}\label{eq-key}
    \frac{1}{n-d+2}\int_0^d z^{d-3}(d-z)(T-z)^{n-d+2}\, \dif z=
    (T-(2n-d))\int_0^d z^{d-3}(d-z)(T-z)^{n-d+1}\, \dif z.
\end{equation}
When $2\le d\le n$, by Corollary \ref{cor-calbd}, there exists a unique solution $T=T(d, n)$ to the above equation.

\begin{ex}\label{ex-dn1}
When $d=n+1$, the expression \eqref{eq-phix} becomes:
$\phi(x)=(n+1)n x^{n-1}-(n-1)x^n$ if $x\le n+1$ and $\phi(x)=(n+1)^n$ if $x\ge n+1$. 
We then get the estimate: $\vol(-K_X-xE)\ge \vol(X)-(n+1)^n$ for any $x\ge n+1$, which implies the (un-interesting) volume estimate $\vol(X)\le (n+1)^n=\vol(\IP^n)$.
\end{ex}
\begin{ex}\label{ex-d=n}
When $d=n$, the expressions \eqref{eq-phibi}-\eqref{eq-Phibi} become: 
\begin{eqnarray}
    \phi(x)&=&\ell^{-1}\left(n \cdot n^{n-1}\cdot (x-n)+2 n^n\right)
    =\ell^{-1}n^n(x+2-n),\label{eq-phidn} \\
    \Phi(x)
    &=&\frac{\ell^{-1}n^n}{2}\left(x^2-2(n-2)x+\frac{n}{(n+1)}(n-1)(n-2)\right).\nonumber 
\end{eqnarray}
 When $d=n$ and for the case of $(1^{\oplus(d-2)},\ell^{\oplus(n-d+1)})$-weighted blowup, equation $(T-A(v_\ell))\phi(T)=\Phi(T)$ is equivalent to $T^2-2(n-2+\ell) T+\left(\frac{n(n-2)(n+3)}{2(n+1)}+(2\ell-4)\right)=0$. One can solve \begin{equation*}
     T=\left\{
     \begin{array}{ll}
     (n-1)+\sqrt{\frac{3(n-1)}{n+1}} & \text{for } \ell=1; \\
     n+ \sqrt{\frac{6n}{n+1}}& \text{for } \ell=2.
     \end{array}
     \right.
 \end{equation*}
 So we get the volume upper bound $\vol(X)\le \phi_\ell(T)$ which is given by:
 \begin{equation*}
 \phi_\ell(T)=
 \left\{
     \begin{array}{ll}
     n^n (T-n+2)=n^n (1+\sqrt{\frac{3(n-1)}{n+1}}) & \text{for } \ell=1\\
     \frac{1}{2}n^n(T-n+2)=n^n\left(1+\sqrt{\frac{3n}{2(n+1)}}\right) & \text{for } \ell=2.
     \end{array}
     \right.
 \end{equation*}
 Note that when $n=2$, $\phi_1(T)=\phi_2(T)=\vol(Q)=8$. When $n\ge 3$, $\phi_1(T)>\phi_2(T)>2n^n=\vol(Q)$. To get the sharp upper bound of $\vol(X)=(-K_X)^n$ for all $d=n\ge 3$, we are going to use the classification results stated in Theorem \ref{thm-lX}. 

\end{ex}

By Corollary \ref{cor-calbd},  Proposition \ref{prop-3ton} is equivalent to the following proposition. 
\begin{prop}\label{prop-3&n-1}
    Assume $n=\dim X\ge 4$. 
    Let $3\le d\le n-1$ and $T$ be the solution to equation \eqref{eq-key}. We have $\phi(T)<2n^n$ where $\phi$ was defined in \eqref{eq-phix}. As a consequence, we know that if $X$ is a K-semistable Fano manifold $3\le l_X=d\le n-1$, then $\vol(X)<2n^n$. 
\end{prop}

\begin{rmk}
Example \ref{ex-d=n} shows that for $d=n$ the estimate $\phi(T)<2n^n$ is not true. Surprisingly, Proposition \ref{prop-3&n-1} implies that the $(1^{\oplus (d-2)}, 2^{\oplus (n-d+1)})$ weighted blow-up is sufficient to establish the sharp estimate for all $3\le d\le n-1$.
Even though the statement appears elementary, due to the complicated expansion from \eqref{eq-phibi}-\eqref{eq-Phibi} it seems not so easy to find a simple proof for all pairs of positive integers $(d,n)$ satisfying $3\le d\le n-1$. Motivated by some numerical calculations, we managed to get the desired estimate using some tools from calculus and probability theory. See Figure \ref{fig-n7} for the numerical illustration for $n=7$ and $2\le d\le 7$. 
\end{rmk}

The rest of this subsection is devoted to explain the proof of Proposition \ref{prop-3&n-1}. We first consider the case $d=n-1$. Note that equation $(T-A_X(E))\phi(T)=\Phi(T)$ appeared in Corollary \ref{cor-calbd} is a degree $n-d+2$ polynomial equation. When $d=n-1$, the corresponding equation is a cubic equation in $T$ for which we can explicitly compute the root $T$ using Cardano's formula.
\begin{prop}\label{prop-n-1}
Proposition \ref{prop-3&n-1} holds when $d=n-1\ge 3$.
\end{prop}
\begin{proof}
        When $d=n-1$ and $\ell=2$, the expressions from \eqref{eq-phibi}-\eqref{eq-Phibi} become: 
    \begin{eqnarray*}
        \phi(x) &=& \frac{1}{2^2}\left(\frac{n(n-1)^{n-1}}{2}(x-n+1)^2+2n(n-1)^{n-1}(x-n+1)+3(n-1)^n\right),\\
        \Phi(x) &=& \frac{1}{2^2}(\frac{n(n-1)^{n-1}}{6}(x-n+1)^3+n(n-1)^{n-1}(x-n+1)^2\\
        &&\, +3(n-1)^n(x-n+1)+\frac{4}{n+1}(n-1)^{n+1}).
    \end{eqnarray*}
    The equation $(T-A(E))\phi(T)=\Phi(T)$ is equivalent to $\tau^3-12\tau-\frac{6(n-1)(5n+1)}{n(n+1)}=0$ where $\tau=T-n+1>2n-d-n+1=2$.  By Cardano's formula,
    \begin{eqnarray*}
        \tau = \sqrt[3]{-\frac{q}{2}+\sqrt{\frac{q^2}{4}-64}}+\sqrt[3]{-\frac{q}{2}-\sqrt{\frac{q^2}{4}-64}},
    \end{eqnarray*}
    where $q=-\frac{6(n-1)(5n+1)}{n(n+1)}$. Hence, $\vol(X)\leq\phi(T)=\frac{n(n-1)^{n-1}}{8}((\tau+2)^2+2-\frac{6}{n})\eqqcolon F(n)$. We claim that for $n\geq 4$, we have $F(n)< 2n^n=\vol(Q)$.  
    
    The root $\tau$ satisfies $\tau^3-12\tau=C(n)$, where $C(n)=\frac{6(n-1)(5n+1)}{n(n+1)}$. Note that $f(\tau)=\tau^3-12\tau$ is monotone increasing when $\tau \geq 2$.  And $C(n)$ is monotone increasing with respect to $n$. So $\tau = f^{-1}(C(n))$ is monotone increasing. And $\lim_{n\rightarrow +\infty}\tau(n) = \sqrt[3]{15+\sqrt{161}}+\sqrt[3]{15-\sqrt{161}}\eqqcolon\tau_{\infty}$. So, in particular, we have $\tau \leq \tau_{\infty}$. We further consider \begin{eqnarray*}
        \frac{F(n)}{2n^n}&\leq& \frac{n(n-1)^{n-1}}{8\cdot 2n^n}\left((\tau_{\infty}+2)^2+2-\frac{6}{n}\right)\\
        &\leq&\frac{1}{16\left(1+\frac{1}{n-1}\right)^{n-1}}\left(\left(\sqrt[3]{15+\sqrt{161}}+\sqrt[3]{15-\sqrt{161}}+2\right)^2+2\right)\eqqcolon\gamma(n).
    \end{eqnarray*}
    Note that $\gamma(n)$ is monotone decreasing with respect to $n$ and $\lim_{n\rightarrow\infty}\gamma(n)=\frac{1}{16e}(\sqrt[3]{15+\sqrt{161}}+\sqrt[3]{15-\sqrt{161}}+2)^2<1$. So $\gamma(n)<1$ when $n$ is sufficiently large. By numerical method, we can show that $\gamma(n)<1$ when $n\geq 19$. By direct computation, one can check that $F(n)<2n^n$ when $4\le n\le 18$ (one could also compute that when $n=3$ and $d=2$ so that we are in the case of trivial normal bundle, $F(3)=2\cdot 3^3=54$). 
\end{proof}
\begin{rmk}\label{rmk-12}
     If we use the valuation $v_1$ in the case of $d=n-1$, we can do a similar computation as in the proof of Proposition \ref{prop-n-1}, then
     $$
     \tau=T-n+1=-1+\sqrt[3]{-1+\frac{q}{2}+\sqrt{\frac{q^2}{4}-q}}+\sqrt[3]{-1+\frac{q}{2}-\sqrt{\frac{q^2}{4}-q}}
     $$
     where $q=\frac{12(n-1)^2}{n(n+1)}$. In this case, $\phi(T)=\frac{n(n-1)^{n-1}}{2}\tau^2+2n(n-1)^{n-1}\tau+3(n-1)^n>2n^n=\vol(Q)$ when $n\geq 4$. This shows that the use of $(1^{\oplus(d-2)},2^{\oplus(n-d+1)})$-weighted blowup is crucial for the proof of Proposition \ref{prop-3ton} and hence for Theorem \ref{thm-main}. 
\end{rmk}

    \begin{proof}[Completion of proof of Proposition \ref{prop-3&n-1}]
Because of Proposition \ref{prop-n-1}, we just need to prove Proposition \ref{prop-3&n-1} when $3\le d\le n-2$. For simplicity, set
\begin{eqnarray}\label{eq-Psi}
    \Psi(x)&=&(x-(2n-d))\phi(x)-\Phi(x). 
\end{eqnarray}
Note that $\Psi'(x)=(x-(2n-d))\phi'(x)$ since $\phi(x)=\Phi'(x)$. Since $\phi'(x)>0$ for $x>0$, we have the two useful facts:
\begin{itemize}
     \item $\Psi(x)$ is negative for $x\in (0, T)$ and positive for $x\in (T, +\infty)$, where $T$ is the unique solution to $\Psi(T)=0$ (see Corollary \ref{cor-calbd}).  
    \item $x>2n-d+1$, $\Psi'(x)>\phi'(x)$.
\end{itemize} 
Now we claim that $x_1=2n-d+2$ satisfies the following two inequalities
   \begin{equation}\label{eq-goal0} 
   \Psi(x_1)<0 \quad \text{ and } \quad 
   \phi(x_1)-2n^n-\Psi(x_1)<0.
   \end{equation} 
Assuming this, we can complete the proof of the proposition. To see this, we note that by the first fact above, $\Psi(x_1)<0$ implies that $x_1<T$. Then we can estimate:
   \begin{eqnarray*}
    \phi(T)&=&\phi(x_1)+\int_{x_1}^T \phi'(x)\dif x<\phi(x_1)+\int_{x_1}^T\Psi'(x)\dif x\\
      &=&\phi(x_1)+\Psi(T)-\Psi(x_1)<2n^n. 
   \end{eqnarray*}
Note that we used the 2nd fact pointed out above since $x_1>2n-d+1$, and the vanishing $\Psi(T)=0$. 
Finally we need to verify the above claim that the two estimates in \eqref{eq-goal0} are true. 
In other words, we need to show that for any $3\le d\le n-2$, the following two inequalities are true:
\begin{equation}\label{eq-x1T}
    \Psi(2n-d+2)=2\phi(2n-d+2)-\Phi(2n-d+2)<0, 
\end{equation}
\begin{equation}\label{eq-goal}
   \phi(2n-d+2)-\Psi(2n-d+2)=\Phi(2n-d+2)-\phi(2n-d+2)<2n^n. 
\end{equation}
The inequality \eqref{eq-x1T} can be quickly verified below in Lemma \ref{lem-x1T}. Regarding the inequality \eqref{eq-goal}, for concrete $n$ and $d$, it can be verified by straight forward calculation. For arbitrary $n$ and $d$, the proof of \eqref{eq-goal} is complicated and is contained in the \textbf{Appendix }\ref{proof-goal}. 
Roughly speaking, we will establish some effective estimates to reduce the proof of \eqref{eq-goal} to finitely many cases of $(n,d)$ for which we can verify numerically.
\end{proof}
\begin{lem}\label{lem-x1T}
For $3\le d\le n-2$, 
    $\Phi(2n-d+2)>2\phi(2n-d+2)$. 
\end{lem}
\begin{proof}
    Set $x_1=2n-d+2$ which is bigger than $d$ and
    $C=\frac{(d-3)!}{2^{n-d-1}(n-d+2)!}>0$.
We will use the integral formula in \eqref{eq-phix}-\eqref{eq-Phix} to calculate: 
\begin{eqnarray*}
  &&\Phi(x_1)-2\phi(x_1)=C\int_0^d z^{d-3}(d-z)(x_1-z)^{n-d+1}\left[x_1-z-2(n-d+2)\right]\dif z\\
  &=&C\int_0^d z^{d-3}(d-z)\left[2(n-d+1)+d-z\right]^{n-d+1}(d-z-2)\dif z\\
  &=&C \int_0^d z^{d-3}(d-z)\sum_{j=0}^{n-d+1}\binom{n-d+1}{j}2^j(n-d+1)^j(d-z)^{n-d+1-j}((d-z)-2)dz\\
  &=&C\sum_{j=0}^{n-d+1}\binom{n-d+1}{j}d^{n-j}\cdot 2^j(n-d+1)^j\\
  &&\quad\cdot\left(d\cdot B(d-2, n-d-j+4)-2 B(d-2, n-d+3-j)\right)\\
  &=&\tilde{C}\sum_{j=0}^{n-d+1}2^j(n-d+1)^j d^{n-j}\frac{n-d+2-j}{j!(n-j)!} \cdot \left(\frac{d(n-d+3-j)}{n+1-j}-2\right)>0,
\end{eqnarray*}
where $\tilde{C}=(n-d+1)!\cdot C$,  $\frac{d(n-d+3-j)}{n+1-j}-2=\frac{(d-2)(n-d+1-j)}{n+1-j}\geq 0$ with equality only if $j=n-d+1$ and $B(\alpha,\beta)=\int_0^1 x^{\alpha-1}(1-x)^{\beta-1}\dif x$ denotes the Beta function for $\alpha,\beta \in \mathbb{R}_{>0}$.
\end{proof}

\subsection{Completion of the proof of Theorem \ref{thm-main}}
\begin{proof}[Proof of Theorem \ref{thm-main}]
We use the notation $l_X$ from \eqref{eq-lX1}-\eqref{eq-lX}.
If $l_X=n+1$, then by Theorem \ref{thm-lX}, $X$ must be $\IP^n$, and if $l_X=n$ then $X$ is either $Q^n$ or the blowup of $\IP^n$ along a smooth codimension two subvariety $Y$ of degree $d_Y\in \{1, \dots, n\}$ in a hyperplane. When $d_Y\geq 2$, by Lemma \ref{lem:CDvolume} below we know $(-K_{\bl_Y \IP^n})^n<2n^n$. When $d_Y=1$, by Lemma \ref{lem-CDunstable} below we know that $\bl_{\IP^{n-2}}\IP^n$ is not a K-semistable Fano manifold, so it is excluded. Thus, we verified the Theorem \ref{thm-main} in the case $l_X\ge n$. When $l_X<n$, there are minimal rational curves of degree $d\in \{2,\dots, n-1\}$. For $3\le d\le n-1$, 
    by Proposition \ref{prop-3&n-1}, $\vol(X)<2n^n$. When $d=2$, the minimal rational curve has a trivial normal bundle. If its image has singular points, then by Proposition \ref{prop-deg2} $\vol(X)<2n^n$ when $n\ge 3$. For $n=2$, the sharp volume bound of course follows from the classification of del Pezzo surfaces. Note that in this case, the quadric surface coincides with $\mathbb{P}^1\times \mathbb{P}^1$. 
    Finally, if the minimal rational curve with trivial normal bundle is an embedding, then by Theorem \ref{thm-ntrivial} we get $\vol(X)\le 2n^n$ with equality if and only if $X\cong \IP^1\times \IP^{n-1}$. 
\end{proof}
\begin{lem}\label{lem:CDvolume}
    Suppose $X$ is the blowup of $\IP^n$ along a subvariety $Y$ of degree $d_Y\in \{1,\cdots,n\}$ that is contained in a hyperplane (See Theorem \ref{thm-lX}). Then
    \begin{equation*}
        (-K_{\bl_Y \IP^n})^n=\left\{\begin{array}{ll}
        2n^n & \text{if } d_Y=1\\
        \frac{d_Y n^n-(n+1-d_Y)^n}{d_Y-1}=:V_{d_Y} & \text{if } 2\le d_Y\le n.
        \end{array}
        \right.
    \end{equation*}
\end{lem}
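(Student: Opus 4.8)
The plan is to compute $(-K_{\bl_Y\IP^n})^n$ by a direct intersection calculation on the blow-up, exploiting the $\IP^1$-bundle structure of the exceptional divisor together with the normal bundle of $Y$. First, from the classification recalled in Example~\ref{ex-dX}, when $X=\bl_Y\IP^n$ has $l_X=n$ the centre $Y$ is a smooth hypersurface of degree $d_Y$ inside a hyperplane $\Pi\cong\IP^{n-1}\subset\IP^n$ — a linear $\IP^{n-2}$ when $d_Y=1$ — and in particular $Y$ has codimension $2$ in $\IP^n$. Write $\pi\colon \tX:=\bl_Y\IP^n\to\IP^n$ for the blow-down, $E$ for the exceptional divisor, and $H=\pi^*\CO_{\IP^n}(1)$. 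Since $Y$ has codimension $2$, the discrepancy formula gives $K_{\tX}=\pi^*K_{\IP^n}+E$, hence $-K_{\tX}=(n+1)H-E$, and therefore
\[
(-K_{\tX})^n=\sum_{k=0}^{n}\binom{n}{k}(n+1)^{n-k}(-1)^{k}\,\bigl(H^{n-k}\cdot E^{k}\bigr),
\]
so the task reduces to evaluating the numbers $H^{n-k}\cdot E^{k}$.

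For these I would use push--pull along $p:=\pi|_E\colon E=\IP\bigl(N^\vee_{Y/\IP^n}\bigr)\to Y$, writing $\xi:=c_1(\CO_E(1))$ so that $E|_E=-\xi$ and $H|_E=p^{*}h$ with $h=c_1(\CO_Y(1))$. The exact sequence $0\to N_{Y/\Pi}\to N_{Y/\IP^n}\to N_{\Pi/\IP^n}|_Y\to 0$, together with $N_{Y/\Pi}=\CO_Y(d_Y)$ and $N_{\Pi/\IP^n}|_Y=\CO_Y(1)$, gives $c(N_{Y/\IP^n})=(1+h)(1+d_Yh)$, i.e.\ Chern roots $h$ and $d_Yh$, and $\int_Y h^{\,n-2}=d_Y$. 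By the projective bundle formula $p_{*}\xi^{m}$ is the complete homogeneous symmetric function of degree $m-1$ in the Chern roots $-h,-d_Yh$ of $N^\vee_{Y/\IP^n}$, that is $p_{*}\xi^{m}=(-h)^{m-1}\bigl(1+d_Y+\cdots+d_Y^{\,m-1}\bigr)$ for $m\ge 1$ and $p_{*}1=0$. The projection formula then gives $H^{n-k}\cdot E^{k}=(-1)^{k-1}\int_Y h^{\,n-k}\,p_{*}\xi^{k-1}$, which equals $1$ for $k=0$, vanishes for $k=1$, and equals $-\,d_Y\bigl(d_Y^{\,k-1}-1\bigr)/(d_Y-1)$ for $2\le k\le n$.

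Finally I would feed these values into the binomial sum and isolate the geometric series using the identities $\sum_{k=0}^{n}(-1)^{k}\binom{n}{k}(n+1)^{n-k}d_Y^{\,k}=(n+1-d_Y)^{n}$ and $\sum_{k=0}^{n}(-1)^{k}\binom{n}{k}(n+1)^{n-k}=n^{n}$; the $(n+1)^{n}$ contributions cancel and one is left with $(-K_{\tX})^{n}=\bigl(d_Yn^{n}-(n+1-d_Y)^{n}\bigr)/(d_Y-1)=V_{d_Y}$ for $2\le d_Y\le n$. For $d_Y=1$ the same computation with $1+d_Y+\cdots+d_Y^{\,m-1}$ replaced by its value $m$ (equivalently, passing to the limit $d_Y\to 1$ in $V_{d_Y}$, or using the elementary identity $\sum_{k=0}^{n}(-1)^{k}(k-1)\binom{n}{k}(n+1)^{n-k}=-2n^{n}$) yields $2n^{n}$.

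I do not anticipate a genuine obstacle. The two points needing care are: extracting correctly from the classification that $Y$ is a smooth codimension-$2$ hypersurface in a hyperplane, which simultaneously fixes the formula $-K_{\tX}=(n+1)H-E$ and the normal bundle; and keeping the blow-up sign conventions consistent ($E|_E=-\xi$, $p_{*}1=0$, $p_{*}\xi=1$). Once these are set, the remaining binomial identities are routine. (If one prefers to avoid Chern roots, the identical computation can be run through the Segre classes of $N_{Y/\IP^n}$, which are explicit because $c(N_{Y/\IP^n})$ factors as $(1+h)(1+d_Yh)$.)
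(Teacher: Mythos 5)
Your proof is correct and takes essentially the same route as the paper: expand $((n+1)H-E)^n$ binomially, evaluate $H^{n-k}\cdot E^k$ via the projective-bundle structure of $E\to Y$ and the Chern/Segre data of $N_{Y/\IP^n}$ (whose total Chern class is $(1+h)(1+d_Yh)$), and simplify the resulting geometric series. The paper phrases the projective-bundle computation in terms of Segre classes $s_{k-2}(N_{Y/\IP^n})$ rather than Chern roots, which is the same computation in slightly different notation.
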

\begin{proof}
    Denote $d=d_Y$. The normal bundle of $Y$ is given by $N_{Y/\IP^n}=\CO_Y(1)\oplus \CO_Y(d)$. Let $H=\pi^*\CO_{\IP^n}(1)$ be the pullback of a hyperplane section of $\IP^n$. Then,
\begin{eqnarray*}
    (-K_{\bl_Y\IP^n})^n &=& ((n+1)H-E)^n = (n+1)^n+\sum_{k=2}^n \binom{n}{k}(n+1)^{n-k}\cdot(-1)^k(H^{n-k}\cdot E^k)\\
    &=&  (n+1)^n-\sum_{k=2}^n \binom{n}{k}(n+1)^{n-k}\cdot(s_{k-2}(N_{Y/\IP^n})\cdot H^{n-k}|_A)\\
    &=& (n+1)^n-\sum_{k=2}^n \binom{n}{k}(n+1)^{n-k}\cdot (-1)^{k-2}\left(\sum_{i=0}^{k-2} d^i\right)\cdot d,
\end{eqnarray*}
where $s_{k-2}(N_{Y/\IP^n})$ denote the $(k-2)$-th Segre class of normal bundle $N_{Y/\IP^n}$. If $d=1$, then $\sum_{i=0}^{k-2}d^i=k-1$. By the combinatorial identity $\binom{n}{k}(k-1)=n\binom{n-1}{k-1}-\binom{n}{k}$, 
\begin{eqnarray*}
    (-K_{\bl_{\IP^{n-2}}\IP^n})^n
    &=& (n+1)^n-\sum_{k=2}^n (-1)^k\left[n\binom{n-1}{k-1}-\binom{n}{k}\right](n+1)^{n-k} = 2n^n.
\end{eqnarray*}
If $d\gee 2$, then $\sum_{i=0}^{k-2}d^i=(d^{k-1}-1)/(d-1)$. Then,
\begin{eqnarray*}
    (-K_{\bl_{Y}\IP^n})^n
    &=& (n+1)^n-\sum_{k=2}^n (-1)^k\binom{n}{k}\cdot\left(\frac{d^k-d}{d-1}\right)(n+1)^{n-k}\\
    &=& \frac{1}{d-1}\left(dn^n-(n+1-d)^n\right).
\end{eqnarray*}
Note that by L'Hospital's rule $\lim_{d_Y\rightarrow 1}V_{d_Y}=2n^n$ and it is easy to check that $V_{d_Y}<2 n^n$ for $d_Y\ge 2$. 
\end{proof}
\begin{lem}\label{lem-CDunstable}
    Suppose $\pi\colon X\rightarrow \IP^n$ is the blowup of $\IP^n$ along a subvariety $Y\cong \IP^{n-2}$ of degree $d_Y=1$ contained in a hyperplane of $\IP^n$, then $X$ is K-unstable.
\end{lem}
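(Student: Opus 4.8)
The plan is to produce a single prime divisor $F$ over $X:=\bl_Y\IP^n$ (here $Y\cong\IP^{n-2}$ is a linear subspace) with $A_X(F)-S(F)<0$ and then invoke the valuative criterion, Theorem \ref{thm:beta}. The natural candidate is $F:=$ the strict transform of a hyperplane $\Pi\supset Y$. Since $Y$ is a Cartier divisor in $\Pi$, blowing up changes nothing along $\Pi$, so $F\cong\IP^{n-1}$ is a smooth prime divisor, $A_X(F)=1$, and numerically $F\sim\pi^*H-E$, where $\pi\colon X\to\IP^n$ is the blow-down, $H=\pi^*\CO_{\IP^n}(1)$, and $E$ the exceptional divisor. (Equivalently, $F$ is a fibre of the $\IP^{n-1}$-bundle $\varphi\colon X\to\IP^1$ resolving the projection from $Y$.) By Lemma \ref{lem:CDvolume} we already know $(-K_X)^n=2n^n$, so it suffices to prove $S(F)>1$.

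Next I would determine the volume function $t\mapsto\vol(-K_X-tF)$. Write $-K_X-tF=(n+1-t)H-(1-t)E=:aH-bE$. For $0\le b\le a$ the class $aH-bE$ is nef, being the non-negative combination $\tfrac{a-b}{a}\,(aH)+\tfrac{b}{a}\,(a(H-E))$ of the nef classes $aH=(n+1-t+ b)\pi^*\CO(1)$-type things and $H-E=\varphi^*\CO_{\IP^1}(1)$; moreover $H^0(X,\CO_X(aH-bE))\cong H^0(\IP^n,\CO(a)\otimes\CI_Y^{\,b})$ is the space of degree-$a$ forms of multiplicity $\ge b$ along the linear space $Y$, and counting these monomials and passing to the limit over multiples gives
\[
\vol(aH-bE)=a^n-n(n-1)\int_0^b\mu\,(a-\mu)^{n-2}\,\dif\mu\qquad(0\le b\le a),
\]
which handles $t\in[0,1]$. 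For $t\in[1,n+1]$ one has $-K_X-tF=(n+1-t)H+(t-1)E$ with $(t-1)E$ a positive multiple of the $\pi$-exceptional divisor; by the projection formula together with $\pi_*\CO_X(jE)=\CO_{\IP^n}$ (Hartogs, since $\pi(E)=Y$ has codimension $2$) its sections are exactly those of $(n+1-t)H$, so $\vol(-K_X-tF)=(n+1-t)^n$ on $[1,n+1]$, and the volume vanishes for $t>n+1$.

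Finally I would integrate. Because $\int_0^1(n+1-t)^n\,\dif t+\int_1^{n+1}(n+1-t)^n\,\dif t=\int_0^{n+1}u^n\,\dif u=(n+1)^n$, the leading terms telescope and
\[
2n^n\cdot S(F)=(n+1)^n-n(n-1)\,J,\qquad J:=\int_0^1\!\!\int_0^{1-t}\mu\,(n+1-t-\mu)^{n-2}\,\dif\mu\,\dif t .
\]
On the domain of integration $n+1-t-\mu\le n+1$, so $J\le\tfrac16(n+1)^{n-2}$, and $S(F)>1$ will follow from $\tfrac{n(n-1)}{6}(n+1)^{n-2}<(n+1)^n-2n^n$, equivalently (dividing by $(n+1)^{n-2}$) from $\tfrac{n(n-1)}{6}<(n+1)^2\bigl(1-2(n/(n+1))^n\bigr)$. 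Since $(1-\tfrac1{n+1})^n<e^{-1}$ and $1-2e^{-1}>\tfrac14$, the right-hand side exceeds $\tfrac14(n+1)^2$, whereas the left-hand side is smaller than $\tfrac{n^2}{6}<\tfrac14(n+1)^2$; hence the inequality holds for every $n\ge2$. Therefore $A_X(F)-S(F)=1-S(F)<0$, and $X$ is K-unstable by Theorem \ref{thm:beta}.

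The only genuinely delicate point is pinning down the volume function: one must observe that for $t>1$ the class $-K_X-tF$ leaves the nef cone, so its volume is computed from the positive part $(n+1-t)H$ rather than from a naive self-intersection, and one must get the intersection numbers $H^{n-k}\cdot E^k=-(k-1)$ $(2\le k\le n)$ on $\bl_Y\IP^n$ — equivalently the monomial count above — correct; after that it is a routine integral plus an elementary estimate. As an alternative route one could exploit that $X$ is a toric Fano manifold and check directly that the barycenter of its moment polytope is nonzero, quoting \cite{WZ04,BB13}; that computation is essentially the one above applied to the torus-invariant divisor $F$.
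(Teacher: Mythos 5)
Your approach is sound in outline but differs from the paper's: you test K-semistability against $F$, the strict transform of a hyperplane through $Y$, whereas the paper uses the exceptional divisor $E$ of the blow-up itself (and computes $S(-K_X;E)=1+\tfrac{n-1}{2(n+1)}$ exactly, so $A(E)-S(E)=-\tfrac{n-1}{2(n+1)}<0$). Your set-up of the volume function is correct: the monomial count on the nef range $t\in[0,1]$ is right, the observation that for $t\in[1,n+1]$ the positive part is $(n+1-t)H$ because $\pi_*\CO_X(jE)=\CO_{\IP^n}$ is right, and the intersection numbers $H^{n-k}\cdot E^k=-(k-1)$ are right.

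However, the closing estimate is wrong. You assert $(1-\tfrac1{n+1})^n<e^{-1}$, but in fact $(1-\tfrac1{n+1})^n=(1+\tfrac1n)^{-n}$ is \emph{strictly decreasing} to $e^{-1}$, so the inequality goes the other way: $(1-\tfrac1{n+1})^n>e^{-1}$ for every $n$. Consequently the claimed lower bound $1-2(n/(n+1))^n>\tfrac14$ fails; for instance at $n=2$ it is $1-2\cdot\tfrac49=\tfrac19<\tfrac14$, and the chain of inequalities you use to conclude $S(F)>1$ breaks down. The inequality you are trying to prove is in fact true, but your argument does not establish it.

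The gap is easiest to close by simplifying rather than estimating. Using the intersection numbers $H^{n-k}\cdot E^k=-(k-1)$ one gets the closed form $(aH-bE)^n=(a-b)^{n-1}\bigl(a+(n-1)b\bigr)$ for $0\le b\le a$ (this also recovers the paper's formula $(n-t)^{n-1}(2n+(n-1)t)$ for $-K_X-tE$). With $a=n+1-t$, $b=1-t$ this is $n^{n-1}\bigl(n+(n-1)(1-t)\bigr)=n^n(2-t)$, so
\[
\int_0^{\infty}\vol(-K_X-tF)\,\dif t=\int_0^1 n^n(2-t)\,\dif t+\int_1^{n+1}(n+1-t)^n\,\dif t=\tfrac32 n^n+\tfrac{n^{n+1}}{n+1},
\]
giving the exact value $S(F)=\tfrac34+\tfrac{n}{2(n+1)}$, hence $A_X(F)-S(F)=-\tfrac{n-1}{4(n+1)}<0$ for $n\ge 2$. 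So your route works once the flawed asymptotic estimate is replaced by this exact computation; compared with the paper's choice of $E$, your $F$ destabilizes by a margin exactly half as large, and the paper's $E$ is in fact the $\delta$-minimizer, as noted in the remark after the lemma.
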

\begin{proof}
    Let $E = \pi^{-1}(\IP^{n-2})$ be the exceptional divisor. And let $H=\pi^*\CO_{\IP^n}(1)$ be the pullback of a hyperplane section of $\IP^n$. Note that the nef cone is $\text{Nef}(X)=\IR_{\gee 0}[H]+\IR_{\gee 0}[H-E]$. Then the divisor $-K_{X}-tE=(n+1)H-(1+t)E$ is nef if and only if $0\lee t\lee n$. When $0\lee t\lee n$, by the similar computation as in Lemma \ref{lem:CDvolume},
    \begin{eqnarray*}
        \vol_{X}(-K_X-tE)&=&((n+1)H-(1+t)E)^n=(n+1)^n-\sum_{k=2}^n \binom{n}{k}(n+1)^{n-k}(-1-t)^{k}(k-1)\\
        &=& (n-t)^{n-1}(2n+(n-1)t).
    \end{eqnarray*}
    We see the pseudo-effective threshold $T_X(E)$ equals to the nef threshold $n$. Then, 
    \begin{eqnarray*}
        S(-K_X;E)=\frac{1}{(-K_X)^n}\int_0^n\vol_X(-K_X-tE) \dif t = 1+\frac{n-1}{2(n+1)}.
    \end{eqnarray*}
    So $A_X(E)-S(-K_X;E)=1-(1+\frac{n-1}{2(n+1)})=-\frac{n-1}{2(n+1)}<0$. By the valuative criterion (Theorem \ref{thm:beta}), we conclude that $X$ is K-unstable and destabilized by the exceptional divisor $E$.
\end{proof}
\begin{rmk}
    Note that $X=\bl_{\IP^{n-2}}\IP^n$ is a toric Fano variety, so one can also prove lemma \ref{lem-CDunstable} via toric geometry. The minimal generators of the fan $\Sigma\subseteq \IR^n$ is given by $e_1=(1,0\cdots,0),\cdots, e_n=(0,\cdots,0,1), e_0=(-1,-1,\cdots,-1), f=(1,1,0,\cdots,0)$. Here the ray generated by $f$ corresponds to the exceptional divisor $E=\pi^{-1}(\IP^{n-2})$ by the cone-orbit correspondence. Then the moment polytope with respect to $-K_X$ is given by
    \begin{eqnarray*}
        P_X =\{(x_1,\cdots,x_n)\in \IR^n \mid x_1+x_2\geq -1,\,\sum_{i=1}^n x_i\leq 1,\,x_j\geq -1 \,\,\text{for}\,\, j=1,\cdots,n\}.
    \end{eqnarray*}
    By calculus, one can compute the barycenter $\bc(P)=(\frac{n-1}{4(n+1)},\frac{n-1}{4(n+1)}, -\frac{1}{2(n+1)},\cdots, -\frac{1}{2(n+1)})\in \IR^n$, which does not coincide with the origin. Thus, $X$ is K-unstable, so this gives another proof of Lemma \ref{lem-CDunstable}. In fact, by the formula $\delta(X,-K_X)=\min_{\rho\in \Sigma(1)}\frac{1}{\langle \bc(P),u_{\rho}\rangle+1}$ in \cite[Section 7]{BJ20}, we know $\delta(X) = \frac{2n+2}{3n+1}<1$ and it is minimized at $\rho=f=(1,1,0\cdots,0)$. So the exceptional divisor $E$ in Lemma \ref{lem-CDunstable} is indeed the $\delta$-minimizer of $X$.
\end{rmk}

\begin{rmk}
    When $X$ is the blowup of $\IP^n$ along a smooth codimension two subvariety $Y$ of degree $d_Y\geq 2$ in a hyperplane $H$, K. Fujita showed in \cite[Lemma 9.7]{Fuj16} that $X$ is K-unstable and destabilized by the strict transform of $H$ on $X$. Although we do not need this result in our proof of Theorem \ref{thm-main}, we thank the anonymous referee for bringing this fact to our attention.
\end{rmk}
As an immediate application of Theorem \ref{thm-main}, we prove the ODP conjecture (Conjecture \ref{conj:ODP}) for Fano cones over K-semistable Fano manifolds.  
\begin{thm}\label{cor:ODPregular}
    Suppose $L = r^{-1}(-K_X)$ is an ample line bundle over an $n$-dimensional smooth K-semistable  Fano manifold $X$ for some $r\in \IN^*$, then ODP conjecture (Conjecture \ref{conj:ODP}) holds for the affine cone $\CC(X,L)\coloneqq \spec(\oplus_{k=0}^{+\infty}H^0(X,kL))$ with the cone vertex $o$.
\end{thm}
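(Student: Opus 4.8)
The plan is to deduce Theorem \ref{cor:ODPregular} from an exact formula for $\hvol(o,C)$ combined with Theorem \ref{thm-main}, where $C:=\CC(X,L)$. Since $X$ is a smooth Fano manifold and $L$ is proportional to $-K_X$, the cone $C$ is an $(n+1)$-dimensional klt singularity; writing $m:=n+1=\dim C$, the content of the ODP conjecture (Conjecture \ref{conj:ODP}) for $C$ is that $\hvol(o,C)$ should be \emph{either} equal to $m^m=(n+1)^{n+1}$ --- which by \cite{LX19} happens exactly when $C$ is smooth at $o$ --- \emph{or} at most $2(m-1)^m=2n^{n+1}$, with equality only if $C$ is an ordinary double point.

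The first step is to compute $\hvol(o,C)$ using the canonical valuation $v_0$ coming from the natural grading: it is the order of vanishing along the exceptional divisor $X\subset\bl_oC=\mathrm{Tot}_X(L^{-1})$, normalized so that $v_0(s)=k$ for $s\in H^0(X,kL)$. A direct computation gives $\vol_C(v_0)=L^n=(-K_X)^n/r^n$ (from $\CO(C)/\mathfrak{a}_m(v_0)=\bigoplus_{k<m}H^0(X,kL)$) and $A_C(v_0)=r$ (by adjunction along the zero section, using $K_X+L=-(r-1)L$), hence $\hvol_C(v_0)=r^{n+1}L^n=r\cdot(-K_X)^n$. Because $X$ is K-semistable, $v_0$ minimizes the normalized volume over $\Val_{C,o}$ (\cite{Li18,LX19}), so $\hvol(o,C)=r\cdot(-K_X)^n$. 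One checks consistency on $(\IP^n,\CO_{\IP^n}(1))$, $\CC(\IP^1,\CO(2))$ and $\CC(\IP^1\times\IP^1,\CO(1,1))$, which are $\IA^{n+1}$, the $A_1$ surface singularity and the conifold, with $\hvol$ equal to $(n+1)^{n+1}$, $2$ and $16$.

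The second step is an elementary case analysis of $r\cdot(-K_X)^n$. If $(X,L)=(\IP^n,\CO_{\IP^n}(1))$, then $C\cong\IA^{n+1}$ is smooth and $\hvol(o,C)=(n+1)^{n+1}=m^m$. If $X\cong\IP^n$ but $L\cong\CO_{\IP^n}(d)$ with $d\ge 2$, then $d\mid(n+1)$, so $r=(n+1)/d\le(n+1)/2$ and $\hvol(o,C)=r(n+1)^n\le\tfrac{1}{2}(n+1)^{n+1}\le 2n^{n+1}$, the last step because $(1+\tfrac{1}{n})^{n+1}\le 4$ for every $n\ge 1$; equality forces $n=1$, where $C=\CC(\IP^1,\CO(2))$ is the two-dimensional ordinary double point. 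If $X\not\cong\IP^n$, I would first observe that $r\le n$: otherwise $(-K_X)^n=r^nL^n\ge(n+1)^n$, and the Fujita--Liu bound (\cite{Fuj18,Liu18}) would force $X\cong\IP^n$. Then, combining $r\le n$ with Theorem \ref{thm-main}, $\hvol(o,C)=r\cdot(-K_X)^n\le n\cdot 2n^n=2n^{n+1}=2(m-1)^m$, with equality only if $r=n$ and $X\cong\IP^1\times\IP^{n-1}$ or $X\cong Q^n$. Looking at Picard groups excludes $\IP^1\times\IP^{n-1}$ unless $n=2$ (since $-K_X$ has bidegree $(2,n)$, forcing $r\mid\gcd(2,n)$), so in the equality case $X\cong Q^n$, $L\cong\CO_{Q^n}(1)$, $r=n$, and $C=\CC(Q^n,\CO_{Q^n}(1))=\{q=0\}\subset\IA^{n+2}$ is precisely the $(n+1)$-dimensional ordinary double point --- which in turn realizes $\hvol=2(m-1)^m$, so the characterization is sharp.

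I do not expect a genuine obstacle here: the only substantive input is the identity $\hvol(o,C)=r\cdot(-K_X)^n$ for K-semistable $X$, a known consequence of the minimization property of the canonical valuation, and everything else reduces to Theorem \ref{thm-main} plus bookkeeping. The points that require a little care are the numerical inequality $(1+\tfrac{1}{n})^{n+1}\le 4$ and the identification of the equality cases with actual ordinary double points via their Picard lattices.
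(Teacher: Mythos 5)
Your proposal is correct, and the core computation $\hvol(o,C)=A_C(v_0)^{n+1}\vol_C(v_0)=r^{n+1}L^n=r\,(-K_X)^n$ via the minimization property of the canonical valuation is exactly the paper's. The case analysis on $r$, however, follows a genuinely different route. The paper invokes Kobayashi--Ochiai's classification of Fano manifolds by index, $i(X)\le n+1$, with $i(X)=n+1$ iff $X\cong\IP^n$ and $i(X)=n$ iff $X\cong Q^n$; combined with Theorem \ref{thm-main}, the bound $\hvol\le i(X)\cdot(-K_X)^n\le 2n^{n+1}$ and the equality characterization follow at once. You instead derive $r\le n$ for $X\not\cong\IP^n$ directly from Fujita--Liu's volume bound (valid here since $X$ is K-semistable), then in the equality case use Theorem \ref{thm-main} to narrow down to $\IP^1\times\IP^{n-1}$ or $Q^n$, and exclude $\IP^1\times\IP^{n-1}$ by the Fano-index divisibility $r\mid\gcd(2,n)$. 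This avoids the classical Kobayashi--Ochiai theorem and stays entirely within the K-stability toolbox the paper has already built; the cost is the extra Picard-lattice step. As a small bonus, in the case $X\cong\IP^n$, $L=\CO(d)$ with $d\ge 2$, you are more careful than the paper: the bound $\hvol=(n+1)^{n+1}/d\le\tfrac{1}{2}(n+1)^{n+1}\le 2n^{n+1}$ is an equality precisely at $n=1$, $d=2$, where $\CC(\IP^1,\CO(2))$ is the $A_1$ surface singularity, i.e.\ the $2$-dimensional ODP; the paper instead asserts a strict inequality $(n+1)^{n+1}/p<2n^{n+1}$, which fails at $n=1$, $p=2$. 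So the equality case in the ODP conjecture can also arise from a projective-space base, a point your proof accounts for and the paper's does not.
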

\begin{proof}
    We denote $\pi\colon\bl_o \CC \rightarrow \CC$ be the blowup of $\CC$ along the cone vertex $o$. By the result of \cite{Li17,LL19}, we know $X$ is K-semistable if and only if the minimizer of normalized volume of $(\CC,o)$ is attained by the canonical valuation $v=\ord_E$, which is the divisorial valuation associated to the exceptional divisor $E=\pi^{-1}(o)\cong X$. From \cite[Section 3.1]{Kol13}, we know 
    \begin{eqnarray*}
        K_{\bl_o \CC}+(1-r)E\sim_{\IQ}0\sim_{\IQ}\pi^*(K_{\CC}).
    \end{eqnarray*}
    Then $A_{\CC}(\ord_E)=1+\ord_E(K_{\bl_o \CC}-\pi^{*}K_{\CC})=1+(r-1)=r$ and  $\vol(\ord_E)=L^n$. Then 
    $$
    \hvol(\CC,o)=A_{\CC}(\ord_E)^{n+1}\cdot \vol(v)=r^{n+1}\cdot L^n=r(-K_X)^n.
    $$
    Let $i(X)\coloneqq \max\{r\in \IN^*\mid -K_X\sim_{\IQ}rL\,\text{for some ample line bundle }\, L\}$ be the Fano index of $X$. Then the above argument shows that $\hvol(\CC,o)\lee i(X)\cdot (-K_X)^n$. We know that for smooth Fano manifold, the Fano index always satisfies $i(X)\lee n+1$ and $i(X)=n+1$ if and only if $X\cong \IP^n$ and $i(X)=n$ if and only if $X\cong Q\subset \IP^{n+1}$ is a smooth quadric hypersurface (\cite{KO73}). 
    
    If $X\cong \IP^n$ and $r=i(\IP^n)=n+1$, then $(\CC(\IP^n,\CO(1)),o)\cong (\IC^n,0)$ is a smooth point, which is excluded in the ODP conjecture. If $r<i(\IP^n)=n+1$, then there exists some $p\in \IZ_{>1}$ such that $rp=i(\IP^n)=n+1$. Then $\hvol(\CC(\IP^n,L),o)=r(-K_{\IP^n})^n=(n+1)^{n+1}/p<2n^{n+1}$. So we can assume $X\ncong \IP^n$. Then by Theorem \ref{thm-main}, we have $(-K_X)^{n}\lee 2n^n$ and the equality holds if $X\cong \IP^1\times \IP^{n-1}$ or $X$ is a smooth quadric hypersurface $Q$. Combined with Kobayashi-Ochiai's result $i(X)\lee n$, we get 
    \begin{eqnarray*}
        \hvol(\CC,o)\lee i(X)\cdot(-K_X)^n\lee 2n^{n+1}.
    \end{eqnarray*}
    The equality holds if and only if $i(X)=n$ and $(-K_X)^n=2n^n$, which implies $(\CC,o)\cong (\CC(Q,\CO_Q(1)),o)$ is the ODP singularity (the case when $X\cong \IP^1\times \IP^{n-1}$ is excluded since the Fano index $i(\IP^1\times \IP^{n-1})\leq 2$).
\end{proof}

\subsection{Examples and questions}

\begin{ex}\label{ex-quadric}
    Our consideration of weighted blowup is motivated by a special example of a weighted blowup over the quadric hypersurface: 
    $X=Q^n=\{Z_0Z_1+Z_2Z_3+Z_4^2+\cdots+Z_{n+1}^2=0\}\subset \IP^{n+1}$ for $n\ge 3$. 
    Fix a minimal rational curve which is the line $$C\cong \IP^1=\{Z_1=Z_3=Z_4=\cdots=0\}=\{[t:0:s:0:0:\cdots: 0]\colon [t,s]\in \IP^1\}\subset \IP^{n+1}.$$ 
    On the affine chart $U_0=\{Z_0\neq 0\}$, set
    $u_i=z_i /z_0$ with $i\neq 0$. Then the equations of $X$ and $C$ become:
    \begin{eqnarray*}
        X: && u_1+u_2u_3+u_4^2+\cdots+u_{n+1}^2=0\Longrightarrow u_1=-u_2u_3-u_4^2-\cdots-u_{n+2}^2,\\
        C: && u_3=u_4=\cdots=u_{n+1}=0.
    \end{eqnarray*}
   In particular, we choose coordinates $\mathbf{u}=\{u_2, u_3, u_4, \cdots, u_{n+1}\}$ as local coordinates on $X$.   
    Similarly, on the affine chart $U_1=\{Z_2\neq 0\}$, if we set $v_j=z_j/z_2$ with $j\neq 2$. The local coordinate is given by $\Bv=(v_0,v_1,v_4\cdots,v_{n+1})$. The equations of $X$, $C$ 
    are given by:
    \begin{eqnarray*}
        X: && v_0v_1+v_3+v_4^2+\cdots+ v_{n+1}^2=0\Longrightarrow v_3=-v_0v_1-v_4^2-\cdots-v_{n+1}^2,\\
        C: && v_1=v_4=\cdots=v_{n+1}=0. 
    \end{eqnarray*}
    \begin{equation*}
        v_0=\frac{Z_2}{Z_2}=u_2^{-1}, 
        v_1=\frac{u_1}{u_2}=-u_3-\sum_{k=4}^{n+1}\frac{u_k^2}{u_2}, v_k=\frac{Z_k}{Z_2}=\frac{Z_k}{Z_0}\frac{Z_0}{Z_2}=u_k\cdot u_2^{-1}
    \end{equation*}

    The conormal bundle of $C\setminus \{x\}$ is generated by $$\{d v_1=-du_3, d v_4=u_2^{-1} du_4, \cdots, d v_{n+1}=u_2^{-1} du_{n+1}\}$$
    which shows in particular that $N^{\vee}_{C/X}=\CO\oplus\CO(-1)^{\oplus n-2}$. 
   
    Consider the $\IC^*$ action on $\IP^{n+1}$ with weights $(0, 2, 0, 2, \underbrace{1,\dots, 1}_{n-3})$ on the homogeneous coordinates. 
   Note that the fixed point set of this $\IC^*$ always contains two lines: $C$ and $C_1=\{[0:t:0:s:0:\cdots: 0]; [t,s]\in \IP^1\}$. When $n\ge 4$, the fixed point set has another component $Q^{n-4}=\{Z_0=Z_1=Z_2=Z_3=0\}\cap Q^n\hookrightarrow Q^n$. Note that the $\IC^*$ action preserves the quadric hypersurface and gives rise to a divisorial valuation whose center on $X$ is the line $C$. This divisorial valuation can also be obtained via a weighted blowup along the line $C$. Precisely, in the 
    $\mathbf{u}$-coordinates, we perform the weighted blow along $C\cap U_0=\{u_3=u_4=\cdots=u_{n+1}=0\}$
    with weights $(0, 2, 1, \dots, 1)$. In the $\mathbf{v}$ coordinates, we also have the weights $(0, 2, 1,\dots, 1)$. We see that the weighted blowup $\mu: \hat{X}\rightarrow X$ is indeed globally defined and the exceptional divisor $E$ is isomorphis to a weighted projective $\IP(2, \underbrace{1, \dots, 1}_{n-2})$-bundle over $\IP^1$.
    
    We can calculate the volume function $\vol(-K_X-x E)=n^n \vol(H-\xi E)$ where $\xi=\frac{x}{n}$. 
    Corresponding to the geometry of fixed point sets mentioned earlier, the situation turns out to be different for $n=3$ and $n\ge 4$. 
    When $n=3$, the nef threshold (Seshadri constant) of $H$ with respect to $E$ coincides with the pseudo-effective threshold and is equal to 2. We have the expression:
    \begin{eqnarray*}
        \vol(-K_X-xE)=27\cdot \vol(H-\xi E)=27\cdot\left(2-\frac{3}{2}\xi+\frac{1}{2}\xi^2\right), \quad 0\le \xi\le 2.
    \end{eqnarray*}
    When $n\ge 4$, the nef threshold of $H$ with respect to $E$ is 1 while the pseudoeffective threshold of $H$ with respect to $E$ is 2. The volume function consists of two smooth pieces:
    \begin{eqnarray}\label{eq-Qvol}
    \vol(-K_X-x E)&=&n^n\vol(H-\xi E)\\
    &=& n^n\cdot 
    \left\{
    \begin{array}{ll}
     2-\frac{n}{2} \xi^{n-1}+\frac{n-2}{2}\xi^n, & \xi=\frac{x}{n}\in [0, 1]\nonumber \\
    \frac{n}{2}(2-\xi)^{n-1}-\frac{n-2}{2}(2-\xi)^n, &  \xi=\frac{x}{n}\in [1,2]. 
    \end{array}
    \right.
\end{eqnarray}
In fact, this expression holds for any $n\ge 3$, since when $n=3$ the second piece and the first piece connect smoothly and become one whole smooth piece. 
To get the above formula, we can first calculate the volume function before the nef threshold by calculating the intersection:
\begin{eqnarray*}
    (\mu^*H-\xi E)^n&=&\sum_{k=0}^{n}\binom{n}{k}\mu^*H^{n-k}\cdot \xi^k (-E)^{k}\\
    &=&H^n+n\xi^{n-1}\pi^*H\cdot \xi^{n-1} (-E)^{n-1}+\xi^n (-E)^n\\
    &=&2-\frac{n}{2}\xi^{n-1}+\xi^n\frac{n-2}{2}
    \end{eqnarray*}
    where we used a weighted analogue of the usual intersection formula via Segre classes for projective bundles. It is not surprising that the expression of this piece coincides with the piece in the expression of \eqref{eq-phix} when $x=n\xi \le n$.

    The second piece is more difficult to get. One way to get it is to use the fact that the measure $-\frac{1}{n!} d \vol(H-tE)$ is the Duistermaat-Heckman measure of the previously mentioned Hamiltonian $S^1$ action (see \cite{BHJ17}) and then use the symmetry of its density function around $\xi=1$ in our case. We can also directly calculate the Duistermaat-Heckman measure of $S^1$-action by using a well-established localization/Fourier formula (\cite[7.4]{BGV92}) and for the interest of the reader we will provide such a derivation in Appendix B. It is straightforward to verify the estimate $\vol(-K_X-xE)\ge V-\phi(x)$ (see \eqref{eq-phidn}) for this example which is reduced to the inequality: for $n\le x\le 2n$, 
    \begin{eqnarray*}
        F(x):=\frac{n^2}{2}(2n-x)^{n-1}-\frac{n-2}{2}(2n-x)^n+\frac{1}{2} n^n(x+2-n)- 2n^n\ge 0. 
    \end{eqnarray*}
    This can be easily verified by the following facts: $F(n)=F'(n)=0$ and $F''(x)=\frac{n}{2}(n-1)(n-2)(2n-x)^{n-3}(x-n)>0$ for $n<x<2n$.

\end{ex}

\begin{ex} 
    Let $X\subset \IP^{n+1}$ be a Fano hypersurface of degree $1\le b\le n$. Then the minimal rational curve is of splitting type $\CO(1)^{n-b}\oplus \CO^{b-1}$ so that $d=n-b+2$ (see \cite[Exercise V.4.4]{Kol96}). The volume of $X$ is given by $V_d=d^n(n-d+2)=(n+2-b)^n\cdot b$. One can easily verify that $V_d$ is strictly smaller than $\vol(\IP^{d-1}\times \IP^{n-d+1})$ except when $b=1$ or $b=2$.  
\end{ex}

\begin{ques}
   Assume that a K-semistable Fano manifold $X$ contains a minimal rational curve of degree $3\le d\le n-1$ in $X$. Is it true that $\vol(X)=(-K_X)^n\le \vol(\IP^{d-1}\times \IP^{n-d+1})$? 
\end{ques}

In the first version, this was claimed as a result. However due to the issue discussed in Remark \ref{rmk-Zhuang}, the question is left un-answered. 
 On the other hand, our current results imply that the answer is affirmative if we instead assume $d\in \{2,n,n+1 \}$. 

We end this paper by finding the minimal possible anti-canonical volume of $n$-dimensional (K-semistable) Fano manifolds. It is not a well-posed problem for general $\IQ$-Fano varieties. Even assuming K-semistablity, there exists a sequence of K-semistable $\IQ$-Fano varieties with volume $(-K_X)^n$ tends to zero (see \cite[Example 1.4(2)]{Jia17}), so it is in general not possible to determine an optimal positive lower bound. When we restrict to $n$-dimensional smooth Fano manifold $X$, the volume $(-K_X)^n$ is always a positive integer, so a priori $(-K_X)^n\gee 1$. 
\begin{ex}\label{ex:V1even}
    Suppose $n\gee 2$ is an integer. Assume $a_0\lee a_1\lee\cdots \lee a_{n+1}$ are integers and we consider a general degree $d$ hypersurface $X_d$ of weighted projective space $\IP(a_0,a_1\cdots,a_{n+1})$ that is well-formed and quasi-smooth. Note that $X_d$ is a Fano variety if $d<\sum_{i=0}^{n+1}a_i$. By the adjunction formula
    $$
    (-K_X)^n = \frac{d\cdot(\sum_{i=0}^{n+1} a_i -d)^n}{\prod_{i=0}^{n+1}a_i}. 
    $$
    If $a_0=\cdots=a_{n-1}=1,\, a_n =2, \,a_{n+1}=n+1$ and $d= 2n+2$. Namely, $X_d$ is a general degree $2n+2$ hypersurface in $\IP(1^n,2,n+1)$. Then $(-K_{X_{2n+2}})^n = 1$.
    By \cite[Thm 8.1]{IF00}, we know that $X_{2n+2}$ is smooth when $n$ is even. By 
    \cite[Thm 1.3]{ST24}, 
    we know $\delta(X_{2n+2})\gee (n+1)/2> 1$, so $X_{2n+2}$ is a smooth K-stable Fano manifold with minimal anti-canonical volume $(-K_{X_{2n+2}})^n=1$ for all even $n$. When $n$ is odd, the general hypersurface $X_{2n+2}\subset \IP(1^n,2,n+1)$ is singular. (After communicating with K. Fujita, we find that this type of example already appears in \cite[Theorem 5.22]{KSC04}.)
\end{ex}
\begin{ex}\label{ex:V2}
    Let $Y$ be the double cover of $\IP^n$ ramified along a degree $2n$ smooth hypersurface $D$. Then by Hurwitz's formula, $K_Y=\phi^*(K_{\IP^n}+\frac{1}{2}D)$. So $(-K_Y)^n = \deg(\phi)\cdot (-K_{\IP^n}-\frac{1}{2}D)^n=2$. It was first shown in \cite[Theorem 3.2]{AGP06} that $Y$ admits K\"ahler-Einstein metric, hence K-stable (See also \cite{Der16} and \cite[Corollary 4.9(3)]{AZ22} for a purely algebraic proof of K-stability). Here we illustrate a different proof based on the result of \cite{LZ20,Zhu21}, we know K-stability of $Y$ is equivalent to the K-stability of the log Fano pair $(\IP^n,\frac{1}{2}D)$. Since we assume $D$ is a smooth hypersurface, then the log Fano pair $(\IP^n, (\frac{n+1}{2n}-\varepsilon)D)$ is klt, so it is uniformly K-stable by \cite[Theorem 2.10]{ADL21} for $0<\varepsilon\ll1$. Since $D\sim_{\IQ} \frac{2n}{n+1}(-K_{\IP^n})$, by the interpolation of K-stability (\cite[Prop 2.13]{ADL19}), we conclude that $(\IP^n,\frac{1}{2}D)$ is K-stable. Thus, $Y$ is a smooth K-stable Fano manifold with volume $(-K_Y)^n = 2$ for all dimensions $n\gee 2$. Note that $Y$ contains a degree 2 free-immersed rational curve with $n-1$ nodes (see \cite[\rm IV.2.12.3]{Kol96}).
\end{ex}
We indeed have the following:
\begin{prop}
    The minimum of anticanonical volumes of $n$-dimensional Fano manifolds is equal to 1 if $n$ is even and is equal to 2 if $n$ is odd. The minimum volume is obtained by K-stable Fano manifolds. 
\end{prop}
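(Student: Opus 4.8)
The plan is to read off the upper bounds from Examples \ref{ex:V1even} and \ref{ex:V2} and to supply the matching lower bound via a parity constraint on $(-K_X)^n$. First I would record what the examples already give: by Example \ref{ex:V1even}, when $n$ is even the general $X_{2n+2}\subset\IP(1^n,2,n+1)$ is a smooth K-stable Fano $n$-fold with $(-K_X)^n=1$; since $(-K_X)^n$ is always a positive integer, the even case is then complete, the minimum being $1$ and attained by a K-stable manifold. By Example \ref{ex:V2}, for every $n\geq 2$ the double cover $Y\to\IP^n$ branched along a smooth degree-$2n$ hypersurface is a smooth K-stable Fano $n$-fold with $(-K_Y)^n=2$. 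So for odd $n$ the only remaining point is to prove that no smooth Fano $n$-fold has $(-K_X)^n=1$; equivalently, that $(-K_X)^n$ is even whenever $n$ is odd.

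Second, I would establish this parity statement topologically, for any closed almost complex manifold $X$ of odd complex dimension $n$ (Fano manifolds are such). Because $TX$ is a complex bundle, the underlying real bundle has $w_1(TX)=0$ and $w_{2i+1}(TX)=0$, so the Wu classes satisfy $v_1=0$, $v_2=w_2$, and $c_1(X)\equiv w_2(TX)\pmod 2$. By Wu's formula, $\langle v_2 x,[X]\rangle=\langle\mathrm{Sq}^2 x,[X]\rangle$ for $x\in H^{2n-2}(X;\IZ/2)$. Taking $x=w_2^{n-1}$ and using the Cartan formula together with $\mathrm{Sq}^1 w_2=w_1w_2+w_3=0$ and $\mathrm{Sq}^2 w_2=w_2^2$ gives $\mathrm{Sq}^2(w_2^{n-1})=(n-1)\,w_2^n$. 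Hence $\langle w_2^n,[X]\rangle=(n-1)\langle w_2^n,[X]\rangle$ in $\IZ/2$, i.e. $(2-n)\langle w_2^n,[X]\rangle=0$; since $n$ is odd this forces $\langle w_2^n,[X]\rangle=0$, and therefore $(-K_X)^n\equiv\langle w_2^n,[X]\rangle\equiv 0\pmod 2$. (This parity fact for odd-dimensional compact complex manifolds is classical, and could alternatively be cited rather than reproved.)

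Finally I would assemble the pieces: for odd $n$, $(-K_X)^n$ is a positive even integer, hence $\geq 2$, and this bound is attained by the K-stable double cover of Example \ref{ex:V2}; for even $n$ the bound $1$ is attained by $X_{2n+2}$. In both cases the extremal example is K-stable, proving the proposition.

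The hard part is the parity input of the second step; the rest is bookkeeping with the cited examples plus the positivity and integrality of $(-K_X)^n$. The one place to be careful is the Wu-formula computation: one must use that the odd Stiefel--Whitney classes of a complex bundle vanish so that $v_1=0$ and $v_2=w_2=c_1\bmod 2$, and keep track of the binomial coefficient $\binom{n-1}{1}=n-1$ that appears in $\mathrm{Sq}^2(w_2^{n-1})$, since an indexing slip there would break the argument.
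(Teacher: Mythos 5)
Your proof is correct and follows the same route the paper takes: combine Example \ref{ex:V1even} (volume $1$ when $n$ is even), Example \ref{ex:V2} (volume $2$ for all $n$), and the parity of $(-K_X)^n$ for odd $n$. The only difference is that you supply a self-contained Wu-formula proof of the parity statement, whereas the paper simply invokes it as Fact \ref{fact-vol} with a reference to \cite{KSC04}; your computation ($v_1=0$, $v_2=w_2=c_1\bmod 2$, $\mathrm{Sq}^1w_2=0$, $\mathrm{Sq}^2(w_2^{n-1})=(n-1)w_2^n$, hence $(2-n)\langle w_2^n,[X]\rangle=0$) is correct.
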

This follows from Example \ref{ex:V1even}, Example \ref{ex:V2} and following interesting fact pointed out to us by K. Fujita:

\begin{fact}\label{fact-vol}
    (\cite[Solution to Exercise 5.23, Page 217]{KSC04}) Let $X$ be a smooth projective variety of odd dimension $n$. Then the self-intersection number $(-K_X)^n$ is even. 
\end{fact}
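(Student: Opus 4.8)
The plan is to combine Hirzebruch--Riemann--Roch with Serre duality so that $(-K_X)^n$ is exhibited as a manifestly even integer. Write $L=-K_X=\omega_X^{-1}$ and consider the numerical polynomial $P(t):=\chi(X,\CO_X(tL))$. By Snapper's lemma (equivalently, asymptotic Riemann--Roch), $P$ is a polynomial in $t$ of degree exactly $n$ with leading coefficient $L^n/n!=(-K_X)^n/n!$, and $P(t)\in\IZ$ for every $t\in\IZ$. Serre duality gives
\[
\chi(X,\CO_X(tL))=(-1)^n\chi\big(X,\omega_X\otimes\CO_X(-tL)\big)=(-1)^n\chi\big(X,\CO_X(-(1+t)L)\big),
\]
that is, $P(t)=(-1)^nP(-1-t)$; since $n$ is odd this reads $P(t)=-P(-1-t)$, so $P$ is anti-symmetric about the point $t=-\tfrac12$.

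Next I would extract $(-K_X)^n$ as an $n$-th finite difference. Since $P$ has degree $n$ with leading coefficient $(-K_X)^n/n!$, its $n$-th forward difference with unit step is the constant
\[
(-K_X)^n=\sum_{j=0}^{n}(-1)^{n-j}\binom{n}{j}\,P(a+j),
\]
valid for every base point $a$. I would take the symmetric choice $a=-\tfrac{n+1}{2}$, which is an \emph{integer} precisely because $n$ is odd; thus all evaluation points $a_j:=j-\tfrac{n+1}{2}$ $(0\le j\le n)$ are integers and each $P(a_j)\in\IZ$.

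The core of the argument is the pairing $j\leftrightarrow n-j$. One checks directly that $a_{n-j}=-1-a_j$, so the anti-symmetry of $P$ gives $P(a_{n-j})=-P(a_j)$; combined with $\binom{n}{j}=\binom{n}{n-j}$ and $(-1)^{n-j}=-(-1)^{j}$ (again using $n$ odd), the $j$-th and $(n-j)$-th summands add up to $-2(-1)^{j}\binom{n}{j}P(a_j)$, which lies in $2\IZ$. Since $n$ is odd we have $j\neq n-j$ for every $j$, so the $n+1$ summands pair up with no leftover middle term, whence $(-K_X)^n\in 2\IZ$.

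The only genuine subtlety, and exactly where all three appeals to ``$n$ odd'' enter, is this pairing step: one must arrange that the center of anti-symmetry $t=-\tfrac12$ of $P$ coincides with the center of the arithmetic progression $\{a_0,\dots,a_n\}$ (forcing $a_{n-j}=-1-a_j$) and that no summand is fixed by $j\mapsto n-j$. As an alternative I would mention a purely topological proof avoiding Riemann--Roch: reducing mod $2$ and using $c_1(X)\equiv w_2(X)$, the Wu formula gives $\langle w_2^{\,n},[X]\rangle=\langle v_2\cup w_2^{\,n-1},[X]\rangle=\langle \mathrm{Sq}^2(w_2^{\,n-1}),[X]\rangle$ (with $v_2=w_2$ because $w_1(X)=0$), and the Cartan formula together with $\mathrm{Sq}^1 w_2=0$ (as $w_2$ is the mod $2$ reduction of $c_1$) yields $\mathrm{Sq}^2(w_2^{\,n-1})=(n-1)\,w_2^{\,n}$; for $n$ odd, $n-1$ is even, so $\langle w_2^{\,n},[X]\rangle=0$ in $\IZ/2$, i.e.\ $(-K_X)^n=\langle c_1(X)^n,[X]\rangle$ is even.
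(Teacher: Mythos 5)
Your proof is correct. A small clarification on context: the paper does not actually prove Fact~\ref{fact-vol} itself but only cites \cite[Solution to Exercise~5.23]{KSC04}, and the argument recorded there is essentially the one you give: set $P(t)=\chi(X,\CO_X(-tK_X))$, use Serre duality to get the functional equation $P(t)=(-1)^nP(-1-t)$, note that for $n$ odd this makes $P$ anti-symmetric about $t=-\tfrac12$, and then extract $(-K_X)^n$ as an $n$-th finite difference of an integer-valued polynomial over the symmetric integer base points $j-\tfrac{n+1}{2}$. Your pairing computation $(-1)^{n-j}\binom{n}{j}P(a_j)+(-1)^{j}\binom{n}{n-j}P(a_{n-j})=-2(-1)^{j}\binom{n}{j}P(a_j)$ is exactly right, and you correctly flag all three uses of the oddness of $n$: the integrality of the base point $-\tfrac{n+1}{2}$, the sign flip $(-1)^{n-j}=-(-1)^{j}$, and the absence of a fixed middle index $j=n/2$. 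So this is not a gap-free reproduction of a different paper argument---it is the standard argument, correctly executed.

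Your alternative topological proof via the Wu formula is also sound and genuinely different in method: mod $2$ it computes $\langle c_1^n,[X]\rangle\equiv\langle w_2^n,[X]\rangle=\langle \mathrm{Sq}^2(w_2^{n-1}),[X]\rangle=(n-1)\langle w_2^n,[X]\rangle$, using $v_1=w_1=0$, $v_2=w_2$, $\mathrm{Sq}^1w_2=0$ (as $w_2$ is the reduction of the integral class $c_1$), and $\mathrm{Sq}^2w_2=w_2^2$. For $n$ odd this forces $2\langle w_2^n,[X]\rangle\equiv 0\pmod 2$, hence $\langle w_2^n,[X]\rangle\equiv 0$. What the two approaches buy is slightly different: the Riemann--Roch argument stays entirely inside algebraic geometry and shows the stronger fact that the whole Hilbert polynomial of $-K_X$ is odd about $t=-\tfrac12$; the Wu-formula argument shows that the conclusion is really a statement about the underlying almost complex (indeed just $\mathrm{Spin}^c$) manifold and needs no projectivity. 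Both are correct.
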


\begin{ques}
   Classify all Fano manifolds that obtain the minimal volume.     
\end{ques}

\appendix
\section{Proof of the estimate \eqref{eq-goal}}

The following graphs  illustrate the result in Proposition \ref{prop-3&n-1}. 
\begin{figure}[htbp]
  \centering
  \begin{subfigure}[b]{0.32\textwidth}
    \includegraphics[width=\textwidth]{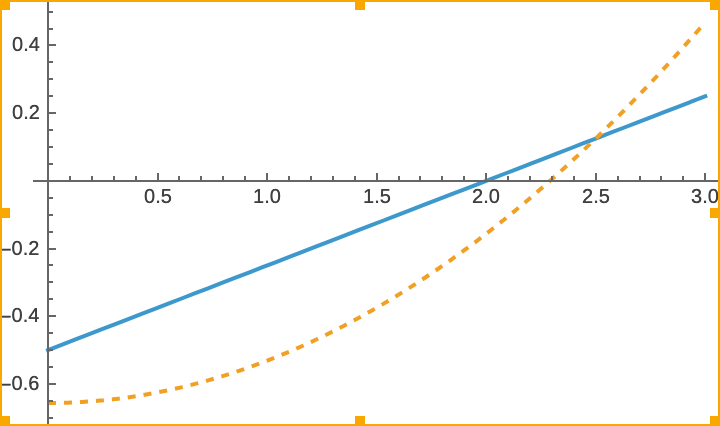}
    \caption{d=7}
  \end{subfigure}
  \hfill
  \begin{subfigure}[b]{0.32\textwidth}
    \includegraphics[width=\textwidth]{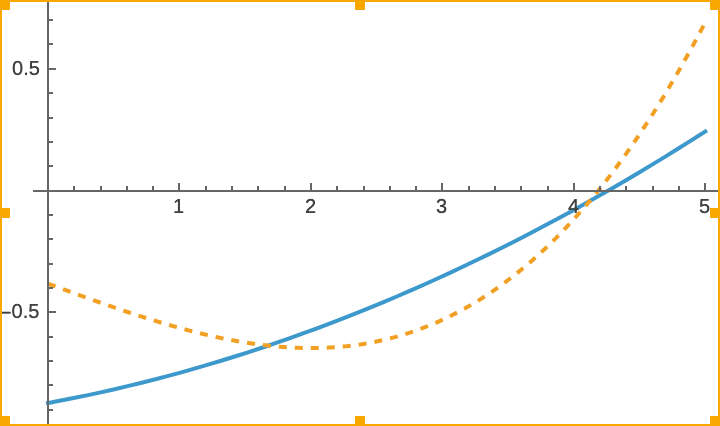}
    \caption{d=6}
  \end{subfigure}
  \hfill
  \begin{subfigure}[b]{0.32\textwidth}
    \includegraphics[width=\textwidth]{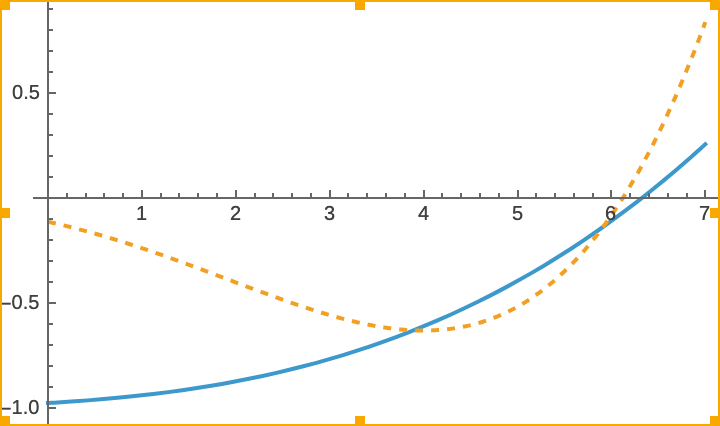}
    \caption{d=5}
  \end{subfigure}

  \vspace{1em} 

  \begin{subfigure}[b]{0.32\textwidth}
    \includegraphics[width=\textwidth]{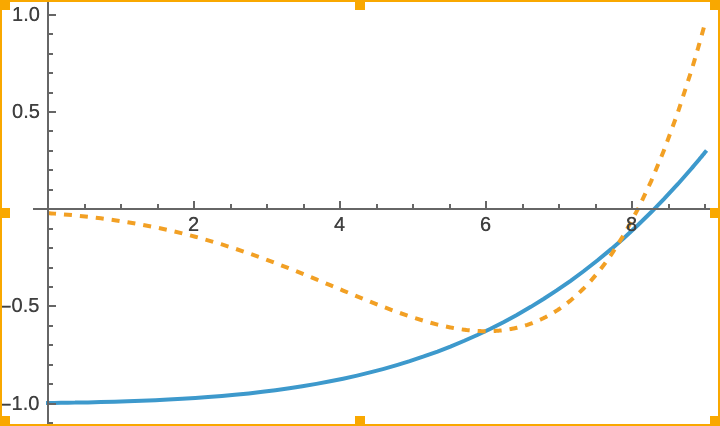}
    \caption{d=4}
  \end{subfigure}
  \hfill
  \begin{subfigure}[b]{0.32\textwidth}
    \includegraphics[width=\textwidth]{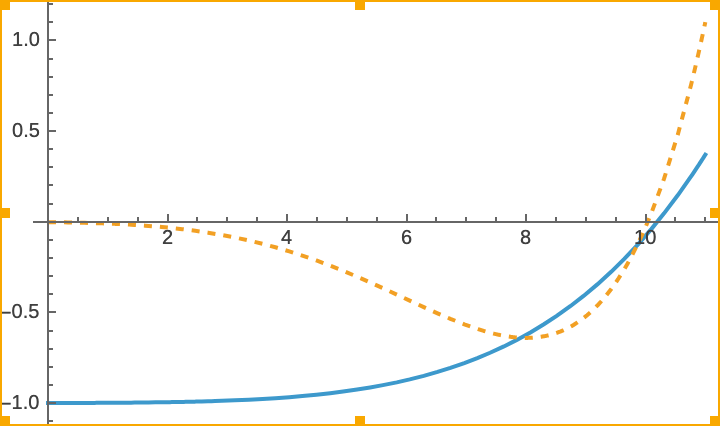}
    \caption{d=3}
  \end{subfigure}
  \hfill
  \begin{subfigure}[b]{0.32\textwidth}
    \includegraphics[width=\textwidth]{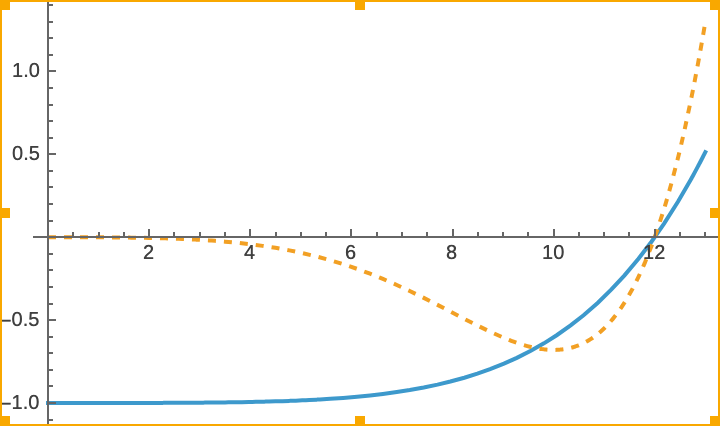}
    \caption{d=2}
  \end{subfigure}

  \caption{$n=7, 7\ge d\ge 2$}\label{fig-n7}
\end{figure}

 The solid curve (resp. the dashed curve) is the graph of $f(y):=\phi(y)/(2n^n)-1$ (resp. $g(y):=\Psi(y)/(2n^n)$) where $y=x-d$.  The first graph illustrates Example \ref{ex-d=n}. The second figure illustrates Proposition \ref{prop-n-1}. The 3rd to 5th figures illustrates that for $d\in \{5,4,3\}$ at $y_1=2(n-d+1)=x_1-d\in \{6,8,10\}$, $g'(y_1)>f'(y_1)$ and $g(y_1)>f(y_1)$ as claimed in the proof of Proposition \ref{prop-3&n-1}. The last figure illustrates Remark \ref{rmk-2nn}. The graphs are generated by the following codes.

\begin{lstlisting}
L=2;n=7;
phi[y_,n_,d_]:=L^(-n+d-1)*Sum[Binomial[n,j]*(n-d+2-j)*d^(n-j)*y^j,{j,0,n-d+1}];
Phi[y,n,d]:=L^(-n+d-1)*Sum[Binomial[n+1,j]*(n-d+3-j)/(n+1)*d^(n+1-j)*y^j,{j,0,n-d+2}];
A[n_,d_]:=d-2+L*(n-d+1);
Psi[y_,n_,d_]:=(y+d-A[n,d])*phi[y,n,d]-Phi[y_,n_,d_];
For[d=n,d>1,d--,Plot[{phi[y,n,d]/(2n^n)-1,Psi[y,n,d]/(2n^n)},{y,0,L*(n+1-d)+1}, PlotStyle->{Thick, Dashed}]//Print]

\end{lstlisting}

\begin{proof}[Proof of the estimate \eqref{eq-goal}]\label{proof-goal}
For the simplicity of notation, we set $r=n+1-d$ so that $r+d=n+1$. 
We will prove that \eqref{eq-goal} holds for $x_1=2r+d=2n-d+2$. 

\textbf{(Case I: Both $r$ and $d$ are big)}
By using the expression \eqref{eq-phix}-\eqref{eq-Phix}, we can calculate:
  \begin{eqnarray}
  && S=S(r,d)=\Phi(x_1)-\phi(x_1)\nonumber\\
  &=&\frac{2^{-(n-d+1)}n!}{(d-3)!(n-d+2)!}\int_0^d z^{d-3}(d-z)(x_1-z)^{n-d+1}(n-z)\dif z\label{eq-Psiint}\\
   &=& \frac{1}{2^r(r+1)}\sum_{j=0}^r (r+1-j)\cdot\left[ n(n-j+1)-d(d-2)\right] \frac{n!}{j!(n+1-j)!} d^{n-j}(2r)^j \label{eq-third}\\
   &=& \frac{(d+2r)^{n+1}\cdot n}{2^r(r+1)(n+1)d}\sum_{j=0}^r (r+1-j)(r-j+\Delta)\binom{n+1}{j}(1-p)^{n+1-j}p^j \label{eq-Psipro}
\end{eqnarray}
where 
\begin{equation*}
\Delta=\frac{d(r+1)}{n}, \quad 
p=\frac{2r}{d+2r}, \quad 1-p=\frac{d}{d+2r}.
\end{equation*}
For the identity (\ref{eq-third}) we use the binomial expansion of $(x_1-z)^{r}=(2r+(d-z))^r$ and integrate via the definition of Beta functions. 
To estimate the partial binomial sum, we use a method from probability theory called Cr\'{a}mer's tilt. 
Set 
$$q=\frac{r}{n+1}=\frac{r}{r+d}<p=\frac{2r}{2r+d}$$
such that the new expectation of the binomial distribution is $r=(n+1)q$. 
We can calculate: 
\begin{eqnarray*}
    \frac{(1-p)^{n+1-j}p^j}{(1-q)^{n+1-j}q^j}&=&\left(\frac{p(1-q)}{q(1-p)}\right)^{j-(n+1)q}\left(\frac{1-p}{1-q}\left(\frac{p(1-q)}{q(1-p)}\right)^q \right)^{n+1}\\
    &=&\eta^{r-j}\left(\frac{(1-p)^{1-q}p^q}{(1-q)^{1-q}q^q}\right)^{n+1}
\end{eqnarray*}
where 
\begin{eqnarray*}
    \eta=\frac{q(1-p)}{p(1-q)}=\frac{rd}{2r d}=\frac{1}{2}, \quad \frac{(1-p)^{1-q}p^q}{(1-q)^{1-q}q^q}
=2^q\frac{r+d}{2r+d}.
    \end{eqnarray*}
Substituting into the formula \eqref{eq-Psipro} and let $j=r-k$, 
we get
\begin{eqnarray}
    S &=& \frac{(n+1)^n n}{(r+1)d}\sum_{k=0}^{r}(k+1)(k+\Delta)\frac{}{}\binom{n+1}{r-k}(1-q)^{n+1-r+k} q^{r-k} 2^{-k}  \label{eq-Ssum}\\
    &=&\frac{(n+1)^n n}{(r+1)d}\sum_{k=0}^r (k+1)(k+\Delta)2^{-k} a_k 
\end{eqnarray}
where $a_k=\binom{n+1}{r-k}(1-q)^{n+1-r+k}q^{r-k}$. 
Note that for $k\geq 1$,
\begin{eqnarray*}
    \frac{a_{k}}{a_{k-1}}=\frac{r-k+1}{n-r+k+1}\cdot\frac{d}{r}\le \frac{d}{d+k}\le \frac{d}{d+1}<1.
\end{eqnarray*}
So if we set $\theta_1=\frac{d}{2(d+3)}<\frac{1}{2}$ and $\beta\coloneqq \frac{n+1}{dr}\geq \frac{n}{d(r+1)}=\Delta^{-1}$, we get the estimate:
\begin{eqnarray*}
    \frac{S}{2n^n} &\leq& \frac{(n+1)^n}{2n^n}\Delta^{-1}\sum_{k= 0}^{+\infty} (k+1)(k+\Delta)2^{-k} \cdot a_k\\
    &=&\frac{(n+1)^n}{2n^n}\Delta^{-1}a_0\left(\Delta+(1+\Delta)\frac{a_1}{a_0}+\frac{a_2}{a_0}\sum_{j=0}^{+\infty}(j+3)(j+2+\Delta)2^{-j-2}\frac{a_{2+j}}{a_2} \right)\\
    &\leq&\frac{(n+1)^n}{2n^n}\Delta^{-1}a_0 \left(\Delta+(1+\Delta)\frac{d}{d+1}+\frac{d^2}{4(d+1)(d+2)}\sum_{j=0}^{+\infty}(j+3)(j+2+\Delta)2^{-j}\left(\frac{d}{d+3}\right)^j \right)\\
    &=& \frac{(n+1)^n}{2n^n} \Delta^{-1}a_0 \left(\alpha_1\Delta+\tau_1\right)
\end{eqnarray*}
where $\alpha_1=1+\frac{d}{d+1}+\frac{d^2}{4(d+1)(d+2)}\frac{3-2\theta_1}{(1-\theta_1)^2}$ and $\tau_1=\frac{d}{d+1}+\frac{d^2}{2(d+1)(d+2)}\frac{\theta_1^2-3\theta_1+3}{(1-\theta_1)^3}$.
By using Robbins' version of the Stirling approximation:
\begin{eqnarray*}
    e^{\frac{1}{12n+1}}<\frac{n!}{n^n e^{-n}\sqrt{2\pi n}}< e^{\frac{1}{12 n}},
\end{eqnarray*}
one can verify that: with $r=(n+1)q$,  
\begin{eqnarray*}
    a_0&=&\binom{n+1}{r}(1-q)^{n+1-r}q^r\le 
    \frac{1}{\sqrt{2\pi (n+1)q (1-q)}}\\   
    &=&
    \sqrt{\frac{(n+1)}{(2\pi) rd}}=\sqrt{\frac{\beta}{2\pi}}. 
\end{eqnarray*}
Thus, when $d\leq r$, we have $\beta=d^{-1}+r^{-1}\leq 2d^{-1}$, then
\begin{eqnarray*}
     \frac{S}{2n^n}
     \leq  \frac{e}{2\sqrt{2\pi}}\sqrt{\beta}
     \left(\alpha_1+\tau_1\beta\right)\leq \frac{e}{2\sqrt{2\pi}}\sqrt{(2/d)}\left(\alpha_1+\tau_1\cdot (2/d)\right)\leq 1
\end{eqnarray*}
provided if $d\geq 9$. 

When $r\leq d$, similarly
\begin{eqnarray*}
\frac{S}{2n^n}&\leq& \frac{(n+1)^n}{2n^n}\Delta^{-1}a_0\left(\Delta+(1+\Delta)\frac{a_1}{a_0}+\frac{a_2}{a_0}\sum_{j=0}^{+\infty}(j+3)(j+2+\Delta)2^{-j-2}\frac{a_{2+j}}{a_2} \right)\\
   &\leq& \frac{(n+1)^n}{2n^n}\Delta^{-1}a_0\left(\Delta+(1+\Delta) +\frac{r-1}{r}\sum_{j=0}^{+\infty}(j+3)(j+2+\Delta)2^{-j-2}\left(\frac{r-2}{r} \right)^j\right) \\
     &=&\frac{(n+1)^n}{2n^n}\Delta^{-1}a_0\left(\alpha_2 \Delta+\tau_2\right)
\end{eqnarray*}
where $\theta_2=\theta_2(r)=\frac{r-2}{2r}$ and $\alpha_2=2+\frac{r-1}{4r}\frac{3-2\theta_2}{(1-\theta_2)^2},\, \tau_2=1+\frac{r-1}{2r}\frac{\theta_2^2-3\theta_2+3}{(1-\theta_2)^3}$. 
Use $\Delta^{-1}\le \frac{r}{r+1}\beta$ with $\beta=d^{-1}+r^{-1}\le 2r^{-1}$. We see the inequality
\begin{eqnarray*}
    \frac{S}{2n^n}\leq\frac{e}{2\sqrt{2\pi}}\left(\alpha_2 +\frac{r}{r+1}\tau_2 \cdot (2/r) \right)\sqrt{2/r}\leq1
\end{eqnarray*}
 holds provided if $r\ge 11$.

\textbf{(Case II: $r$ small and $d$ large)}
    Recall the relation $n+1=r+d$. 
To illustrate our way to estimate, consider the case $r=n-d+1=3$ and $d\ge 3$ so that $n\ge 5$. 
Then we can calculate and estimate: 
\begin{eqnarray*}
    \frac{\Psi(x_1)-\phi(x_1)}{2n^n}&=& \frac{(n-2)^{n-2}}{16n^n(1+n)}(115n^3-126n^2-66n+40)\\
    &\le& \frac{e^{-2}}{16}\frac{115 n^3-126 n^2-66n+40}{(n-2)^2(1+n)}\\
    &=& \frac{115 e^{-2}}{16}\left(1+\frac{1}{115}\frac{{219} n^2-{66}n-{420}}{(n-2)^2(n+1)}\right)\\
    &=& \frac{115 e^{-2}}{16} \left(1+\frac{1}{115}\frac{m}{m+3}(219 m^{-1}+810 m^{-2}+324 m^{-3})\right)
\end{eqnarray*}
where for the first inequality we used $(n-2)^n/n^n\le e^{-2}$ and for the last equality we set  $m=n-2$. 
One can verify numerically that the last expression is less than 1 when $m=n-2\ge 69$. So we can just verify numerically for $6\le n\le 70$ or equivalently $3\le d\le 68$.

For $r\ge 4$, we use the formula \eqref{eq-Ssum} for $S=S(r,d)$ to estimate (note that $\Delta=\frac{d(r+1)}{n}\le r+1$):  
\begin{eqnarray*}
    &&\frac{S}{2n^n}
    =\frac{(n+1)^n n}{2 n^n (r+1)d}\sum_{k=0}^{r}(k+1)(k+\Delta)\frac{}{}\binom{n+1}{r-k}(1-q)^{n+1-r+k} q^{r-k} 2^{-k}    
    \\
    &\le &\frac{(n+1)^n}{2n^n}\frac{1}{r+1}\left(1+\frac{r-1}{d}\right)\sum_{k=0}^r \frac{(k+1)(k+r+1)(n+1)!}{(r-k)!(n+1-(r-k))!}(\frac{d}{n+1})^{d+k}(\frac{r}{n+1})^{r-k}2^{-k}\\
    &\le& \frac{e}{2}\frac{1}{r+1}(1+\epsilon)\sum_{k=0}^r(k+1)(k+r+1)\frac{1}{(r-k)!}r^{r-k}2^{-k}(1+\frac{r}{d})^{-d}
\end{eqnarray*}
where we set $\epsilon=r/d$. Moreover, we have $1+\epsilon< e^\epsilon$ and
\begin{eqnarray*}
    -d\cdot \log (1+\epsilon)=-d (\epsilon-\frac{\epsilon^2}{2}+\cdots)\le -d\epsilon+d\frac{\epsilon^2}{2}=-r+\frac{r\epsilon}{2}.
\end{eqnarray*}
Then    \begin{eqnarray*}
     \frac{S}{2n^n} &\le& \frac{e}{2(r+1)}(1+\epsilon)\sum_{k=0}^r \frac{(k+1)(k+r+1)r^{r-k}}{2^k(r-k)!}e^{-r+r\epsilon/2}.
    \end{eqnarray*}
We define
\begin{equation}
    R_\infty=R_\infty(r)=\frac{e^{1-r}}{2(r+1)}\sum_{k=0}^r \frac{(k+1)(k+r+1)r^{r-k}}{2^k(r-k)!}=\lim_{d\rightarrow+\infty} \frac{S}{2n^n}.
\end{equation} 
We have $S/2n^n< R_{\infty} e^{r(r+2)/(2d)}$.
So $S< 2n^n$ when $d\ge d(r)=\frac{r(r+2)}{2(-\log R_\infty)}$.
By \textbf{(Case I)}, we can assume that $4\le r\le 10$. 
By the numerical analysis (we provide the following Wolfram Mathematica code), we can determine the choice of $d(r)$ for each $4\leq r \leq 10$. 
Moreover, by the beginning discussion of \textbf{(Case II)}, we can assume $d(3)=68$. We list the results in Table \ref{tab:d(r)}. 

\begin{lstlisting}
R = (E^(1 - r)/(2*(r + 1)))*Sum[((k + 1)*(k + r + 1)*r^(r - k))/
         (2^k*(r - k)!), {k, 0, r}]
Table[N[(r*(r + 2))/(2*(-Log[R]))], {r, 4, 10}]
\end{lstlisting}

\begin{table}[!htp]
    \centering
    \caption{$d(r)$}
    \label{tab:d(r)}
   \begin{tabular} {|c|c|c|c|c|c|c|c|c|c|c|c|c|c|} 
\hline
   $r$ & 3
   & 4 & 5& 6& 7 & 8& 9& 10 
   \\
   \hline
   $d(r)$ & 68
   & 101 & 89 & 91 & 97 
   & 106 & 117 & 128 
   \\ 
   \hline
\end{tabular}
\end{table}

(\textbf{Case III: $d$ small and $r$ large})
We use the integral formula \eqref{eq-Psiint} to estimate.
We use 
\begin{eqnarray*}
2^{-(n-d+1)}(x_1-z)^{n-d+1}=2^{-r}(2r+d-z)^r=r^r (1+\frac{d-z}{2r})^r\le r^r e^{(d-z)/2}
\end{eqnarray*}
and $n-z\le n$ to get:
\begin{eqnarray*}
    \frac{S}{2n^n}&=&\frac{2^{-(n-d+1)}n!}{2n^n (d-3)!(n-d+2)!}\int_0^d z^{d-3}(d-z)(x_1-z)^{n-d+1}(n-z)\dif z\\
    &\le& \frac{n(n-1)\cdots (n-d+3)}{(d-3)!}\cdot \frac{r^r\cdot n}{2n^n}\int_0^d z^{d-3}(d-z) e^{(d-z)/2}\dif z\\
    &=& \frac{n(n-1)\cdots (n-d+3)}{(d-3)!n^{d-2}}\frac{(n-d+1)^r}{2 n^r}\int_0^d z^{d-3}(d-z) e^{(d-z)/2}\dif z\\
    &\le& (1+\frac{d-1}{r})^{-r}\frac{1}{2(d-3)!}\int_0^d z^{d-3}(d-z) e^{(d-z)/2}\dif z
    \end{eqnarray*}
    where $\epsilon=\frac{d-1}{r}$ and 
    \begin{eqnarray*}
    -r \log(1+\epsilon)=-r \sum_{k=0}^{+\infty} (\epsilon-\frac{\epsilon^2}{2}+\frac{\epsilon^3}{3}+\cdots)\le -r\epsilon+r \epsilon^2/2.
\end{eqnarray*}
Then
 \begin{eqnarray*}
    \frac{S}{2n^n}&\le& e^{r \epsilon^2/2}e^{1-d/2}\frac{1}{2(d-3)!}\int_0^d z^{d-3}(d-z)e^{-z/2}\dif z
\end{eqnarray*}
We define
\begin{eqnarray*}
    R_\infty(d)&=&e^{1-\frac{d}{2}}\frac{1}{2(d-3)!}\int_0^d z^{d-3}(d-z)e^{-z/2}\dif z=\mathop{\lim}_{r\rightarrow+\infty} \frac{S}{2n^n}. 
\end{eqnarray*}
Therefore, we have $S/2n^n< e^{\frac{(d-1)^2}{2r}}R_\infty(d)$. So for $S< 2n^n$, it suffices $r\ge r(d):=\frac{(d-1)^2}{2(-\log R_\infty(d))}$. By \textbf{Case I}, we can assume $3\le d\le 8$ and the corresponding $r(d)$ can be solved numerically (see the following Wolfram Mathematica code): 
\begin{lstlisting}
R = E^(1 - d/2)*(1/(2*(d - 3)!))*
     Integrate[(z^(d - 3)*(d - z))/E^(z/2), {z, 0, d}]
Table[N[(d - 1)^2/(2*(-Log[R]))], {d, 3, 8}]
\end{lstlisting}
\begin{table}[!htp]
\caption{$r(d)$}
    \label{tab:r(d)}
    \centering
\begin{tabular}{|c|c|c|c|c|c|c|c|c|c|}
\hline
   $d$ & 3 & 4 & 5& 6& 7 & 8\\
   \hline
   $r(d)$ & 16 & 20 & 27 & 34 & 43 & 52\\
   \hline
\end{tabular}    
\end{table}

(\textbf{Case IV: Both $d$ and $r=n+1-d$ are small}).
We can numerically check (see the following Wolfram Mathematica code) that the estimate (\ref{eq-goal}) holds for the following finitely many cases:
\begin{itemize}
    \item $3\le d\le 8$ and $3\le r\le r(d)-1$. 
\item
$4\le r\le 10$ and $3\le d\le d(r)-1$. 
\end{itemize}
Both $r(d)$ and $d(r)$ are given by Table \ref{tab:d(r)} and \ref{tab:r(d)}.
\begin{lstlisting}
x1 = 2 r + d
n = r + d - 1
S = (n!/(2^r*(d - 3)!*(n - d + 2)!))*
  Integrate[z^(d - 3)*(d - z)*(x1 - z)^r*(n - z), {z, 0, d}]
Do[Print[S/(2*n^n) < 1], {d, 3, 8}, {r, 3, 52}]
Do[Print[S/(2*n^n) < 1], {r, 3, 10}, {d, 3, 128}]
\end{lstlisting}
\end{proof}

\section{A proof of the formula \eqref{eq-Qvol}}

We calculate the Duistermaat-Heckman measure via the localization method. 
Let $\omega=\omega_{\mathrm{FS}}$ denote the restriction of the Fubini-Study metric of $\IP^{n+1}$ on $X=Q^n=\{Z_0Z_1+Z_2Z_3+Z_4^2+\cdots+Z_{n+1}^2=0\}\subset \IP^{n+1}$ for $n\ge 3$. The $\IC^*$ action on $X$ is given by: for any $\lambda\in \IC^*$, 
\begin{equation*}
    \lambda\circ (Z_0, Z_1, Z_2, Z_3, Z_4, \dots, Z_{n+1})=(Z_0, \lambda^2 Z_1, Z_2, \lambda^2 Z_3, \lambda Z_4, \dots, \lambda Z_{n+1}).
\end{equation*}
The corresponding $S^1$ action is a Hamiltonian with moment map given by:
\begin{equation*}
    \mu([Z])=\frac{2|Z_1|^2+2|Z_3|^2+|Z_4|^2+\cdots+|Z_{n+1}|^2}{|Z_0|^2+|Z_1|^2+\cdots +|Z_{n+1}|^2}.
\end{equation*}
We introduce the equivariant volume functional:
\begin{equation*}
    V(\theta)=\int_Xe^{i\mu(z) \xi}\frac{\omega^n}{n!}=\int_\IR e^{i\theta \xi}\, \mathrm{DH}(\xi)
\end{equation*}
where $\mathrm{DH}(\xi)=\mu_* \frac{\omega^n}{n!}$ is the Duistermaat-Heckman measure. As a consequence, the density function of $\mathrm{DH}(\xi)$ is the Fourier transform of $V(\theta)$. 
    
The localization formula calculates the integral $V(\theta)$ via data on the fixed points sets (including the values of moment map and equivariant Euler curvature forms of normal bundles) which consists of three components in this case:
\begin{eqnarray*}
    && F_0=\mu^{-1}(0)\cong \IP^1: [*, 0, *, 0, \dots, 0],\\
    && F_1=\mu^{-1}(2)\cong \IP^1: [0, *, 0, *, 0, \dots, 0],\\
    && F_2=\mu^{-1}(1)\cong Q^{n-4}: [0, 0, 0, 0, Z_4, \dots, Z_{n+1}] \text{ with } Z_4^2+\cdots Z_{n+1}^2=0.  
\end{eqnarray*}
The contribution from $F_0$ is given by:
\begin{eqnarray*}   V_0(\theta):=\int_{\IP^1}\frac{e^{\omega}}{((\omega-i\theta))^{n-2}(-i 2\theta)}=\frac{(-1)^{n+1}2\pi}{2}((i\theta)^{1-n}+(n-2)(i\theta)^{-n}). 
\end{eqnarray*}
We have the following useful formula: for any $a\in \IR$, 
\begin{eqnarray*}
    \frac{1}{2\pi}\int_\IR\frac{e^{i a \theta\xi}}{(i\theta)^k}=\frac{(a-\xi)^{k-1}}{2(k-1)!}
\mathrm{Sign}(a-\xi). \end{eqnarray*}
So we can calculate the Fourier transform of $V_0(\theta)$:
\begin{eqnarray*}
    \frac{1}{2\pi}\int_{\IR} V_0(\theta) e^{-i\theta \xi}\dif\theta&=&\frac{1}{4(n-1)!}((n-2)\xi^{n-1}-(n-1)\xi^{n-2})\mathrm{Sign}(-\xi)\\
    &=:& \frac{f(\xi)}{2} \mathrm{Sign}(-\xi).
\end{eqnarray*}
The contribution from $F_2$ is given by:
\begin{eqnarray*}
V_2(\theta):=\int_{\IP^1}\frac{e^{(\omega+2i\theta)}}{((\omega+i\theta))^{n-2}(i2\theta)}=\frac{2\pi }{2}e^{2i\theta}((i\theta)^{1-n}+(2-n)(i\theta)^{-n}).
\end{eqnarray*}
Similar to above, the Fourier transform of $V_2(\theta)$ is equal to:
\begin{eqnarray*}
    \frac{1}{2\pi}\int_{\IR} V_2(\theta) e^{-i\theta \xi}\dif\theta&=&\frac{1}{4(n-1)!}(-(n-2)(2-\xi)^{n-1}+(n-1)(2-\xi)^{n-2})\mathrm{Sign}(2-\xi)\\
    &=:& \frac{g(\xi)}{2} \mathrm{Sign}(2-\xi). 
\end{eqnarray*}
The contribution from $F_1$ is given by:
\begin{eqnarray*}    V_1(\theta)&:=&\int_{Q^{n-4}}\frac{e^{(\omega+i\theta)}}{(\omega+i\theta)^2(\omega-i\theta)^2}=\int_{Q^{n-4}}\frac{e^\omega e^{i\theta}}{(\omega^2+\theta^2)^2}\\
&=&\sum_{k=0}^{\lfloor\frac{n-4}{2}\rfloor}\frac{(k+1)}{(n-4-2k)!}\frac{e^{i\theta}}{(i\theta)^{2k+4}}.
\end{eqnarray*}
The Fourier transform of $V_1(\theta)$ is equal to:
\begin{eqnarray*}
    \frac{1}{2\pi}\int_{\IR} V_1(\theta) e^{-i\theta \xi}\dif\theta&=&\sum_{k=0}^{\lfloor \frac{n-4}{2}\rfloor}\frac{(k+1)}{2(n-4-2k)!(2k+3)!}(1-\xi)^{2k+3} \mathrm{Sign}(1-\xi). 
\end{eqnarray*}
We claim that this is equal to $-\frac{f(\xi)+g(\xi)}{2}\mathrm{Sign}(1-\xi)$ so that the Fourier transform is $V(\theta)$ is equal to:
\begin{eqnarray*}
    \rho(\xi)&=&\frac{f(\xi)}{2}\mathrm{Sign}(-\xi)+\frac{g(\xi)}{2}\mathrm{Sign}(2-\xi)-\frac{f(\xi)+g(\xi)}{2}\mathrm{Sign}(1-\xi)\\
    &=& 
    \left\{\begin{array}{ll}
    -f(\xi) & 0\le \xi\le 1\\
    g(\xi) & 1\le \xi\le 2\\
    0 & \mathrm{otherwise}.
    \end{array}\right.
\end{eqnarray*}
It is immediate to check that $\rho(\xi)=-\frac{1}{n!}\frac{d}{d\xi} \vol(H-\xi E)$ by using the expression from \eqref{eq-Qvol}, which verifies the formula \eqref{eq-Qvol} after integration. 
To verify the claim, we calculate:
\begin{eqnarray*}
    \xi^{n-1}-(2-\xi)^{n-1}&=&(1-(1-\xi))^{n-1}-(1+(1-\xi))^{n-1}=\sum_{k=0}^{n-1}\binom{n-1}{k}((-1)^{k}-1)(1-\xi)^k\\
    &=&-\sum_{j=0}^{\lfloor \frac{n-2}{2}\rfloor} \binom{n-1}{2j+1} (1-\xi)^{2j+1},
\end{eqnarray*}
and finally verify the claimed identity:
\begin{eqnarray*}
    -2 (n-1)!(f(\xi)+g(\xi))&=& (n-2)(\xi^{n-1}-(2-\xi)^{n-1})-(n-1)(\xi^{n-2}-(2-\xi)^{n-2}),\\
    &=& \sum_{j=0}^{\lfloor \frac{n-2}{2}\rfloor} (n-2)\binom{n-1}{2j+1}(1-\xi)^{2j+1}-\sum_{j=0}^{\lfloor \frac{n-3}{2}\rfloor}(n-1)\binom{n-2}{2j+1}(1-\xi)^{2j+1}\\
    &=&\sum_{j=0}^{\lfloor \frac{n-2}{2}\rfloor}\frac{(n-1)!2j}{(2j+1)!(n-2-2j)!}(1-\xi)^{2j+1}\\
    &=&2(n-1)! \sum_{k=0}^{\lfloor \frac{n-4}{2}\rfloor}\frac{k+1}{(n-4-2k)!(2k+3)!}(1-\xi)^{2k+3}. 
    \end{eqnarray*}


\bibliographystyle{alpha}
\bibliography{ref}

\end{document}